\title{Notes on model structures on preorders}
\date{December 2025}
\author{Andrew Salch}
\author{Gunjeet Singh}
\theoremstyle{plain}
\newtheorem{prop}{Proposition}[section]
\newtheorem{theorem}[prop]{Theorem}
\newtheorem{corollary}[prop]{Corollary}
\newtheorem{lemma}[prop]{Lemma}
\newtheorem{definition}[prop]{Definition}
\newtheorem{definition-proposition}[prop]{Definition-Proposition}
\newtheorem{definition-theorem}[prop]{Definition-Theorem}
\newtheorem{notation}[prop]{Notation}
\newcounter{lettered}
\newtheorem{letteredtheorem}[lettered]{Theorem}
\newtheorem{letteredcorollary}[lettered]{Corollary}
\theoremstyle{definition}
\newtheorem{remark}[prop]{Remark}
\newtheorem{question}[prop]{Question}
\newtheorem{observation}[prop]{Observation}
\DeclareMathOperator{\cof}{{\rm cof}}
\DeclareMathOperator{\WE}{{\rm we}}
\DeclareMathOperator{\fib}{{\rm fib}}
\DeclareMathOperator{\possible}{{\diamond}}
\DeclareMathOperator{\Rels}{{\rm Rels}}
\DeclareMathOperator{\op}{{\rm op}}
\DeclareMathOperator{\Top}{{\rm Top}}
\DeclareMathOperator{\Inj}{{\rm Inj}}
\DeclareMathOperator{\Proj}{{\rm Proj}}
\DeclareMathOperator{\Moore}{{\rm Moore}}
\DeclareMathOperator{\Int}{{\rm Int}}
\DeclareMathOperator{\Cl}{{\rm Cl}}
\DeclareMathOperator{\Mor}{{\rm Mor}}
\DeclareMathOperator{\Fib}{{\rm Fib}}
\DeclareMathOperator{\Cofib}{{\rm Cof}}
\begin{document}
\begin{abstract} 
Given subsets $\mathcal{C},\mathcal{F}$ of a preorder $\mathcal{A}$, we give necessary and sufficient conditions for $\mathcal{A}$ to admit the structure of a model category whose cofibrant objects are $\mathcal{C}$ and whose fibrant objects are $\mathcal{F}$. We give various classification results for model structures on preorders by describing model structures in terms of their fibrant and cofibrant objects, or in terms of their (co)fibrant replacment (co)monads. This leads to a construction which takes topologies and matroids as input, and produces model structures on Boolean algebras. We carry out some detailed case studies, calculating all model structures on small Boolean algebras, and all the Bousfield localization and colocalization relations between them.
\end{abstract}
\maketitle

\section{Introduction}

Given a category $\mathcal{C}$, one might like to know something about all the model structures on $\mathcal{C}$, i.e., all the model categories whose underlying category is $\mathcal{C}$. For most categories $\mathcal{C}$, the problem of calculating all model structures on $\mathcal{C}$ seems intractable.

Among all small categories, the preorders\footnote{Here is a review of the most basic ideas. A preorder is a set equipped with a reflexive, transitive binary relation. Put another way: a preorder is what you get when you drop antisymmetry from the definition of a partially-ordered set. By a very old and well-known construction, one can think of a preorder $S$ as a small category by letting the elements of $S$ be the objects of the category, and by letting the set of morphisms $\hom_S(X,Y)$ in the category have a single element if $X\leq Y$, and letting $\hom_S(X,Y)$ be empty otherwise. This yields an embedding of the category of preorders into the category of small categories. The small categories which come from preorders in this way are precisely the small categories in which each hom-set has at most one element.} are particularly easy to study. Consequently it seems reasonable to treat preorders as a class of ``toy examples'' of categories on which all the model structures can perhaps be understood. Perhaps model structures on preorders are equivalent to some algebraic or order-theoretic or combinatorial data of a reasonable simple or familiar kind, and could consequently be classified.

That is what this paper is about. Here are the broad conclusions:
\begin{itemize}
\item A model structure on a preorder satisfying a certain simple and mild condition (i.e., a ``strong'' model structure in a sense to be defined in \cref{Existence of model structures...}) is equivalent to a reasonable piece of order-theoretic data, which amounts to specifying just the set of fibrant objects and the set of cofibrant objects. See Theorem \ref{lettered thm c} and Corollary \ref{lettered cor d}.
\item Those theorems let us construct model structures on Boolean algebras out of topologies and out of matroids; see \cref{Model structures constructed from...}. This is a new and perhaps interesting  construction.
\item If one wants to classify all model structures on a preorder, not just the ``strong'' model structures, then the job is harder. If one specifies a putative set of fibrant objects and a putative set of cofibrant objects, we give necessary and sufficient conditions (Corollary \ref{lettered cor b}) for the preorder to admit a model structure with precisely those (co)fibrant objects. Only one such model structure will be a strong model structure, but there may be many which are non-strong.
\item 
Proposition \ref{fact brackets and fact systems and model structs} establishes that model structures on any preorder are in bijection with ``factorization bracket pairs'' on that preorder. This is not a deep result at all: factorization bracket pairs are an algebraic piece of structure, defined in Definition \ref{def of fact bracket}, but they are merely a straightforward algebraic translation of what a model structure on a preorder must be. 
\item 
It is at least true that the algebraic formalism of factorization bracket pairs lends itself to a systematic calculation of all model structures (not just strong model structures) on a fixed preorder, with computer assistance. We programmed a computer to calculate all model structures on small Boolean algebras. The results are structurally interesting, and we report on them below, in \cref{case studies...} and in \cref{Explicit examples}.
\end{itemize}


\subsection{Existence of model structures with specified (co)fibrant objects}
\label{Existence of model structures...}

Droz and Zakharevich, in \cite{MR4226148}, study model categories whose underlying categories are preorders, and they provide a classification of such model categories {\em up to Quillen equivalence}: two such model categories are Quillen-equivalent if and only if their homotopy categories are equivalent. By contrast, in this paper we study model structures on a fixed preorder, ``on the nose,'' i.e., not up to Quillen equivalence. We prove some results of an explicit computational nature, as well as some results of a more general and conceptual nature. 

Here is the setup for our first conceptual result. Given a model structure on a preorder $\mathcal{A}$, one gets a fibrant replacement monad $F$ on $\mathcal{A}$, and a cofibrant replacement comonad on $C$ on $\mathcal{A}$. One might want to go the other way: starting with a monad $F$ and a comonad $C$ on a preorder $\mathcal{A}$, does there exist a model structure on $\mathcal{A}$ whose fibrant and cofibrant replacement (co)monads are $F$ and $C$, respectively? Here is the relevant theorem:
\begin{letteredtheorem} (Theorem \ref{characterization thm}) \label{lettered thm a} Let $\mathcal{A}$ be a bicomplete preorder, let $F$ be a monad on $\mathcal{A}$, and let $C$ be a comonad on $\mathcal{A}$. Then the following conditions are equivalent:
\begin{itemize}
\item There exists a model structure on $\mathcal{A}$ whose cofibrant replacement comonad is naturally isomorphic to $C$, and whose fibrant replacement monad is naturally isomorphic to $F$.
\item $F$ and $C$ are each idempotent, $F$ commutes with $C$ up to isomorphism, and if $X,Y$ are elements of $\mathcal{A}$ such that $X\leq CY$ and $FX\leq Y$, then $FX\leq CY$.
\end{itemize}
\end{letteredtheorem}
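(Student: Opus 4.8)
The plan is to prove the two implications separately; the forward implication is a sequence of short deductions, while the reverse implication, the construction of a model structure, is where essentially all the work lies.

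\textbf{The forward implication.} Assume a model structure on $\mathcal{A}$ with fibrant replacement monad $F$ and cofibrant replacement comonad $C$ is given. I would first observe that idempotence is automatic: on any preorder the unit $X \le FX$ together with functoriality forces $FX \le FFX$, while the multiplication forces $FFX \le FX$, so $FFX \cong FX$; dually for $C$. Thus only the commuting and interpolation conditions carry content. For $FC \cong CF$ I would show that $FCX$ and $CFX$ are both bifibrant: $FCX$ is fibrant as a value of $F$, and it is cofibrant because $CX$ is cofibrant and the fibrant replacement map $CX \to FCX$ is a trivial cofibration, so that $0 \to CX \to FCX$ exhibits $FCX$ as cofibrant; the argument for $CFX$ is symmetric. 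Since both are weakly equivalent to $X$, they are weakly equivalent to each other, and a weak equivalence between bifibrant objects of a model category is a homotopy equivalence; but in a preorder any two parallel maps coincide, so a homotopy equivalence is an honest isomorphism. Hence $FCX \cong CFX$, naturally. Finally, given $X \le CY$ and $FX \le Y$, these four objects form a commuting square whose left edge is the trivial cofibration $X \to FX$ and whose right edge is the fibration $CY \to Y$; the lifting axiom supplies a diagonal $FX \to CY$, i.e. $FX \le CY$. This is the entire content of the interpolation condition: it is exactly the statement that these canonical squares admit diagonal fillers.

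\textbf{The reverse implication: the construction.} Now suppose $F$ and $C$ satisfy the three conditions. I would declare $X \le Y$ to be a weak equivalence exactly when $FCX \cong FCY$; by the commuting hypothesis this agrees with $CFX \cong CFY$, and since the class is cut out by the isomorphism type of the invariant $X \mapsto FCX$, it is closed under isomorphism, which in a preorder already handles the retract axiom (a retract of an object is isomorphic to it). The structural input is a pair of functorial factorizations of a map $X \le Y$, built from $F$, $C$, and the lattice operations: namely $X \le (X \vee CY) \le Y$ and $X \le (FX \wedge Y) \le Y$. A direct computation shows $C(X \vee CY) \cong CY$ and $F(FX \wedge Y) \cong FX$, and from this that $X \to X \vee CY$ has the left lifting property against every map inverted by $C$ (the $C$-equivalences, meaning the $U \le V$ with $CU \cong CV$), that $FX \wedge Y \to Y$ has the right lifting property against every map inverted by $F$, and that the remaining two factors are weak equivalences. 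I would therefore take the cofibrations to be the maps lifting on the left against the $C$-equivalences and the fibrations to be the maps lifting on the right against the $F$-equivalences; the two factorizations then exhibit $(\text{cofibrations}, C\text{-equivalences})$ and $(F\text{-equivalences}, \text{fibrations})$ as weak factorization systems.

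\textbf{Where the difficulty lies.} With the two weak factorization systems and the weak equivalences in hand, assembling them into a single model structure requires one nontrivial fact: that every $F$-equivalence lifts on the left against every $C$-equivalence (equivalently, that the acyclic cofibrations and acyclic fibrations are genuine cofibrations and fibrations, and that the weak equivalences are exactly the $C$-equivalence-after-$F$-equivalence composites, whence two-out-of-three). Concretely, given $FA \cong FB$ and $CU \cong CV$ with $A \le U$ and $B \le V$, one must produce a diagonal $B \le U$. I expect this to be the main obstacle, and to be precisely where all three hypotheses are used at once: idempotence and the commuting isomorphism should let one reduce such a general lifting square, by interpolating suitable meets and joins, to the canonical square of a unit against a counit, whereupon the interpolation hypothesis supplies the diagonal. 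That the hypotheses are genuinely needed and the reduction cannot proceed by idempotence alone can be seen already on the four-element Boolean algebra, where the diagonal fails for pairs $F, C$ violating the commuting or interpolation conditions.

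\textbf{Recovering $F$ and $C$.} Finally I would confirm that the constructed model structure has $F$ and $C$ as its replacement (co)monads. Specializing the first factorization to $0 \le X$ gives $0 \le CX \le X$, exhibiting $CX$ as a cofibrant replacement, and specializing the second to $X \le 1$ gives $X \le FX \le 1$, exhibiting $FX$ as a fibrant replacement; in particular the fibrant objects are exactly the fixed points of $F$ and the cofibrant objects exactly the fixed points of $C$. Since on a preorder the fibrant replacement monad must send each object to the least fibrant object above it and the cofibrant replacement comonad to the greatest cofibrant object below it, these are forced to be naturally isomorphic to $F$ and $C$, completing the proof.
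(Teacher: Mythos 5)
Your forward implication is essentially the paper's own argument (Propositions \ref{monadic replacements}, \ref{compatibility conditions between comonad and monad} and \ref{compatible pairs from model preorders}) and is correct. The reverse implication, however, rests on a false step.

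The ``one nontrivial fact'' you defer to --- that every map inverted by $F$ lifts on the left against every map inverted by $C$ --- is not a consequence of the hypotheses. It is exactly the orthogonality condition of Definition \ref{def of orth cond}, which the paper shows (Corollary \ref{orth equals strongly}) is equivalent to \emph{strong} compatibility, a strictly stronger condition than the compatibility assumed in the theorem. Concretely, on $\mathcal{P}(\{a,b\})$ let $F$ be the constant monad at $\{a,b\}$ and let $C$ fix $\{a,b\}$ and send the other three elements to $\emptyset$. This pair is compatible (it is the $(F,C)$ of model structure $(4,4)$ in diagram \eqref{n2 model structs}), yet $\emptyset\to\{a\}$ is inverted by both $F$ and $C$ without being an isomorphism, so the lifting square of $\emptyset\to\{a\}$ against itself has no diagonal. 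Had your construction succeeded, its acyclic cofibrations would be $\mathcal{I}_F$ and its acyclic fibrations $\mathcal{I}_C$, forcing $\emptyset\to\{a\}$ to be an isomorphism --- so the model structure you describe simply does not exist for this pair, even though the theorem's conclusion does hold for it: model structure $(4,4)$ has fibrant replacement $F$ and cofibrant replacement $C$, but there $\emptyset\to\{a\}$ is an acyclic fibration that is \emph{not} a cofibration. The underlying error is the tacit assumption that the desired model structure must have acyclic cofibrations exactly $\mathcal{I}_F$ and acyclic fibrations exactly $\mathcal{I}_C$; that characterizes only the strong model structures (Theorem \ref{CPs induce model structures}), and no construction symmetric in $F$ and $C$ of the kind you propose can work for a general compatible pair.

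The paper's actual route (Theorem \ref{main construction thm}) is necessarily more lopsided. It starts from the Stanculescu model structure, with weak equivalences $\mathcal{I}_{FC}$, acyclic cofibrations $\mathcal{I}_F$, and cofibrations $\Proj(\mathcal{I}_{FC}\cap\Inj(\mathcal{I}_F))$; this gets the fibrants right but its cofibrants are the semicofibrants ($X\leq FCX$) rather than the $C$-cofibrants. It then deletes the cofibrations $0\to X$ with $X$ not $C$-cofibrant, obtaining a smaller cofibrantly closed class $\mathcal{Z}$, and the real work is the identity $\mathcal{I}_{FC}\cap\Inj(\mathcal{Z}\cap\mathcal{I}_{FC})=\Inj(\mathcal{Z})$, which shows that $\mathcal{Z}$ is the class of cofibrations of a model structure with the same weak equivalences and fibrants but with cofibrants precisely the $C$-cofibrants. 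If you want to repair your argument, you would need to replace your two symmetric weak factorization systems with something along these lines.
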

The following corollary of Theorem \ref{lettered thm a} indicates one way that we expect the theorem can be useful: if we specify the desired cofibrant objects and desired fibrant objects in some putative model structure on a bicomplete preorder $\mathcal{A}$, then our result tells us precisely what conditions must be satisfied in order for there to exist a model structure on $\mathcal{A}$ with the specific cofibrant and fibrant objects. 
\begin{letteredcorollary}  \label{lettered cor b} Let $\mathcal{A}$ be a bicomplete preorder, and let $\mathcal{F},\mathcal{C}$ be subsets of $\mathcal{A}$. Then there exists a model structure on $\mathcal{A}$ whose fibrant objects are precisely the members of $\mathcal{F}$, and whose cofibrant objects are precisely the members of $\mathcal{C}$, if and only if all of the following conditions are satisfied:
\begin{enumerate}
\item $\mathcal{F}$ and $\mathcal{C}$ are each closed under isomorphism in $\mathcal{A}$.
\item $\mathcal{F}$ is reflective, i.e., for each element $x\in\mathcal{A}$, the set of upper bounds for $x$ which lie in $\mathcal{F}$ has a least element.
\item $\mathcal{C}$ is coreflective, i.e., for each element $x\in\mathcal{A}$, the set of lower bounds for $x$ which lie in $\mathcal{C}$ has a greatest element.
\item If $x\in\mathcal{A}$ and $f\in\mathcal{F}$ and $c\in\mathcal{C}$ satisfy the conditions\footnote{This mazelike statement is a choice-free, first-order way of saying that a reflector functor $F: \mathcal{A}\rightarrow\mathcal{F}$ commutes with a coreflector functor $C: \mathcal{A}\rightarrow\mathcal{C}$ up to isomorphism in $\mathcal{A}$.}
\begin{itemize}
\item $f$ is minimal among members of $\mathcal{F}$ which are greater than or equal to every lower bound for $x$ in $\mathcal{C}$,
\item and $c$ is maximal among members of $\mathcal{C}$ which is less than or equal to every upper bound for $x$ in $\mathcal{F}$,
\end{itemize}
then $b\cong c$, i.e., $b\leq c\leq b$. 
\item If $x,y\in\mathcal{A}$ are such that $x$ is less than or equal to every lower bound for $y$ in $\mathcal{C}$, and $y$ is greater than or equal to every upper bound for $x$ in $\mathcal{F}$, then some upper bound for $x$ in $\mathcal{F}$ is less than or equal to some lower bound for $y$ in $\mathcal{C}$.\footnote{This statement is a choice-free, first-order rephrasing of ``If $a\leq C(b)$ and $F(a)\leq b$, then $F(a)\leq C(b)$.''}
\end{enumerate}
If such a model structure on $\mathcal{A}$ indeed exists, then its homotopy category is precisely the sub-preorder $\mathcal{F}\cap \mathcal{C}$ of $\mathcal{A}$.
\end{letteredcorollary}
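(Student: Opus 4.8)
The plan is to deduce this corollary from \cref{lettered thm a} by translating between subsets of $\mathcal{A}$ and (co)monads on $\mathcal{A}$. The key dictionary is: on a bicomplete preorder, an idempotent monad $F$ is the same datum as a reflective, isomorphism-closed subset $\mathcal{F}$ (its set of fixed objects, those $x$ with $x\cong Fx$), with $F$ recovered as the reflector $Fx=$ the least member of $\mathcal{F}$ satisfying $x\le Fx$; dually, an idempotent comonad $C$ is the same datum as a coreflective, isomorphism-closed subset $\mathcal{C}$, with $Cx=$ the greatest member of $\mathcal{C}$ satisfying $Cx\le x$. First I would record this correspondence carefully, observing that in a preorder the unit $x\le Fx$ together with idempotency $FFx\cong Fx$ are exactly what makes $F$ left adjoint to the inclusion of its fixed objects, i.e. $Fx\le f\iff x\le f$ for $f\in\mathcal{F}$; this is precisely reflectivity plus closure under isomorphism. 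Conditions (1), (2), (3) of the corollary are then nothing but the assertion that $\mathcal{F}$ arises from an idempotent monad $F$ and $\mathcal{C}$ from an idempotent comonad $C$, using that the fibrant (resp.\ cofibrant) objects of a model structure are precisely the fixed objects of its fibrant replacement monad (resp.\ cofibrant replacement comonad).

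With $F$ and $C$ in hand I would unwind the remaining two conditions. Since $Fx$ is the least member of $\mathcal{F}$ above $x$ and $Cx$ the greatest member of $\mathcal{C}$ below $x$, a member of $\mathcal{F}$ lies above every $\mathcal{C}$-lower-bound of $x$ iff it lies above $Cx$, so the object that is ``minimal in $\mathcal{F}$ above every $\mathcal{C}$-lower-bound of $x$'' is $F(Cx)$; symmetrically the object ``maximal in $\mathcal{C}$ below every $\mathcal{F}$-upper-bound of $x$'' is $C(Fx)$. Thus condition (4) is exactly $FCx\cong CFx$, i.e. $F$ commutes with $C$ up to isomorphism, and condition (5) is the order-theoretic transcription --- as its footnote records --- of the implication ``$x\le Cy$ and $Fx\le y$ imply $Fx\le Cy$.'' Granting (1)--(5), the pair $(F,C)$ therefore satisfies all three hypotheses of \cref{lettered thm a} (idempotence from (2), (3); commutation from (4); the order condition from (5)), so there is a model structure with cofibrant replacement comonad $C$ and fibrant replacement monad $F$; its fibrant objects are the $F$-fixed objects, namely $\mathcal{F}$, and its cofibrant objects are the $C$-fixed objects, namely $\mathcal{C}$, as required.

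For the converse I would run the dictionary backwards: a model structure with fibrant objects $\mathcal{F}$ and cofibrant objects $\mathcal{C}$ supplies a fibrant replacement monad $F$ and cofibrant replacement comonad $C$ whose fixed objects are $\mathcal{F}$ and $\mathcal{C}$; applying the forward direction of \cref{lettered thm a} to $(F,C)$ yields idempotence together with the commutation and order conditions, which translate back into (1)--(5). I expect the main obstacle to be exactly this translation: one must verify that the ``choice-free, first-order'' statements (4) and (5) are genuinely equivalent to the clean (co)monad identities $FCx\cong CFx$ and ``$x\le Cy,\ Fx\le y\Rightarrow Fx\le Cy$,'' which needs care because $\mathcal{A}$ is only a preorder (so all assertions are up to isomorphism, not equality) and because one must confirm that the (co)monads produced really are the reflector and coreflector, with fixed-object sets equal to $\mathcal{F}$ and $\mathcal{C}$ on the nose.

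Finally, for the homotopy category, I would invoke the standard identification of the homotopy category of a model category with its category of bifibrant objects and homotopy classes of maps. In a preorder there is at most one morphism between any two objects, so homotopy classes of maps are simply morphisms, and the bifibrant objects are exactly $\mathcal{F}\cap\mathcal{C}$. It remains to check that no further localization is needed, i.e. that a weak equivalence between bifibrant objects is already an isomorphism in $\mathcal{A}$: by Whitehead's lemma such a map is a homotopy equivalence, and in a preorder a homotopy equivalence $x\le y$ supplies a morphism $y\le x$, whence $x\cong y$. Therefore the homotopy category is precisely the full sub-preorder of $\mathcal{A}$ on $\mathcal{F}\cap\mathcal{C}$.
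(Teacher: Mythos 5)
Your proposal is correct and follows essentially the same route the paper intends: Corollary \ref{lettered cor b} is obtained from Theorem \ref{lettered thm a} by the standard dictionary between isomorphism-closed reflective (resp.\ coreflective) subsets and idempotent monads (resp.\ comonads), under which conditions (1)--(3) say that $\mathcal{F}$ and $\mathcal{C}$ arise from such a pair $(F,C)$, condition (4) is the commutation $FCx\cong CFx$, and condition (5) is the lifting condition, with the homotopy-category claim coming from the identification of the homotopy category of a model preorder with its sub-preorder of bifibrant objects (Proposition \ref{cofs between cofibrants}). No substantive difference from the paper's argument.
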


Suppose we are given a bicomplete category $\mathcal{A}$ and, speaking informally, some ``piece'' of a model structure: perhaps we are given a putative class of weak equivalences, or a putative class of cofibrant objects, etc., and we ask whether there indeed exists a model structure on $\mathcal{A}$ with the specified piece of structure. One says (following Droz and Zakharevich \cite{MR4226148}) that the given piece of structure {\em extends to a model structure.} The main theorem of \cite{MR4226148} is that there is no necessary and sufficient condition {\em expressible in first-order logic}\footnote{Here by ``expressible in first-order logic,'' we (and Droz--Zakharevich) mean ``expressible using quantification over elements (which may themselves be objects and/or morphisms in a category), but without quantifying over sets or functions.''} for whether a putative class of weak equivalences $W$ extends to a model structure on $\mathcal{A}$. In fact their proof is really concerned with model structures on preorders: they construct preorders $\mathcal{P},\mathcal{P}^{\prime}$ and full subcategories $W,W^{\prime}$ of $\mathcal{P}$ and $\mathcal{P}^{\prime}$ respectively, such that the pair $(\mathcal{P},W)$ satisfies the same first-order sentences as the pair $(\mathcal{P}^{\prime},W^{\prime})$, such that $\mathcal{P}^{\prime}$ admits a model structure with $W^{\prime}$ its class of weak equivalences, and such that $\mathcal{P}$ does not admit a model structure with $W$ its class of weak equivalences. Hence Droz and Zakharevich have proven, in particular, that there is no first-order characterization of when a putative class of weak equivalences {\em on a preorder} extends to a model structure.

By contrast, Corollary \ref{lettered cor b} is a first-order characterization of when a putative class of cofibrant objects and a putative class of of fibrant objects extend to a model structure on a preorder. 

For our next result, Theorem \ref{lettered thm c}, let us say that a model structure is {\em strong} if the following two conditions are satisfied:
\begin{itemize}
\item if $g\circ f$ is an acyclic cofibration, then $f$ is an acyclic cofibration, 
\item and, if $g\circ f$ is an acyclic fibration, then $g$ is an acyclic fibration.
\end{itemize}
\begin{letteredtheorem}\label{lettered thm c} (Theorem \ref{CPs induce model structures})
Suppose $\mathcal{A}$ is a finitely bicomplete preorder. Let $F$ be a monad on $\mathcal{A}$ and let $C$ be a comonad on $\mathcal{A}$. Then there exists a {\em strong} model structure on $\mathcal{A}$ whose fibrant replacement monad is $F$, and whose cofibrant replacement comonad is $C$, if and only if the following conditions are all satisfied:
\begin{itemize}
\item The two conditions from Theorem \ref{lettered thm a}.
\item If $a\leq b$ and $Cb\leq Ca$ and $Fb\leq Fa$, then $b\leq a$. 
\end{itemize}
Furthermore, we have a bijection between strong model structures on $\mathcal{A}$ and isomorphism classes of monad-comonad pairs $(F,C)$ satisfying the above conditions.
\end{letteredtheorem}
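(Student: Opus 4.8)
The plan is to reduce the whole statement to an order-theoretic description of the acyclic cofibrations and acyclic fibrations, which I would establish first as the key lemma: in \emph{any} strong model structure on $\mathcal{A}$ with fibrant replacement monad $F$ and cofibrant replacement comonad $C$, a morphism $x\to y$ (that is, a relation $x\le y$) is an acyclic cofibration if and only if $y\le Fx$, and is an acyclic fibration if and only if $Cy\le x$. The forward implications are general facts: lifting $x\to y$ against the fibration $Fx\to\top$ in the square whose other edges are $x\to Fx$ and $y\to\top$ produces $y\le Fx$, and dually lifting the cofibration $\bot\to Cy$ against $x\to y$ produces $Cy\le x$. The reverse implications are exactly where strongness enters: if $y\le Fx$ then the factorization $x\to y\to Fx$ exhibits $x\to y$ as a map whose composite with $y\to Fx$ is the acyclic cofibration $x\to Fx$, so the first strongness axiom makes $x\to y$ an acyclic cofibration; dually $Cy\le x$ together with the acyclic fibration $Cy\to y$ and the second strongness axiom makes $x\to y$ an acyclic fibration.

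Granting the lemma, the rest of the ``only if'' direction and the injectivity of the claimed bijection are immediate. For necessity of the extra condition, suppose $a\le b$ with $Cb\le Ca$ and $Fb\le Fa$; monotonicity of $C$ and $F$ upgrades these to $Ca\cong Cb$ and $Fa\cong Fb$. Then $b\le Fb\cong Fa$ shows, by the lemma, that $a\to b$ is an acyclic cofibration, while $Cb\cong Ca\le a$ shows it is an acyclic fibration. Being simultaneously a cofibration and an acyclic fibration, $a\to b$ has the left lifting property against itself, so the commutative square with identity horizontal edges admits a diagonal $b\to a$, i.e. $b\le a$. For uniqueness, the lemma pins down the acyclic cofibrations and acyclic fibrations of a strong model structure from $(F,C)$ alone, and in any model structure the cofibrations, fibrations and weak equivalences are recovered as the left and right lifting classes of the acyclic fibrations and acyclic cofibrations, and as the composites of acyclic cofibrations with acyclic fibrations; hence a strong model structure is completely determined by $(F,C)$.

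For the ``if'' direction and surjectivity, given $(F,C)$ satisfying the hypotheses I would \emph{define} the five classes dictated by the lemma: weak equivalences $\{x\le y : Cy\le Fx\}$ (equivalently $FCx\cong FCy$), acyclic cofibrations $\{x\le y: y\le Fx\}$, acyclic fibrations $\{x\le y: Cy\le x\}$, cofibrations $\{x\le y: y\le x\vee Cy\}$, and fibrations $\{x\le y: Fx\wedge y\le x\}$, all meets and joins existing by finite bicompleteness. One checks that $(\text{cofibrations},\text{acyclic fibrations})$ and $(\text{acyclic cofibrations},\text{fibrations})$ are weak factorization systems: the lifting properties are direct order computations, and the required factorizations of $x\le y$ are $x\le (x\vee Cy)\le y$ and $x\le (Fx\wedge y)\le y$, whose verification uses only idempotence of $F,C$ and the isomorphism $FC\cong CF$. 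The weak equivalences satisfy two-out-of-three, transparently from the description $FCx\cong FCy$, since $x\le y\le z$ forces the chain $FCx\le FCy\le FCz$. The standard recognition principle that two compatible weak factorization systems whose class of composites has two-out-of-three assemble into a model structure then produces a model structure; it is strong because $y\le Fx$ is inherited by every $x\le y'\le y$ and $Cy\le x$ forces $Cy'\le x$ for every $x\le y'\le y$, and its replacements are $F$ and $C$ by the explicit factorizations of $x\to\top$ and $\bot\to x$.

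The one non-formal input, and the step I expect to be the main obstacle, is the compatibility of the two systems, i.e. that every acyclic fibration is a fibration and every acyclic cofibration is a cofibration; this is precisely where the extra hypothesis is used. To see that an acyclic fibration $Cy\le x$ is a fibration, set $m=Fx\wedge y$; a short computation gives $Cm\cong Cx$ and $Fm\cong Fx$, so applying the extra condition to $x\le m$ yields $m\le x$, which is the fibration condition $Fx\wedge y\le x$. The dual argument, applied to $x\vee Cy$, shows acyclic cofibrations are cofibrations; moreover the same technique derives the mixing condition of Theorem \ref{lettered thm a} from idempotence, commutativity and the extra condition, so no separate appeal to Theorem \ref{lettered thm a} is needed here. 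Finally the two assignments, a strong model structure to the isomorphism class of its replacement monad and comonad, and a pair $(F,C)$ to the model structure constructed above, are mutually inverse: one composite is the identity by the uniqueness argument of the second paragraph, the other by the computation of replacements, and the passage to isomorphism classes is forced since the construction depends on $(F,C)$ only through the relations $y\le Fx$ and $Cy\le x$. This establishes the asserted bijection.
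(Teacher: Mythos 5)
Your proof is correct, and its skeleton coincides with the paper's: the paper likewise characterizes the acyclic cofibrations of a strong model structure as $\mathcal{I}_F$ and the acyclic fibrations as $\mathcal{I}_C$ (Proposition \ref{sectile fibs prop} and the end of the proof of Theorem \ref{CPs induce model structures}), and assembles the model structure from the two factorization systems these classes generate, with the same factorizations $x\leq Fx\wedge y\leq y$ and $x\leq x\vee Cy\leq y$ (the pullback in Proposition \ref{prelim chk thm} is exactly your meet). The genuine differences lie in how the one non-formal step---that acyclic (co)fibrations are (co)fibrations---is handled. The paper expresses this as the containment $\mathcal{I}_F\subseteq\Proj(\mathcal{I}_C)$, observes that it is equivalent to the orthogonality condition of Definition \ref{def of orth cond}, and then separately identifies orthogonality with the condition ``inverted by both $F$ and $C$ implies isomorphism'' in Corollary \ref{orth equals strongly}; your meet/join trick ($m=Fx\wedge y$ satisfies $Fm\cong Fx$ and $Cm\cong Cx$, so the extra hypothesis forces $m\leq x$) derives the same containment directly from the hypothesis as stated in Theorem \ref{lettered thm c}, which is more self-contained and avoids the detour through orthogonal pairs. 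Your further claim that the mixed condition of Theorem \ref{lettered thm a} ($X\leq CY$ and $FX\leq Y$ imply $FX\leq CY$) follows from idempotence, commutativity, and the extra condition is also correct: apply the extra condition to $FX\wedge CY\leq FX$, using that $FCX\cong CFX$ is $C$-fixed and lies below $Y$, hence below $CY$, to get $CFX\leq C(FX\wedge CY)$. So your construction never needs to invoke that hypothesis separately, whereas the paper carries it as part of the definition of a compatible pair. What each approach buys: the paper's factors the argument through reusable intermediate results (the Cassidy--H\'{e}bert--Kelly-type Proposition \ref{prelim chk thm} and the bijection with retractile factorization systems), while yours turns every verification into a short lattice computation and exposes a small redundancy in the stated hypotheses.

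One small slip, not a gap: in your strongness check you verify that $Cy\leq x$ forces $Cy'\leq x$ for $x\leq y'\leq y$, which is retractility of the acyclic fibrations; strongness requires their sectility, i.e., that $y'\to y$ is an acyclic fibration, which follows just as immediately from $Cy\leq x\leq y'$.
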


Using the fact that isomorphism classes of idempotent monads (respectively, comonads) are equivalent to reflective (respectively, coreflective) subcategories, it is straightforward to rephrase Theorem \ref{lettered thm c} as follows:
\begin{letteredcorollary}\label{lettered cor d} 
Suppose $\mathcal{A}$ is a finitely bicomplete preorder. Let $\mathcal{F},\mathcal{C}$ be subsets of $\mathcal{A}$. Then there exists a strong model structure on $\mathcal{A}$ whose fibrant objects are precisely the members of $\mathcal{F}$, and whose cofibrant objects are precisely the members of $\mathcal{C}$, if and only if all of the following conditions are satisfied:
\begin{itemize}
\item The conditions from Corollary \ref{lettered cor b}.
\item If $a\leq b$, and some lower bound for $b$ in $\mathcal{C}$ is also a lower bound for $a$, and some upper bound for $a$ in $\mathcal{F}$ is also an upper bound for $b$, then $b\leq a$.
\end{itemize}
Furthermore, we have a bijection between strong model structures on $\mathcal{A}$ and pairs of subsets $(\mathcal{F},\mathcal{C})$ of $\mathcal{A}$ satisfying the above conditions.
\end{letteredcorollary}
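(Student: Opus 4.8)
The plan is to deduce \cref{lettered cor d} from \cref{lettered thm c} by transporting its hypotheses and its bijection across the standard dictionary between idempotent (co)monads on a preorder and (co)reflective subsets. First I would recall that an idempotent monad $F$ on $\mathcal{A}$ is determined, up to isomorphism, by its subset of fixed points $\mathcal{F}=\{x\in\mathcal{A}:x\cong Fx\}$, which is isomorphism-closed and reflective; conversely, any isomorphism-closed reflective subset $\mathcal{F}$ arises from the reflector $F(x)=\min\{f\in\mathcal{F}:x\le f\}$, and these assignments are mutually inverse on isomorphism classes of monads. Dually, isomorphism classes of idempotent comonads correspond to isomorphism-closed coreflective subsets $\mathcal{C}$ via the coreflector $C(x)=\max\{c\in\mathcal{C}:c\le x\}$. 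Since the fibrant objects of a model structure are exactly the fixed points of its fibrant replacement monad (the identification that underlies the passage from \cref{lettered thm a} to \cref{lettered cor b}), and dually for cofibrant objects, this dictionary carries the datum ``(monad $F$, comonad $C$)'' of \cref{lettered thm c} to the datum ``(set of fibrant objects $\mathcal{F}$, set of cofibrant objects $\mathcal{C}$)'' of \cref{lettered cor d}.

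Next I would verify that, under this dictionary, the hypotheses of \cref{lettered thm c} become precisely the conditions of \cref{lettered cor d}. The two conditions inherited from \cref{lettered thm a} translate into conditions (1)--(5) of \cref{lettered cor b}: the existence of the idempotent (co)monads is absorbed into conditions (1)--(3) (isomorphism-closure, reflectivity of $\mathcal{F}$, coreflectivity of $\mathcal{C}$), the commutation condition becomes (4), and the lifting condition becomes (5). This portion of the translation is exactly the already-established derivation of \cref{lettered cor b} from \cref{lettered thm a}, which I would simply cite. The only genuinely new work is transcribing the additional ``strong'' condition, ``if $a\le b$ and $Cb\le Ca$ and $Fb\le Fa$ then $b\le a$.'' Here I would record two elementary equivalences valid whenever $a\le b$: since monotonicity already gives $Ca\le Cb$ and $Fa\le Fb$, we have
\[
Cb\le Ca\iff Cb\le a,\qquad Fb\le Fa\iff b\le Fa,
\]
the nontrivial implications holding because $Cb\in\mathcal{C}$ being a lower bound for $a$ forces $Cb\le Ca$, and $Fa\in\mathcal{F}$ being an upper bound for $b$ forces $Fb\le Fa$. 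Reading ``$Cb\le a$'' as the statement that the greatest lower bound for $b$ in $\mathcal{C}$ is a lower bound for $a$, and ``$b\le Fa$'' as the statement that the least upper bound for $a$ in $\mathcal{F}$ is an upper bound for $b$, then yields the second displayed condition of \cref{lettered cor d}.

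Finally I would assemble the bijection. \cref{lettered thm c} already supplies a bijection between strong model structures on $\mathcal{A}$ and isomorphism classes of qualifying pairs $(F,C)$; composing this with the dictionary bijection (isomorphism classes of idempotent monads with isomorphism-closed reflective subsets, and dually) and with the condition-translation of the previous paragraph produces the asserted bijection with pairs $(\mathcal{F},\mathcal{C})$ of subsets satisfying the conditions of \cref{lettered cor d}. The step I expect to be the main obstacle is the faithful choice-free, first-order transcription of the (co)monadic inequalities: one must ensure that the order-theoretic renderings of $Cb\le Ca$ and $Fb\le Fa$ are genuinely \emph{equivalent} to, rather than merely implied by, the (co)monadic statements, which forces one to pin down the extremal bounds by quantifying over all lower or upper bounds in $\mathcal{C}$ or $\mathcal{F}$ rather than asserting the existence of a single one. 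Granting the correctness of that transcription, everything else follows routinely from \cref{lettered thm c} and from the established derivation of \cref{lettered cor b}.
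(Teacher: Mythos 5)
Your proposal is correct and follows the same route the paper takes: the paper derives Corollary~\ref{lettered cor d} from Theorem~\ref{lettered thm c} precisely by the dictionary between isomorphism classes of idempotent (co)monads and replete (co)reflective subcategories, with the fibrant/cofibrant objects identified as the fixed points of $F$ and $C$ (Corollary~\ref{identification of fibrant objects}), and your verification that, for $a\le b$, $Cb\le Ca\iff Cb\le a$ and $Fb\le Fa\iff b\le Fa$ is exactly the intended translation of the extra ``strong'' condition. Your closing caveat is also well taken: the choice-free rendering must pin down the \emph{greatest} lower bound in $\mathcal{C}$ and the \emph{least} upper bound in $\mathcal{F}$ (as in your phrasing), since quantifying over an arbitrary such bound would weaken the condition.
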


\subsection{Model structures constructed from topologies and from matroids}
\label{Model structures constructed from...}

Among preorders, an especially famous class is the Boolean algebras. The rest of our results concern model structures on Boolean algebras. The central examples of Boolean algebras are power sets: it is classical \cite{MR1501865} that every complete atomic Boolean algebra is the power set of a set, and in particular, every finite Boolean algebra is the power set of a set. If $S$ is a set, then idempotent monads on its power set $\mathcal{P}(S)$ are in bijection with Moore collections in $S$, i.e., collections of subsets of $S$ which are closed under arbitrary intersections. Among Moore collections on $S$, there are two types of particular interest: the topologies on $S$, and the matroids on $S$. 

Given a suitably compatible pair of topologies $(\mathcal{T}_1,\mathcal{T}_2)$ on a set $S$, Theorem \ref{lettered thm c} yields a unique strong model structure on the Boolean algebra $\mathcal{P}(S)$ by regarding $\mathcal{T}_1$ (presented via its closed sets) as a Moore collection in $S$, regarding $\mathcal{T}_2$ as a co-Moore collection in $S$, and translating these Moore and co-Moore collections into an idempotent monad and idempotent comonad on $\mathcal{P}(S)$. Unwinding the conditions of Theorem \ref{lettered thm c}, we get Proposition \ref{closure-interior compatibility}, whose statement is as follows. We will write $\Cl(U)$ for the closure of a subset $U\subseteq A$ in the topology $\mathcal{T}_1$, and we write $\Int(U)$ for the interior of $U$ in the topology $\mathcal{T}_2$. Then Proposition \ref{closure-interior compatibility} asserts that a pair $(\mathcal{T}_1,\mathcal{T}_2)$ defines a strong model structure on $\mathcal{P}(S)$ if and only if the following conditions are satisfied:
\begin{itemize}
\item $\Int(\Cl(U)) = \Cl(\Int(U))$ for all $U\subseteq S$,
\item If $U,V$ are subsets of $X$ such that $U\subseteq \Int(V)$ and $\Cl(U)\subseteq V$, then $\Cl(U)\subseteq \Int(V)$.
\item
If $U,V$ are subsets of $X$ such that $U\subseteq V$ and $\Cl(U) = \Cl(V)$ and $\Int(U) = \Int(V)$, then $U=V$.
\end{itemize}
In that model structure on $\mathcal{P}(S)$, $\Cl$ is fibrant replacement, and $\Int$ is cofibrant replacement. The homotopy category of the model structure is the sub-poset of $\mathcal{P}(S)$ consisting of those subsets of $S$ which are both closed in topology $\mathcal{T}_1$ and open in topology $\mathcal{T}_2$. We call such a model structure, arising from a pair of topologies, {\em topological}.

Similarly, given a pair of matroids\footnote{See Definition \ref{def of matroid} for the definition of a matroid, if desired.} $(\mathcal{M}_1,\mathcal{M}_2)$ with the same underlying set $S$, Proposition \ref{model structs from matroids} tells us precisely when we get a corresponding strong model structure on the power set $\mathcal{P}(S)$. Write $\Cl$ for the closure operator in $\mathcal{M}_1$, i.e., $\Cl(U)$ is the intersection of all the flats in $\mathcal{M}_1$ containing $U$. Similarly, write $\Int(U)$ for the
union of the complements of the flats in $\mathcal{M}_2$ containing the complement of $U$. Then we get a corresponding strong model structure on $\mathcal{P}(S)$ if and only if the same two conditions as in the topological case, above, are satisfied. The homotopy category of the model structure is the sub-poset of $\mathcal{P}(S)$ consisting of those subsets $U$ of $S$ such that $U$ is a flat in the matroid $\mathcal{M}_1$, and $U$ is also a cycle in the dual matroid $\mathcal{M}_2^*$ of $\mathcal{M}_2$. We call these model structures, arising from pairs of matroids, {\em matroidal}. Similarly, those that arise from pairs of simple matroids, i.e., ``geometries'' in the sense of \cite{MR1783451}, will be called {\em geometric}.

\subsection{Case studies of all model structures on small Boolean algebras}
\label{case studies...}

With the above structural results and constructions in hand, we are in a position to carry out explicit calculations (with computer assistance) of all the model structures on some small Boolean algebras, and to really understand what we are looking at.
In \cref{Model structures arising from topologies} and in \cref{Explicit examples}, we give the results of such calculations.
Let $\mathcal{P}_n$ denote the power set of an $n$-element set.
We show that there are:
\begin{itemize}
\item 3 model structures on $\mathcal{P}_1$. All are matroidal, hence also strong, but none are topological or geometric except the discrete model structure (i.e., the unique model structure in which the only weak equivalences are isomorphisms), which is both topological and geometric.
\item 23 model structures on $\mathcal{P}_2$, of which 17 are strong, 9 are topological, 11 are matroidal, and one---the discrete model structure---is geometric.
\item 1026 model structures on $\mathcal{P}_3$, of which 377 are strong, 84 are topological, 50 are matroidal, and 4 are geometric.
\end{itemize}
Our methods yield much more detailed information than merely these enumerations. The following diagram depicts all 23 model structures on $\mathcal{P}_2$:
\begin{equation}
\label{n2 model structs}
\tikzset{
 d00/.pic={
\draw[lightgray, very thin] (0,-1) -- (1,0) -- (0,1) -- (-1,0) -- cycle; 
\draw[lightgray, very thin] (0,-1) -- (0,1);
\draw[green] (0,-1) node[circle,minimum size=0cm,fill=green!40] {};
\path (-.5,0.5) node[color=black] {$fw$} ;
\path (.5,0.5) node[color=black] {$fw$} ;
\path (0,0) node[color=black] {$fw$} ;
\path (-.5,-0.5) node[color=black] {$fw$} ;
\path (.5,-0.5) node[color=black] {$fw$};
\path (0,2) node [color=black] {$0,0_*$};
}}
\tikzset{
 d04/.pic={
\draw[lightgray, very thin] (0,-1) -- (1,0) -- (0,1) -- (-1,0) -- cycle; 
\draw[lightgray, very thin] (0,-1) -- (0,1);
\draw[green] (0,-1) node[circle,minimum size=0cm,fill=green!40] {};
\draw[green] (0,1) node[circle,minimum size=0cm,fill=green!40] {};
\path (-.5,0.5) node[color=black] {$cf$} ;
\path (.5,0.5) node[color=black] {$cf$} ;
\path (0,0) node[color=black] {$cf$} ;
\path (-.5,-0.5) node[color=black] {$fw$} ;
\path (.5,-0.5) node[color=black] {$fw$};
\path (0,2) node [color=black] {$0,4_*$};
}}
\tikzset{
 d05/.pic={
\draw[lightgray, very thin] (0,-1) -- (1,0) -- (0,1) -- (-1,0) -- cycle; 
\draw[lightgray, very thin] (0,-1) -- (0,1);
\draw[green] (0,-1) node[circle,minimum size=0cm,fill=green!40] {};
\draw[green] (-1,0) node[circle,minimum size=0cm,fill=green!40] {};
\path (-.5,0.5) node[color=black] {$fw$} ;
\path (.5,0.5) node[color=black] {$cf$} ;
\path (0,0) node[color=black] {$f$} ;
\path (-.5,-0.5) node[color=black] {$cf$} ;
\path (.5,-0.5) node[color=black] {$fw$};
\path (0,2) node [color=black] {$0,5_*$};
}}
\tikzset{
 d06/.pic={
\draw[lightgray, very thin] (0,-1) -- (1,0) -- (0,1) -- (-1,0) -- cycle; 
\draw[lightgray, very thin] (0,-1) -- (0,1);
\draw[green] (0,-1) node[circle,minimum size=0cm,fill=green!40] {};
\draw[green] (-1,0) node[circle,minimum size=0cm,fill=green!40] {};
\draw[green] (0,1) node[circle,minimum size=0cm,fill=green!40] {};
\path (-.5,0.5) node[color=black] {$cf$} ;
\path (.5,0.5) node[color=black] {$cf$} ;
\path (0,0) node[color=black] {$cf$} ;
\path (-.5,-0.5) node[color=black] {$cf$} ;
\path (.5,-0.5) node[color=black] {$fw$};
\path (0,2) node [color=black] {$0,6_*$};
}}
\tikzset{
 d07/.pic={
\draw[lightgray, very thin] (0,-1) -- (1,0) -- (0,1) -- (-1,0) -- cycle; 
\draw[lightgray, very thin] (0,-1) -- (0,1);
\draw[green] (0,-1) node[circle,minimum size=0cm,fill=green!40] {};
\draw[green] (1,0) node[circle,minimum size=0cm,fill=green!40] {};
\path (-.5,0.5) node[color=black] {$cf$} ;
\path (.5,0.5) node[color=black] {$fw$} ;
\path (0,0) node[color=black] {$f$} ;
\path (-.5,-0.5) node[color=black] {$fw$} ;
\path (.5,-0.5) node[color=black] {$cf$};
\path (0,2) node [color=black] {$0,7_*$};
}}
\tikzset{
 d08/.pic={
\draw[lightgray, very thin] (0,-1) -- (1,0) -- (0,1) -- (-1,0) -- cycle; 
\draw[lightgray, very thin] (0,-1) -- (0,1);
\draw[green] (0,-1) node[circle,minimum size=0cm,fill=green!40] {};
\draw[green] (1,0) node[circle,minimum size=0cm,fill=green!40] {};
\draw[green] (0,1) node[circle,minimum size=0cm,fill=green!40] {};
\path (-.5,0.5) node[color=black] {$cf$} ;
\path (.5,0.5) node[color=black] {$cf$} ;
\path (0,0) node[color=black] {$cf$} ;
\path (-.5,-0.5) node[color=black] {$fw$} ;
\path (.5,-0.5) node[color=black] {$cf$};
\path (0,2) node [color=black] {$0,8_*$};
}}
\tikzset{
 d09/.pic={
\draw[lightgray, very thin] (0,-1) -- (1,0) -- (0,1) -- (-1,0) -- cycle; 
\draw[lightgray, very thin] (0,-1) -- (0,1);
\draw[green] (0,-1) node[circle,minimum size=0cm,fill=green!40] {};
\draw[green] (-1,0) node[circle,minimum size=0cm,fill=green!40] {};
\draw[green] (1,0) node[circle,minimum size=0cm,fill=green!40] {};
\draw[green] (0,1) node[circle,minimum size=0cm,fill=green!40] {};
\path (-.5,0.5) node[color=black] {$cf$} ;
\path (.5,0.5) node[color=black] {$cf$} ;
\path (0,0) node[color=black] {$cf$} ;
\path (-.5,-0.5) node[color=black] {$cf$} ;
\path (.5,-0.5) node[color=black] {$cf$};
\path (0,2) node [color=black] {$0,9_*$};
}}
\tikzset{
 d11/.pic={
\draw[lightgray, very thin] (0,-1) -- (1,0) -- (0,1) -- (-1,0) -- cycle; 
\draw[lightgray, very thin] (0,-1) -- (0,1);
\draw[green] (0,-1) node[circle,minimum size=0cm,fill=green!40] {};
\path (-.5,0.5) node[color=black] {$cw$} ;
\path (.5,0.5) node[color=black] {$fw$} ;
\path (0,0) node[color=black] {$fw$} ;
\path (-.5,-0.5) node[color=black] {$fw$} ;
\path (.5,-0.5) node[color=black] {$fw$};
\path (0,2) node [color=black] {$1,1$};
}}
\tikzset{
 d16/.pic={
\draw[lightgray, very thin] (0,-1) -- (1,0) -- (0,1) -- (-1,0) -- cycle; 
\draw[lightgray, very thin] (0,-1) -- (0,1);
\draw[green] (0,-1) node[circle,minimum size=0cm,fill=green!40] {};
\draw[green] (0,1) node[circle,minimum size=0cm,fill=green!40] {};
\path (-.5,0.5) node[color=black] {$cw$} ;
\path (.5,0.5) node[color=black] {$cf$} ;
\path (0,0) node[color=black] {$cf$} ;
\path (-.5,-0.5) node[color=black] {$cf$} ;
\path (.5,-0.5) node[color=black] {$fw$};
\path (0,2) node [color=black] {$1,6_*$};
}}
\tikzset{
 d19/.pic={
\draw[lightgray, very thin] (0,-1) -- (1,0) -- (0,1) -- (-1,0) -- cycle; 
\draw[lightgray, very thin] (0,-1) -- (0,1);
\draw[green] (0,-1) node[circle,minimum size=0cm,fill=green!40] {};
\draw[green] (1,0) node[circle,minimum size=0cm,fill=green!40] {};
\draw[green] (0,1) node[circle,minimum size=0cm,fill=green!40] {};
\path (-.5,0.5) node[color=black] {$cw$} ;
\path (.5,0.5) node[color=black] {$cf$} ;
\path (0,0) node[color=black] {$cf$} ;
\path (-.5,-0.5) node[color=black] {$cf$} ;
\path (.5,-0.5) node[color=black] {$cf$};
\path (0,2) node [color=black] {$1,9_*$};
}}
\tikzset{
 d22/.pic={
\draw[lightgray, very thin] (0,-1) -- (1,0) -- (0,1) -- (-1,0) -- cycle; 
\draw[lightgray, very thin] (0,-1) -- (0,1);
\draw[green] (0,-1) node[circle,minimum size=0cm,fill=green!40] {};
\path (-.5,0.5) node[color=black] {$fw$} ;
\path (.5,0.5) node[color=black] {$cw$} ;
\path (0,0) node[color=black] {$fw$} ;
\path (-.5,-0.5) node[color=black] {$fw$} ;
\path (.5,-0.5) node[color=black] {$fw$};
\path (0,2) node [color=black] {$2,2$};
}}
\tikzset{
 d28/.pic={
\draw[lightgray, very thin] (0,-1) -- (1,0) -- (0,1) -- (-1,0) -- cycle; 
\draw[lightgray, very thin] (0,-1) -- (0,1);
\draw[green] (0,-1) node[circle,minimum size=0cm,fill=green!40] {};
\draw[green] (0,1) node[circle,minimum size=0cm,fill=green!40] {};
\path (-.5,0.5) node[color=black] {$cf$} ;
\path (.5,0.5) node[color=black] {$cw$} ;
\path (0,0) node[color=black] {$cf$} ;
\path (-.5,-0.5) node[color=black] {$fw$} ;
\path (.5,-0.5) node[color=black] {$cf$};
\path (0,2) node [color=black] {$2,8_*$};
}}
\tikzset{
 d29/.pic={
\draw[lightgray, very thin] (0,-1) -- (1,0) -- (0,1) -- (-1,0) -- cycle; 
\draw[lightgray, very thin] (0,-1) -- (0,1);
\draw[green] (0,-1) node[circle,minimum size=0cm,fill=green!40] {};
\draw[green] (-1,0) node[circle,minimum size=0cm,fill=green!40] {};
\draw[green] (0,1) node[circle,minimum size=0cm,fill=green!40] {};
\path (-.5,0.5) node[color=black] {$cf$} ;
\path (.5,0.5) node[color=black] {$cw$} ;
\path (0,0) node[color=black] {$cf$} ;
\path (-.5,-0.5) node[color=black] {$cf$} ;
\path (.5,-0.5) node[color=black] {$cf$};
\path (0,2) node [color=black] {$2,9_*$};
}}
\tikzset{
 d33/.pic={
\draw[lightgray, very thin] (0,-1) -- (1,0) -- (0,1) -- (-1,0) -- cycle; 
\draw[lightgray, very thin] (0,-1) -- (0,1);
\draw[green] (0,-1) node[circle,minimum size=0cm,fill=green!40] {};
\path (-.5,0.5) node[color=black] {$cw$} ;
\path (.5,0.5) node[color=black] {$cw$} ;
\path (0,0) node[color=black] {$fw$} ;
\path (-.5,-0.5) node[color=black] {$fw$} ;
\path (.5,-0.5) node[color=black] {$fw$};
\path (0,2) node [color=black] {$3,3$};
}}
\tikzset{
 d39/.pic={
\draw[lightgray, very thin] (0,-1) -- (1,0) -- (0,1) -- (-1,0) -- cycle; 
\draw[lightgray, very thin] (0,-1) -- (0,1);
\draw[green] (0,-1) node[circle,minimum size=0cm,fill=green!40] {};
\draw[green] (0,1) node[circle,minimum size=0cm,fill=green!40] {};
\path (-.5,0.5) node[color=black] {$cw$} ;
\path (.5,0.5) node[color=black] {$cw$} ;
\path (0,0) node[color=black] {$cf$} ;
\path (-.5,-0.5) node[color=black] {$cf$} ;
\path (.5,-0.5) node[color=black] {$cf$};
\path (0,2) node [color=black] {$3,9_*$};
}}
\tikzset{
 d44/.pic={
\draw[lightgray, very thin] (0,-1) -- (1,0) -- (0,1) -- (-1,0) -- cycle; 
\draw[lightgray, very thin] (0,-1) -- (0,1);
\draw[green] (0,1) node[circle,minimum size=0cm,fill=green!40] {};
\path (-.5,0.5) node[color=black] {$cw$} ;
\path (.5,0.5) node[color=black] {$cw$} ;
\path (0,0) node[color=black] {$cw$} ;
\path (-.5,-0.5) node[color=black] {$fw$} ;
\path (.5,-0.5) node[color=black] {$fw$};
\path (0,2) node [color=black] {$4,4$};
}}
\tikzset{
 d55/.pic={
\draw[lightgray, very thin] (0,-1) -- (1,0) -- (0,1) -- (-1,0) -- cycle; 
\draw[lightgray, very thin] (0,-1) -- (0,1);
\draw[green] (-1,0) node[circle,minimum size=0cm,fill=green!40] {};
\path (-.5,0.5) node[color=black] {$fw$} ;
\path (.5,0.5) node[color=black] {$cw$} ;
\path (0,0) node[color=black] {$w$} ;
\path (-.5,-0.5) node[color=black] {$cw$} ;
\path (.5,-0.5) node[color=black] {$fw$};
\path (0,2) node [color=black] {$5,5_*$};
}}
\tikzset{
 d59/.pic={
\draw[lightgray, very thin] (0,-1) -- (1,0) -- (0,1) -- (-1,0) -- cycle; 
\draw[lightgray, very thin] (0,-1) -- (0,1);
\draw[green] (-1,0) node[circle,minimum size=0cm,fill=green!40] {};
\draw[green] (0,1) node[circle,minimum size=0cm,fill=green!40] {};
\path (-.5,0.5) node[color=black] {$cf$} ;
\path (.5,0.5) node[color=black] {$cw$} ;
\path (0,0) node[color=black] {$c$} ;
\path (-.5,-0.5) node[color=black] {$cw$} ;
\path (.5,-0.5) node[color=black] {$cf$};
\path (0,2) node [color=black] {$5,9_*$};
}}
\tikzset{
 d66/.pic={
\draw[lightgray, very thin] (0,-1) -- (1,0) -- (0,1) -- (-1,0) -- cycle; 
\draw[lightgray, very thin] (0,-1) -- (0,1);
\draw[green] (0,1) node[circle,minimum size=0cm,fill=green!40] {};
\path (-.5,0.5) node[color=black] {$cw$} ;
\path (.5,0.5) node[color=black] {$cw$} ;
\path (0,0) node[color=black] {$cw$} ;
\path (-.5,-0.5) node[color=black] {$cw$} ;
\path (.5,-0.5) node[color=black] {$fw$};
\path (0,2) node [color=black] {$6,6$};
}}
\tikzset{
 d77/.pic={
\draw[lightgray, very thin] (0,-1) -- (1,0) -- (0,1) -- (-1,0) -- cycle; 
\draw[lightgray, very thin] (0,-1) -- (0,1);
\draw[green] (1,0) node[circle,minimum size=0cm,fill=green!40] {};
\path (-.5,0.5) node[color=black] {$cw$} ;
\path (.5,0.5) node[color=black] {$fw$} ;
\path (0,0) node[color=black] {$w$} ;
\path (-.5,-0.5) node[color=black] {$fw$} ;
\path (.5,-0.5) node[color=black] {$cw$};
\path (0,2) node [color=black] {$7,7_*$};
}}
\tikzset{
 d79/.pic={
\draw[lightgray, very thin] (0,-1) -- (1,0) -- (0,1) -- (-1,0) -- cycle; 
\draw[lightgray, very thin] (0,-1) -- (0,1);
\draw[green] (1,0) node[circle,minimum size=0cm,fill=green!40] {};
\draw[green] (0,1) node[circle,minimum size=0cm,fill=green!40] {};
\path (-.5,0.5) node[color=black] {$cw$} ;
\path (.5,0.5) node[color=black] {$cf$} ;
\path (0,0) node[color=black] {$c$} ;
\path (-.5,-0.5) node[color=black] {$cf$} ;
\path (.5,-0.5) node[color=black] {$cw$};
\path (0,2) node [color=black] {$7,9_*$};
}}
\tikzset{
 d88/.pic={
\draw[lightgray, very thin] (0,-1) -- (1,0) -- (0,1) -- (-1,0) -- cycle; 
\draw[lightgray, very thin] (0,-1) -- (0,1);
\draw[green] (0,1) node[circle,minimum size=0cm,fill=green!40] {};
\path (-.5,0.5) node[color=black] {$cw$} ;
\path (.5,0.5) node[color=black] {$cw$} ;
\path (0,0) node[color=black] {$cw$} ;
\path (-.5,-0.5) node[color=black] {$fw$} ;
\path (.5,-0.5) node[color=black] {$cw$};
\path (0,2) node [color=black] {$8,8$};
}}
\tikzset{
 d99/.pic={
\draw[lightgray, very thin] (0,-1) -- (1,0) -- (0,1) -- (-1,0) -- cycle; 
\draw[lightgray, very thin] (0,-1) -- (0,1);
\draw[green] (0,1) node[circle,minimum size=0cm,fill=green!40] {};
\path (-.5,0.5) node[color=black] {$cw$} ;
\path (.5,0.5) node[color=black] {$cw$} ;
\path (0,0) node[color=black] {$cw$} ;
\path (-.5,-0.5) node[color=black] {$cw$} ;
\path (.5,-0.5) node[color=black] {$cw$};
\path (0,2) node [color=black] {$9,9_*$};
}}
\scalebox{1.7}{
\begin{tikzpicture} [shorten <= 0.4cm,shorten >= 0.4cm,
 d00/.style={path picture={\pic at ([yshift=-0.1cm]path picture bounding box) {d00};}},
 d04/.style={path picture={\pic at ([yshift=-0.1cm]path picture bounding box) {d04};}},
 d05/.style={path picture={\pic at ([yshift=-0.1cm]path picture bounding box) {d05};}},
 d06/.style={path picture={\pic at ([yshift=-0.1cm]path picture bounding box) {d06};}},
 d07/.style={path picture={\pic at ([yshift=-0.1cm]path picture bounding box) {d07};}},
 d08/.style={path picture={\pic at ([yshift=-0.1cm]path picture bounding box) {d08};}},
 d09/.style={path picture={\pic at ([yshift=-0.1cm]path picture bounding box) {d09};}},
 d11/.style={path picture={\pic at ([yshift=-0.1cm]path picture bounding box) {d11};}},
 d16/.style={path picture={\pic at ([yshift=-0.1cm]path picture bounding box) {d16};}},
 d19/.style={path picture={\pic at ([yshift=-0.1cm]path picture bounding box) {d19};}},
 d22/.style={path picture={\pic at ([yshift=-0.1cm]path picture bounding box) {d22};}},
 d28/.style={path picture={\pic at ([yshift=-0.1cm]path picture bounding box) {d28};}},
 d29/.style={path picture={\pic at ([yshift=-0.1cm]path picture bounding box) {d29};}},
 d33/.style={path picture={\pic at ([yshift=-0.1cm]path picture bounding box) {d33};}},
 d39/.style={path picture={\pic at ([yshift=-0.1cm]path picture bounding box) {d39};}},
 d44/.style={path picture={\pic at ([yshift=-0.1cm]path picture bounding box) {d44};}},
 d55/.style={path picture={\pic at ([yshift=-0.1cm]path picture bounding box) {d55};}},
 d59/.style={path picture={\pic at ([yshift=-0.1cm]path picture bounding box) {d59};}},
 d66/.style={path picture={\pic at ([yshift=-0.1cm]path picture bounding box) {d66};}},
 d77/.style={path picture={\pic at ([yshift=-0.1cm]path picture bounding box) {d77};}},
 d79/.style={path picture={\pic at ([yshift=-0.1cm]path picture bounding box) {d79};}},
 d88/.style={path picture={\pic at ([yshift=-0.1cm]path picture bounding box) {d88};}},
 d99/.style={path picture={\pic at ([yshift=-0.1cm]path picture bounding box) {d99};}},
every pic/.style={scale=.2,every node/.style={scale=0.4}},
] 
\draw ( 0,-4) node[d00,minimum size=0.8cm,circle,draw,color=orange,fill=orange!10] {};
\draw ( 0,-2) node[d04,minimum size=0.8cm,circle,draw,color=orange,shading=true, left color=blue!10, right color=orange!10] {};
\draw ( 2,-2) node[d05,minimum size=0.8cm,circle,draw,color=orange,fill=orange!10] {};
\draw ( 1,-1) node[d06,minimum size=0.8cm,circle,draw,color=orange,fill=blue!10] {};
\draw (-2,-2) node[d07,minimum size=0.8cm,circle,draw,color=orange,fill=orange!10] {};
\draw (-1,-1) node[d08,minimum size=0.8cm,circle,draw,color=orange,fill=blue!10] {};
\draw ( 0, 0) node[d09,minimum size=0.8cm,circle,draw,color=red,fill=red!15] {};
\draw ( 3,-1) node[d11,minimum size=0.8cm,circle,draw,color=yellow] {};
\draw ( 2, 0) node[d16,minimum size=0.8cm,circle,draw,color=yellow,fill=blue!10] {};
\draw ( 1, 1) node[d19,minimum size=0.8cm,circle,draw,color=orange,fill=blue!10] {};
\draw (-3,-1) node[d22,minimum size=0.8cm,circle,draw,color=yellow] {};
\draw (-2, 0) node[d28,minimum size=0.8cm,circle,draw,color=yellow,fill=blue!10] {};
\draw (-1, 1) node[d29,minimum size=0.8cm,circle,draw,color=orange,fill=blue!10] {};
\draw ( 1, 2) node[d33,minimum size=0.8cm,circle,draw,color=yellow] {};
\draw ( 0, 2) node[d39,minimum size=0.8cm,circle,draw,color=orange,shading=true, left color=blue!10, right color=orange!10] {};
\draw (-1,-2) node[d44,minimum size=0.8cm,circle,draw,color=yellow] {};
\draw ( 4, 2) node[d55,minimum size=0.8cm,circle,draw,color=yellow,fill=orange!10] {};
\draw (-2, 2) node[d59,minimum size=0.8cm,circle,draw,color=orange,fill=orange!10] {};
\draw ( 3, 1) node[d66,minimum size=0.8cm,circle,draw,color=yellow] {};
\draw ( 4,-2) node[d77,minimum size=0.8cm,circle,draw,color=yellow,fill=orange!10] {};
\draw ( 2, 2) node[d79,minimum size=0.8cm,circle,draw,color=orange,fill=orange!10] {};
\draw (-3, 1) node[d88,minimum size=0.8cm,circle,draw,color=yellow] {};
\draw ( 0, 4) node[d99,minimum size=0.8cm,circle,draw,color=orange,fill=orange!10] {};
\draw [->,color=black] (-2, 2) -- ( 0, 4);
\draw [->,color=black] ( 2, 2) -- ( 0, 4);
\draw [->,color=black] ( 0, 2) -- ( 0, 4);
\draw [->,color=black] (-1, 1) -- (-2, 2);
\draw [->,color=black] (-1, 1) -- ( 0, 2);
\draw [->,color=black] ( 1, 1) -- ( 2, 2);
\draw [->,color=black] ( 1, 1) -- ( 0, 2);
\draw [->,color=black] (-2, 0) -- (-3, 1);
\draw [->,color=black] ( 0, 0) -- (-1, 1);
\draw [->,color=black] ( 0, 0) -- ( 1, 1);
\draw [->,color=black] (-1,-1) -- (-2, 0);
\draw [->,color=black] ( 1,-1) -- ( 2, 0);
\draw [->,color=black] ( 2, 0) -- ( 3, 1);
\draw [->,color=black] ( 0,-2) .. controls (-0.5,-2.5) and (-0.5,-2.5) .. (-1,-2);
\draw [->,color=black] ( 2,-2) .. controls (3,-3) and (5,1) .. ( 4, 2) ;
\draw [->,color=black] (-2,-2) .. controls (-1,-3) and (3,-3) .. ( 4,-2);
\draw [->,dashed,color=black] (-2, 2) .. controls (-1, 3) and (3, 3) .. ( 4, 2);
\draw [->,dashed,color=black] ( 2, 2) .. controls ( 3, 3) and (5,-1) .. ( 4,-2);
\draw [->,dashed,color=black] ( 0, 2) .. controls (0.5, 2.5) and (0.5, 2.5) .. ( 1, 2);
\draw [->,dashed,color=black] (-1, 1) -- (-2, 0);
\draw [->,dashed,color=black] (-2, 0) -- (-3,-1);
\draw [->,dashed,color=black] ( 0, 0) -- (-1,-1);
\draw [->,dashed,color=black] ( 0, 0) -- ( 1,-1);
\draw [->,dashed,color=black] ( 2, 0) -- ( 3,-1);
\draw [->,dashed,color=black] (-1,-1) -- (-2,-2);
\draw [->,dashed,color=black] ( 1,-1) -- ( 2,-2);
\draw [->,dashed,color=black] (-1,-1) -- ( 0,-2);
\draw [->,dashed,color=black] ( 2,-2) -- ( 0,-4);
\draw [->,dashed,color=black] ( 1, 1) -- ( 2, 0);
\draw [->,dashed,color=black] ( 1,-1) -- ( 0,-2);
\draw [->,dashed,color=black] (-2,-2) -- ( 0,-4);
\draw [->,dashed,color=black] ( 0,-2) -- ( 0,-4);
\end{tikzpicture}
}
\end{equation}

Notations in figure \eqref{n2 model structs} are as follows.
\begin{itemize}
\item In figure \eqref{n2 model structs}, each circle represents a single model structure. 
\item The ordered pair $m,n$ of integers at the top of each circle indicates which pair of factorization systems gives that model structure. This uses our numbering scheme for the (precisely 10) factorization systems on $\mathcal{P}_2$, given later in the paper, in \eqref{n2 fact systems}. For example, $0,9$ indicates the model structure whose acyclic-cofibration-and-then-fibration factorization system is number $0$ from figure \eqref{n2 fact systems}, and whose cofibration-and-then-acyclic-fibration factorization system is number $9$ from figure \eqref{n2 fact systems}. 
\item An asterisk is written as a subscript below the ordered pair $m,n$ if and only if the corresponding model structure is strong. 
\item Background colors are used as follows:
\begin{itemize}
\item a white background indicates a non-strong model structure,
\item a blue background indicates a model structure which is topological (hence also strong), 
\item an orange background indicates a model structure which is matroidal (hence also strong), 
\item a blue and orange background indicates a model structure which is both topological and matroidal,
\item and a red background indicates a model structure which is both geometric and topological.
\end{itemize}
\item The Boolean algebra $\mathcal{P}_2$ is the power set of a set $\{ a,b\}$. We may graphically depict the four elements and five non-identity morphisms in $\mathcal{P}_2$ with the diagram
\begin{equation}\label{diag 02300a}\xymatrix{
 & \{a,b\} & \\
\{a\} \ar[ur] & & \{b\} \ar[ul] \\
 & \emptyset . \ar[ur]\ar[uu]\ar[ul] & 
}\end{equation}
In each circle in \eqref{n2 model structs}, a miniature version of diagram \eqref{diag 02300a} is marked to indicate which morphisms are cofibrations ($c$), weak equivalences ($w$), or fibrations ($f$); for example, \tikzset{
 d79/.pic={
\draw[lightgray, very thin] (0,-1) -- (1,0) -- (0,1) -- (-1,0) -- cycle; 
\draw[lightgray, very thin] (0,-1) -- (0,1);
\path (-.5,0.5) node[color=black] {$cw$} ;
\path (.5,0.5) node[color=black] {$cf$} ;
\path (0,0) node[color=black] {$c$} ;
\path (-.5,-0.5) node[color=black] {$cf$} ;
\path (.5,-0.5) node[color=black] {$cw$};
}}
\begin{tikzpicture} [shorten <= 0.4cm,shorten >= 0.4cm,
 d79/.style={path picture={\pic at ([yshift=-0.1cm]path picture bounding box) {d79};}},
every pic/.style={scale=.2,every node/.style={scale=0.4}},
] 
\draw ( 0, 0) node[d79,minimum size=0.8cm] {};
\end{tikzpicture}
indicates the model structure in which all maps are cofibrations, $\emptyset\rightarrow \{ a\}$ and $\{b\}\rightarrow \{a,b\}$ are each also fibrations, and $\emptyset\rightarrow \{ b\}$ and $\{a\}\rightarrow \{a,b\}$ are each also weak equivalences. 
\item In each circle, the green dots mark those objects of $P(\{a,b\})$ which are bifibrant in the corresponding model structure. Since the homotopy category of a model structure on a preorder $\mathcal{A}$ is simply the full subcategory of $\mathcal{A}$ containing the bifibrant objects, the green dots tell us, for each model structure on $P(\{a,b\})$, what the homotopy category of that model structure is: namely, it is the full subcategory (i.e., the sub-poset) containing the objects of $P(\{a,b\})$ marked by the green dots.
\item A solid arrow from one model structure to another indicates that the latter is a Bousfield localization\footnote{Recall that, given model structures $\mathcal{M}_1,\mathcal{M}_2$ on a category $\mathcal{A}$, we say that $\mathcal{M}_2$ is a {\em Bousfield localization} (respectively, {\em Bousfield colocalization}) of $\mathcal{M}_1$ if $\mathcal{M}_1$ and $\mathcal{M}_2$ share the same class of cofibrations (respectively, fibrations), and the class of weak equivalences in $\mathcal{M}_1$ is contained in the class of weak equivalences of $\mathcal{M}_2$.} of the former; a dashed arrow indicates that the latter is a Bousfield colocalization (i.e., a ``right Bousfield localization'') of the former. We have only drawn a {\em minimal} such set of arrows; for example, we have a solid arrow from $(0,9)$ to $(1,9)$, and a solid arrow from $(1,9)$ to $(7,9)$, so it is implied that model structure $(7,9)$ is also a Bousfield localization of model structure $(0,9)$ without our needing to draw an additional arrow between them.
\item The model structure circled in red (i.e., $(0,9)$) is the discrete model structure, i.e., the model structure in which all maps are cofibrations and fibrations, and only identity maps are weak equivalences.
\item The color of the circles represents distance from the discrete model structure, in the following sense: the discrete model structure is circled in red. The model structures which can be obtained by a single localization {\em or} a single colocalization from the discrete model structure are colored in orange. The model structures which can be obtained by a length $2$ alternating sequence of Bousfield localizations and colocalizations are colored in yellow. There are no model structures on $P(\{a,b\})$ requiring alternating sequences of localizations and colocalizations of length $\geq 3$ for their construction, so no further colors are required.
\end{itemize}
Given a category $\mathcal{A}$, one could try to make a diagram in which we have one node for each model structure on $\mathcal{A}$, and in which we have arrows between nodes of one color to indicate which nodes are Bousfield localizations of which others, and arrows of some other color to indicate which nodes are Bousfield colocalizations of which others. Around 2014, the first author began calling such diagrams ``Bousfield quivers'' after calculating a few of them. The term first appeared in print in Corrigan-Salter's paper \cite{MR3513962}. Diagram \eqref{n2 model structs} is a Bousfield quiver, augmented with various decorations that are meaningful in the case of model structures on Boolean algebras. The first author posted \cite{moanswer} the Bousfield quiver \eqref{n2 model structs} in May 2025. An independent partial calcuation of the same Bousfield quiver was described in the November 2025 preprint \cite{leftandright}---see Figure 5 in \cite{leftandright}, where some (but not all) of the Bousfield localization and colocalizations are depicted.

Some structural features of the collection of model structures on a category can be read off from its Bousfield quiver. Some of these structural features are not at all obvious. For example, from \eqref{n2 model structs} we have the curious observation that every strong model structure on $\mathcal{P}_2$ is either topological or matroidal (or both), and that every geometric model structure on $\{a,b\}$ is also topological (and discrete). We see also that every model structure on $\mathcal{P}_2$ is obtainable from the discrete model structure by iterated Bousfield localization and colocalization---in fact, it suffices to take a sequence of length $2$, i.e., a single localization followed by a single colocalization, or the reverse\footnote{This has some overlap with an observation that was made in the very recent preprint \cite{leftandright}, although neither observation is directly a consequence of the other. In \cite[Theorem 4.10]{leftandright} it is shown that every model structure on the poset $[m]\times [n+1]$ {\em whose acyclic fibrations are retractile} (Definition \ref{retractile and sectile classes}) is obtainable from the discrete model structure by a finite sequence of Bousfield localizations and colocalizations.}. For short, we say that the Bousfield quiver of $\mathcal{P}_2$ is {\em connected}.

In \cref{Explicit examples}, we also provide the Bousfield quiver for $\mathcal{P}_1$ and for $\mathcal{P}_3$, although the latter can only be depicted in a highly condensed form, since it has 1026 vertices. The case of $\mathcal{P}_3$ begins to clarify which structural properties of the set of model structures on $\mathcal{P}_2$ are coincidental, and which ones perhaps generalize to model structures on other Boolean algebras (or perhaps other categories, more broadly):
\begin{itemize}
\item It is not true that every strong model structure on $\mathcal{P}_3$ is either topological or matroidal.
\item It is not true that the Bousfield quiver of $\mathcal{P}_3$ is connected. In \cref{P3}, we explain that there are 765 model structures in the connected component of the discrete model structure, i.e., ``the main component.'' The remaining 261 model structures are spread out across 219 connected components, each of which is structured in a very simple way, described in \cref{P3}. Essentially ``everything complicated'' happens in the main component. 
\item Curiously, all the model structures on $\mathcal{P}_3$ outside the main component are non-strong. Hence all the topological and matroidal model structures on $\mathcal{P}_3$ occur in the main component, i.e., each such model structure is obtainable by finitely many Bousfield localizations and colocalizations, starting from the discrete model structure.
\item Given a model structure $\mathcal{M}$ on a category $\mathcal{A}$, let us say that the {\em distance from the discrete model structure to $\mathcal{M}$} is the length of the shortest chain of Bousfield localizations and colocalizations which begins with the discrete model structure and ends at $\mathcal{M}$. Let us say that
the {\em radius of the main component in the Bousfield quiver of $\mathcal{A}$} is the supremum of the finite distances from the discrete model structure to other model structures on $\mathcal{A}$. From our calculations in \cref{Explicit examples}, we observe that, for $n\in \{1,2,3\}$, the radius of the main component in the Bousfield quiver of $\mathcal{P}_n$ is precisely $n$. We wonder if this is true for larger values of $n$. An explicit calculation in the case $n=4$ is out of reach without improved algorithms---see discussion in \cref{P of ab}.
\end{itemize}

Boolean algebras equipped with idempotent (co)monads arise in the usual algebraic semantics for modal propositional logic (see \cite[chapter 10]{MR1858927} for a nice textbook treatment). The first author wrote an appendix to this paper about the modal-logical interpretation of model structures on Boolean algebras, using the results of this paper. The audience for that appendix would presumably be the set of mathematicians who are interested in both model categories and modal logic. We are unsure whether that set of mathematicians contains any elements which are not on the list of authors of this paper, so the first author has elected to present that appendix as a separate download\footnote{\url{https://asalch.wayne.edu/modal_appendix_1.pdf}} from his webpage, rather than as part of the paper itself. 

The paper is organized as follows. In \cref{basic properties}, we lay out basic properties of model structures on preorders, and define retractile and sectile factorization systems. The main theoretical results appear in \cref{model structures from compatible pairs}, where we prove Theorems \ref{characterization thm} and \ref{CPs induce model structures}. 
In \cref{factorization bracket pairs section}, we give an equivalent description of model structures on a preorder in terms of factorization–bracket pairs. This definition is more suitable for computational implementation. As an application for these results, we describe topological and matroidal model structures on finite Boolean algebras in \cref{Strong model structures on finite Boolean algebras}. Finally, in \cref{Explicit examples}, we describe the Bousfield quiver of all model structures on a finite Boolean algebra and record several basic observations about such quivers.

\subsection{Prior related work}
\label{Previous related work}

Here are a few earlier papers on the problem of enumerating the model structures or factorization systems on certain preorders. None of these papers share the present paper's method of constructing and classifying model structures via their (co)fibrant replacement (co)monads, nor our focus on calculating the model structures on Boolean algebras, so while there is some overlap in broad subject matter, there is not much overlap in methods, results, or considered examples.
\begin{itemize}
    \item Section 5 of Droz and Zakharevich's paper \cite{MR4226148} gives a classification of the model structures on a poset, but only up to Quillen equivalence. 
It is a ``classification'' in the sense that a Quillen equivalence class of model structures on a given poset $\mathcal{A}$ is determined by specifying the class of weak equivalences, so if one specifies the sub-posets of $\mathcal{A}$ which are capable of being the weak equivalences of a model structure, then one has specified a list of Quillen-equivalence classes of model structures on $\mathcal{A}$.

A classification of model structures on a poset $\mathcal{A}$ ``on the nose,'' i.e., not up to Quillen equivalence, is obtained from the results of Droz--Zakharevich \cite{MR4226148}, Beke's 2003 preprint \cite{bekenotes}, and Jardine's 2006 paper \cite{MR2252262}, as follows. Given a class $\mathcal{W}$ which is capable of being the class of weak equivalences of a model structure on $\mathcal{A}$, a very special case of Beke's theorem \cite[Proposition 2.1]{bekenotes}, and its dual, shows that there is a unique smallest set $\mathcal{F}_{min}$ (respectively, largest set $\mathcal{F}_{max}$) of maps in $\mathcal{A}$ such that there is a model structure on $\mathcal{A}$ whose fibrations are $\mathcal{F}_{min}$ (resp. $\mathcal{F}_{max}$) and whose weak equivalences are $\mathcal{W}$. Jardine's theorem (attributed to \cite{MR2252262}, but only implicit there; see \cite{nlab:intermediate_model_structure} for a more direct statement and discussion) then establishes that, given any factorization system $(\mathcal{L},\mathcal{R})$ on $\mathcal{A}$ whose right class $\mathcal{R}$ satisfies $\mathcal{F}_{min}\subseteq \mathcal{R}\subseteq\mathcal{F}_{max}$, there exists a (necessarily unique) model structure on $\mathcal{A}$ whose weak equivalences are $\mathcal{W}$ and whose fibrations are $\mathcal{R}$. Hence, by the results of Droz--Zakharevich and Beke and Jardine, each model structure on $\mathcal{A}$ is describable uniquely as a pair $(\mathcal{W},\mathcal{R})$, where $\mathcal{W}$ is a set of morphisms in $\mathcal{A}$ which is capable of extending to a model structure, while $\mathcal{R}$ is the right class of a factorization system on $\mathcal{A}$ satisfying $\mathcal{F}_{min}\subseteq\mathcal{R}\subseteq\mathcal{F}_{max}$.

In this paper, we use some results from Droz--Zakharevich, particularly in \cref{The Droz-Zakharevich and Stanculescu...}. However, our focus in this paper is on a means of classifying model structures on a poset $\mathcal{A}$ in terms of their cofibrant and fibrant objects, rather than the approach yielded by Droz--Zakharevich--Beke--Jardine, which is in terms of the weak equivalences and the fibrations.  For some purposes (i.e., building model structures from topologies and from matroids, as we do in \cref{Model structures constructed from...}), it is much more convenient to describe a model structure by means of cofibrant and fibrant objects than to give a description in terms of weak equivalences. It is also technically more difficult to use a characterization in terms of weak equivalences, since Droz--Zakharevich prove \cite[Theorem A]{MR4226148} that there is no characterization {\em in the language of first-order logic} of which sets of morphisms in $\mathcal{A}$ are the weak equivalences of some model structure on $\mathcal{A}$. By contrast, our Corollary \ref{lettered cor b} {\em is} a first-order characterization of which pairs $(\mathcal{F},\mathcal{C})$ of subsets of $\mathcal{A}$ admit a model structure on $\mathcal{A}$ with $\mathcal{F}$ as fibrant objects and with $\mathcal{C}$ as cofibrant objects.
\item The two preprints \cite{characterizingmodelstructures}, by Mazur--Osorno--Roitzheim--Santhanam--van Niel--Zapata Castro, and \cite{leftandright}, by Carnero Bravo--Goyal--Mart\'{i}nez Alberga--Ng--Roitzheim--Tolosa, are very recent, posted less than a month before this paper. The former gives a classification of model structures which is of the same type as provided by Droz--Zakharevich, Beke, and Jardine---i.e., a classification in terms of pairs $(\mathcal{W},\mathcal{R})$---but with more explicit, usable criteria for determining which sets of morphisms $\mathcal{W}$ and $\mathcal{R}$ satisfy the conditions of Droz--Zakharevich--Beke--Jardine. 
The latter paper, \cite{leftandright}, focuses on identifying minimal sets of morphisms which can be inverted to yield Bousfield (co)localizations of model structures on posets.
Some of the present paper's explicit calculations of factorization systems and of model structures on the $4$-element Boolean algebra also appear in those two preprints. C. Roitzheim suggests (personal communication) that it is worth stressing that the perspective of (co)transfer systems, taken in those two preprints, yields a different methodology and usability than the use of factorization systems.
\item Stanculescu's paper \cite{MR3274498} constructs a model structure with prescribed fibrant objects. The primary thrust of the present paper is that we construct model structures on preorders in which both the fibrant {\em and} the cofibrant objects are prescribed. Hence our results solve a problem which is in one way more difficult than that of \cite{MR3274498} (we solve a problem with more constraints that must be satisfied) but in another way is easier (we work only with preorders, which simplifies the setting). We give a thorough discussion of Stanculescu's main construction from \cite{MR3274498}, and its relationship to our constructions, in \cref{The Droz-Zakharevich and Stanculescu...}.
    \item The paper \cite{MR4467021} of Franchere--Ormsby--Osorno--Qin--Waugh studies the transfer systems on a finite group, which are equivalent to weak factorization systems on the subgroup lattice of the group. The relationship between transfer systems and weak factorization systems on a finite poset is also elucidated in \cite{MR4693627}.  
    \item In the paper \cite{MR4601189} of Balchin--Ormsby--Osorno--Roitzheim, the model structures on a finite linear {\em total} order $[n]$ were enumerated using transfer systems. 

    \item The focus of the paper \cite{MR4667584} of Balchin--MacBrough--Ormsby is on explicitly identifying the model structures on a finite total order $[n]$ among the ``Tamari intervals.'' 
    \item The paper \cite{BCH+25} of Bose--Chih--Housden--Jones--Lewis--Ormsby--Rose deals with the enumeration of factorization systems rather than model structures. There is significant overlap between \cite{BCH+25} and the review material we present in \cref{All strong model structures...}, but our new results in this paper do not overlap with results of \cite{BCH+25}. 
    \item To avoid any potential confusion: the paper \cite{MR2721035} of Raptis is about model structures on the category of posets, which is an entirely different matter from a model structure on an {\em individual} poset (or preorder), which is the subject of this paper.
\end{itemize}

\subsection{Acknowledgments}

The first author thanks I. Zakharevich for a conversation about the paper \cite{MR4226148}, J. Beardsley for discussions on matroids, and for suggesting the book \cite{MR1783451}, and C. Roitzheim for a discussion about the preprints \cite{leftandright} and \cite{characterizingmodelstructures}. The second author thanks Joey Beauvais–Feisthauer for helpful suggestions regarding the Sage script used to count the strong model structures. We used Wayne State University's high-performance computing grid for some calculations described in this paper.

\subsection{Conventions}

\begin{itemize}
\item Given a class of morphisms $S$ in some category $\mathcal{A}$, we follow Hovey \cite{MR1650134} in writing $\Proj(S)$ (respectively, $\Inj(S)$) for the class of morphisms with the left (respectively, right) lifting property with respect to $S$.
\item There are no set-theoretic tricks or surprises in this paper: all our preorders are preorders in the standard sense in mathematics. In particular, a preorder has only a {\em set} of objects.
\item In this paper, we do not assume that our model categories have {\em all} (small) limits and colimits. Rather, we adopt Quillen's original convention \cite{MR0223432} that a model category is a category with {\em finite} limits and colimits which is equipped with a model structure.
\item Breaking with Quillen's conventions from \cite{MR0223432}, we adhere to the modern convention that all model structures are assumed to be closed, i.e., the pairs $(cof, we\cap fib)$ and $(cof\cap we, fib)$ are each weak factorization systems\footnote{Recall that a \emph{weak factorization system} on a category $\mathcal{A}$ consists of a pair $(\mathcal{L}, \mathcal{R})$ of full subcategories of $\mathcal{A}$ such that every morphism in $\mathcal{A}$ can be factored as a map in $\mathcal{L}$ followed by a map in $\mathcal{R}$, such that $\mathcal{L} = \Proj(\mathcal{R})$, and such that $\mathcal{R} = \Inj(\mathcal{L})$.}.
\item In this paper, a {\em model structure} on a category $\mathcal{A}$ consists of an ordered triple $(cof, we, fib)$ of retract-closed full subcategories $\mathcal{A}$ such that $(cof \cap we, fib)$ and $(cof,we \cap fib)$ are each weak factorization systems, and such that $we$ has the two-out-of-three property. 

A model category, by our stated conventions, is consequently a finitely bicomplete category equipped with a model structure. To be clear, if we only write that ``$\mathcal{A}$ is a category equipped with a model structure,'' we are {\em not} asserting that $\mathcal{A}$ has any particular limits or colimits at all. Hence ``category with model structure'' is weaker than ``model category.''
\item In this paper, we will very frequently be concerned with model structures on preorders which do not necessarily have all finite limits or colimits, but which at least have an initial object and a final object. It will be convenient to have a short phrase for such things. By a ``model preorder'' we will mean a preorder with an initial object and a final object, and which is equipped with a model structure.
\end{itemize}

\section{Basic observations about model structures on preorders}
\label{basic properties}

All the results in this section are well-known to those who have written about model structures on preorders,
and the results are each proven by short, routine arguments, so we omit most of the proofs. A few of these results also have known generalizations from preorders to categories enriched in posets, by \cite{MR4140153}.

\subsection{Every model preorder yields a pair of (co)reflective subcategories}

\begin{prop}\label{factorization systems are functorial}
Let $\mathcal{A}$ be a preorder. Then every weak factorization system for a closed model structure on $\mathcal{A}$ is an orthogonal factorization system.
Furthermore, the factorizations in any closed model structure on $\mathcal{A}$ are uniquely determined, up to isomorphism. Consequently, in any model structure on a preorder, the factorization systems are functorial.
\end{prop}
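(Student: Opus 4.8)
The plan is to lean entirely on the defining feature of a preorder: for any objects $B,X$ the hom-set $\hom_{\mathcal{A}}(B,X)$ has at most one element, so any two parallel morphisms coincide and every diagram in $\mathcal{A}$ commutes automatically. First I would dispose of the orthogonality claim. Given a weak factorization system $(\mathcal{L},\mathcal{R})$ arising from the model structure and a lifting problem whose left leg $i$ lies in $\mathcal{L}$ and whose right leg $p$ lies in $\mathcal{R}$, existence of a solution is exactly the statement $\mathcal{L}=\Proj(\mathcal{R})$. But any two solutions are morphisms with the same source and target, hence equal because $\mathcal{A}$ is a preorder. So the solution is unique, and $(\mathcal{L},\mathcal{R})$ is orthogonal.

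Next I would prove uniqueness of factorizations up to isomorphism. Fix $f\colon X\to Y$ and suppose $X\xrightarrow{i_1}Z_1\xrightarrow{p_1}Y$ and $X\xrightarrow{i_2}Z_2\xrightarrow{p_2}Y$ are two factorizations with $i_1,i_2\in\mathcal{L}$ and $p_1,p_2\in\mathcal{R}$. The square with left leg $i_1$, right leg $p_2$, top edge $i_2$, and bottom edge $p_1$ commutes automatically, so the lifting property yields a morphism $Z_1\to Z_2$, i.e.\ $Z_1\leq Z_2$. Exchanging the roles of the two factorizations gives $Z_2\leq Z_1$, whence $Z_1\cong Z_2$. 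Thus the middle object of a factorization is determined up to isomorphism.

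For functoriality, the key observation is that the arrow category $\mathcal{A}^{\rightarrow}$ is again a preorder: a morphism of arrows is pinned down by its two component morphisms, each of which is unique, so there is at most one morphism between any two objects of $\mathcal{A}^{\rightarrow}$. A functor out of a preorder is merely a monotone map, so to build a functorial factorization $E\colon\mathcal{A}^{\rightarrow}\to\mathcal{A}$ it suffices to choose, for each morphism $f\colon X\to Y$, one factorization $X\xrightarrow{i_f}E(f)\xrightarrow{p_f}Y$ (one exists by the weak-factorization-system axiom) and then to verify monotonicity on objects. Given a morphism $f\to g$ of arrows, that is, $X\leq X'$ and $Y\leq Y'$ over $f\colon X\to Y$ and $g\colon X'\to Y'$, I would set up the lifting problem with left leg $i_f\in\mathcal{L}$, right leg $p_g\in\mathcal{R}$, top edge the composite $X\leq X'\xrightarrow{i_g}E(g)$, and bottom edge the composite $E(f)\xrightarrow{p_f}Y\leq Y'$; the unique lift furnishes $E(f)\leq E(g)$, which is exactly the required monotonicity. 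The accompanying natural transformations $\mathrm{dom}\Rightarrow E\Rightarrow\mathrm{cod}$, with components $i_f$ and $p_f$, are natural for free, since every naturality square in a preorder commutes.

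The argument is routine throughout; the one point demanding genuine care is the third step, where one must arrange the lifting problem so that its unique solution realizes the comparison $E(f)\leq E(g)$. Once that is done, the recognition that $\mathcal{A}^{\rightarrow}$ is a preorder (so that functoriality reduces to object-level monotonicity) and that uniqueness of lifts makes the functor axioms and naturality conditions hold automatically—rather than requiring hand verification—is what turns the statement into an immediate consequence of the earlier uniqueness claim, as the word ``consequently'' in the proposition suggests.
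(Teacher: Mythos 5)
Your proposal is correct and follows essentially the same route as the paper's proof: orthogonality and the functor axioms are immediate from the fact that a preorder has at most one morphism between any two objects, uniqueness of the factorization object comes from the two symmetric lifting problems giving $Z_1\leq Z_2\leq Z_1$, and the comparison map $E(f)\leq E(g)$ comes from the lifting problem with left leg $i_f\in\mathcal{L}$ and right leg $p_g\in\mathcal{R}$. Your observation that the arrow category of a preorder is again a preorder is a clean way to package why functoriality reduces to monotonicity, but it is the same argument in substance.
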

In light of Proposition \ref{factorization systems are functorial}, there is no difference between weak factorization systems and orthogonal factorization systems on preorders. Consequently, whenever there is no risk of ambiguity, we will use the phrase ``factorization system'' to mean both ``weak factorization system'' and ``orthogonal factorization system.''

\begin{corollary}\label{retractile and sectile cor}
    Let $\mathcal{A}$ be a preorder equipped with a factorization system $(\mathcal{L},\mathcal{R})$. Then $X \stackrel{f}{\longrightarrow} Y$ is in $\mathcal{R}$ if and only if, when $f$ is factored as a map in $\mathcal{L}$ followed by a map in $\mathcal{R}$, the map in $\mathcal{L}$ is an isomorphism.
The dual statement holds as well.
\end{corollary}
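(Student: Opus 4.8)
The plan is to work entirely inside the order-theoretic dictionary for preorders: a morphism $U \to V$ in $\mathcal{A}$ is just a witness of $U \leq V$, every square in $\mathcal{A}$ commutes automatically, and a solution to any lifting problem is merely a witness of one further inequality. First I would fix a factorization $f = p \circ i$ of $f \colon X \to Y$, with $i \colon X \to Z$ in $\mathcal{L}$ and $p \colon Z \to Y$ in $\mathcal{R}$, so that $X \leq Z \leq Y$. Under this dictionary, ``$i$ is an isomorphism'' means precisely that $Z \leq X$ holds in addition to the already-available $X \leq Z$, so the whole corollary reduces to comparing the single inequality $Z \leq X$ with membership $f \in \mathcal{R} = \Inj(\mathcal{L})$. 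Since both directions will be checked for an arbitrary such factorization, I need not invoke uniqueness of factorizations, though it does hold up to isomorphism by the lifting argument behind Proposition \ref{factorization systems are functorial}.

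For the forward direction I would assume $f \in \mathcal{R}$ and test $f$ against $i$ itself. The square with top edge $\mathrm{id}_X$, left edge $i$, right edge $f$, and bottom edge $p$ commutes because $p \circ i = f$; as $i \in \mathcal{L}$ and $f$ has the right lifting property against $\mathcal{L}$, there is a lift $Z \to X$. That lift is exactly a witness of $Z \leq X$, whence $X \cong Z$ and $i$ is an isomorphism.

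For the reverse direction I would assume $i$ is an isomorphism, so that $Z \leq X$ via $i^{-1}$, and show that $f$ has the right lifting property against an arbitrary $g \colon A \to B$ in $\mathcal{L}$. Given any lifting problem of $g$ against $f$ with top edge $u \colon A \to X$, I would transport it along $i$ to a lifting problem of $g$ against $p$, whose top edge is the composite $A \xrightarrow{u} X \xrightarrow{i} Z$; this square commutes automatically. Because $p \in \mathcal{R}$, it admits a lift $B \to Z$, i.e.\ $B \leq Z$, and composing with $Z \leq X$ yields $B \leq X$, the lift we need. Hence $f \in \mathcal{R}$. The dual statement is then obtained by applying the equivalence just proved to the factorization system $(\mathcal{R}^{\mathrm{op}}, \mathcal{L}^{\mathrm{op}})$ on the opposite preorder $\mathcal{A}^{\mathrm{op}}$.

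I expect no serious obstacle, since the preorder hypothesis trivializes both commutativity of squares and uniqueness of lifts; the one point requiring genuine care is the reverse direction, where one must remember that $f \in \mathcal{R}$ demands the right lifting property against \emph{all} of $\mathcal{L}$, not merely against $i$, which is exactly why the transport-along-$i^{-1}$ step carries the substantive content. If one instead prefers to cite the standard facts that $\Inj(\mathcal{L})$ contains every isomorphism and is closed under composition, the reverse direction collapses at once to $f = p \circ i \in \mathcal{R}$; I would note this as the quick alternative but present the self-contained lifting argument so as not to lean on closure properties that are not established in the compiled text.
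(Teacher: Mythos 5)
Your proof is correct and follows essentially the same route as the paper's: the paper's (omitted) argument gets the forward direction by comparing the given factorization with the trivial factorization $X \xrightarrow{\mathrm{id}} X \xrightarrow{f} Y$ via uniqueness of factorizations, which is just the lifting argument you perform directly, and treats the reverse direction as immediate from the closure properties of $\mathcal{R}$, which you verify by hand. Both directions are the same routine preorder lifting computations, so there is nothing to change.
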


Definition \ref{retractile and sectile classes} is not standard, but it is also not new: the terms ``retractile'' and ``sectile'' were introduced, with the same meanings, in \cite{MR3775351}.
\begin{definition}\label{retractile and sectile classes}
    A class $\mathcal{R}$ of morphisms in a category is {\em retractile} if, whenever $g\circ f$ is in $\mathcal{R}$, the morphism $f$ is also in $\mathcal{R}$. Dually, a class $\mathcal{S}$ of morphisms is {\em sectile} if, whenever $g\circ f$ is in $\mathcal{S}$, the morphism $g$ is also in $\mathcal{S}$.
\end{definition}

\begin{prop}\label{fibrations are retractile}
In any factorization system $(\mathcal{L},\mathcal{R})$ on a preorder, $\mathcal{R}$ is retractile. Dually, $\mathcal{L}$ is sectile.
\end{prop}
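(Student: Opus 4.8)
The plan is to prove that $\mathcal{R}$ is retractile directly from the defining property $\mathcal{R}=\Inj(\mathcal{L})$ of a factorization system, exploiting the fact that in a preorder any two parallel morphisms coincide. Suppose $f\colon X\to Y$ and $g\colon Y\to Z$ are such that $g\circ f\in\mathcal{R}$; I want to conclude that $f\in\mathcal{R}$, and since $\mathcal{R}=\Inj(\mathcal{L})$ it suffices to check that $f$ has the right lifting property with respect to every member of $\mathcal{L}$.

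So I would fix $l\colon A\to B$ in $\mathcal{L}$ together with a commutative square whose left edge is $l$, whose right edge is $f$, and whose top and bottom edges are maps $a\colon A\to X$ and $b\colon B\to Y$. First I would post-compose the bottom edge with $g$ to produce a new commutative square with the same left edge $l$ but with right edge $g\circ f$ and bottom edge $g\circ b$; this square commutes automatically because $\mathcal{A}$ is a preorder. Since $g\circ f\in\mathcal{R}=\Inj(\mathcal{L})$ and $l\in\mathcal{L}$, this new square admits a diagonal filler $h\colon B\to X$.

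The key step is then to observe that this same $h$ solves the original lifting problem. Indeed, the two required triangles commute for free: $h\circ l$ and $a$ are parallel morphisms $A\to X$, hence equal, and $f\circ h$ and $b$ are parallel morphisms $B\to Y$, hence equal. This is exactly where the preorder hypothesis is essential: in a general category one would need $g$ to be a monomorphism in order to pass from $g\circ f\circ h=g\circ b$ back to $f\circ h=b$. It follows that $f$ lies in $\Inj(\mathcal{L})=\mathcal{R}$, so $\mathcal{R}$ is retractile. The dual statement, that $\mathcal{L}$ is sectile, follows by the formally dual argument: given $g\circ f\in\mathcal{L}$, one pre-composes with $f$ to turn any lifting problem for $g$ into one for $g\circ f$, and uses $\mathcal{L}=\Proj(\mathcal{R})$.

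I expect essentially no serious obstacle here; the only thing to be careful about is bookkeeping the directions of the lifting properties and invoking the ``free'' commutativity of the triangles correctly. As an alternative route, one could run the argument through Corollary \ref{retractile and sectile cor}: factor $f=r\circ l$ with $l\in\mathcal{L}$ and $r\in\mathcal{R}$, factor $g\circ r$ further, and use closure of $\mathcal{L}$ under composition to exhibit an $(\mathcal{L},\mathcal{R})$-factorization of $g\circ f$ whose left factor must be an isomorphism; in a preorder this forces $l$ itself to be an isomorphism, whence $f\in\mathcal{R}$ by the corollary.
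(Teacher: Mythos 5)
Your argument is correct and is essentially the same as the paper's own proof: both reduce a lifting problem of $l\in\mathcal{L}$ against $f$ to one against $g\circ f$ by composing the bottom edge with $g$, and then use the fact that in a preorder any morphism $B\to X$ automatically makes both triangles commute. Your observation about where the preorder hypothesis enters (avoiding the need for $g$ to be monic) and the alternative route via Corollary~\ref{retractile and sectile cor} are both accurate, but the core argument coincides with the paper's.
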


Given a factorization system $(\mathcal{L},\mathcal{R})$ on a preorder, 
Proposition \ref{fibrations are retractile} establishes that $\mathcal{L}$ is sectile. It may or may not be the case, however, that $\mathcal{L}$ is also retractile. It will be useful (e.g. in Proposition \ref{sectile fibs prop}, and Proposition \ref{strong model str satisfies strong compatibility}) to have a name for factorization systems with such properties. We will call such factorization systems ``retractile,'' as follows:
\begin{definition}\label{def of retractile fs}
Let $(\mathcal{L},\mathcal{R})$ be a factorization system on a category. We will say that $(\mathcal{L},\mathcal{R})$ is {\em retractile} (respectively, {\em sectile}) if $\mathcal{L}$ is retractile (respectively, $\mathcal{R}$ is sectile).
\end{definition}

\begin{remark} 
From the original topological motivations for model categories, we are used to the idea that fibrations ought to behave something like surjections, like how Serre fibrations and Hurewicz fibrations are surjective. Surjections (or more generally, epimorphisms), however, are generally not retractile; instead, it is the {\em monomorphisms} in any category that are retractile. So we see that our topological intuition about model structures does not serve us perfectly well when considering model structures on preorders.

The category of sets admits a model structure in which the cofibrations are the surjections, all maps are weak equivalences, and the fibrations are the injections. In this model structure, the fibrations are retractile. One moral of Proposition \ref{fibrations are retractile} is that model structures on preorders behave, in a particular sense, like this model structure on the category of sets, rather than behaving like the Serre model structure on topological spaces.
\end{remark}

\begin{notation}
    Suppose we are given a preorder $\mathcal{A}$ equipped with a model structure. For any morphism $X \xrightarrow{f} Y$ in $\mathcal{A}$ , we will write $\Phi(X,Y)$ for the element of $\mathcal{A}$, unique up to isomorphism, such that $X \rightarrow \Phi(X,Y) \rightarrow Y$ is the acyclic-cofibration-followed-by-fibration factorization of $f$. Furthermore, we will write $\Psi(X,Y)$ for the element of $\mathcal{A}$, unique up to isomorphism, such that $X \rightarrow \Psi(X,Y) \rightarrow Y$ is the cofibration-followed-by-acyclic-fibration factorization of $f$.
\end{notation}

\begin{prop}\label{identification of replacements}
Let $\mathcal{A}$ be a preorder equipped with a model structure.
\begin{itemize}
\item Suppose that $\mathcal{A}$ has an initial object $0$. Then, for each cofibrant object $X$ of $\mathcal{A}$ and each morphism $f: X\rightarrow Y$ in $\mathcal{A}$, the object 
$\Psi(X,Y)$ is isomorphic to $\Psi(0,Y)$. Furthermore, $\Psi(0,0)$ is initial.
\item
Dually, suppose that $\mathcal{A}$ has a terminal object $1$. Then, for each fibrant object $Y$ of $\mathcal{A}$ and each morphism $f: X\rightarrow Y$ in $\mathcal{A}$, the object 
$\Phi(X,Y)$ is isomorphic to $\Phi(X,1)$.  Furthermore, $\Phi(1,1)$ is terminal.
\end{itemize}
\end{prop}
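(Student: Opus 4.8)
The plan is to reduce everything to two standard facts about the cofibration-followed-by-acyclic-fibration factorization system $(\cof,\we\cap\fib)$: first, that its factorizations are unique up to isomorphism (Proposition \ref{factorization systems are functorial}); and second, that its left class $\cof$ is closed under composition and contains all isomorphisms, both of which are immediate from the characterization of $\cof$ as the class of maps with the left lifting property against $\we\cap\fib$. By duality, the statements about $\Phi$ follow from the analogous facts about the acyclic-cofibration-followed-by-fibration factorization system, so I would prove only the claims about $\Psi$ in detail and then remark that the argument for $\Phi$ is obtained by reversing all arrows.

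For the first claim, suppose $X$ is cofibrant and $f\colon X\to Y$ is a morphism, and consider the factorization $X\to \Psi(X,Y)\to Y$, in which $X\to\Psi(X,Y)$ is a cofibration and $\Psi(X,Y)\to Y$ is an acyclic fibration. Since $X$ is cofibrant, the unique map $0\to X$ is a cofibration; composing it with the cofibration $X\to\Psi(X,Y)$ and using that $\cof$ is closed under composition, the map $0\to\Psi(X,Y)$ is a cofibration. Hence $0\to\Psi(X,Y)\to Y$ exhibits $0\to Y$ as a cofibration followed by an acyclic fibration, so by the uniqueness of such factorizations (Proposition \ref{factorization systems are functorial}) we get $\Psi(X,Y)\cong\Psi(0,Y)$, as desired. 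For the statement that $\Psi(0,0)$ is initial, I would note that the identity $0\to 0$ admits the factorization $0\xrightarrow{\mathrm{id}}0\xrightarrow{\mathrm{id}}0$, whose first map is a cofibration and whose second map is an acyclic fibration (an isomorphism lies in both classes and is in particular a weak equivalence). Uniqueness of factorizations then gives $\Psi(0,0)\cong 0$, and any object isomorphic to the initial object is itself initial.

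The second bullet is proved by the formally dual argument, working with the terminal object $1$, the fibrant object $Y$ (for which $Y\to 1$ is a fibration), and the acyclic-cofibration-followed-by-fibration factorization system $(\cof\cap\we,\fib)$, whose right class $\fib$ is closed under composition and contains all isomorphisms. There is no serious obstacle here: the only points requiring care are the verification that the relevant composite lies in the correct class of the factorization system, and the correct invocation of uniqueness up to isomorphism. Both are routine once the closure properties of the (co)fibration classes are in hand, so the entire proposition is genuinely a short, formal consequence of Proposition \ref{factorization systems are functorial} together with these elementary closure properties.
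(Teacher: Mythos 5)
Your proof is correct and follows essentially the same route as the paper's: both hinge on the facts that $0\to X\to\Psi(X,Y)$ is a composite of cofibrations and that factorizations in a preorder are unique up to isomorphism. The only cosmetic difference is that you invoke Proposition \ref{factorization systems are functorial} to compare the two factorizations of $0\to Y$, whereas the paper constructs the two comparison maps directly by running the lifting axiom in each direction; these amount to the same argument.
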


The object $\Psi(0,X)$ is the {\em cofibrant replacement of $X$}, while $\Phi(X,1)$ is the {\em fibrant replacement of $X$}. In light of Proposition \ref{factorization systems are functorial}, cofibrant replacement is functorial and well-defined up to isomorphism, and similarly for fibrant replacement. Proposition \ref{monadic replacements} goes further, and shows that cofibrant and fibrant replacements are not only functors, but are in fact (co){\em monads}:

\begin{prop}\label{monadic replacements}
Let $\mathcal{A}$ be a model preorder, i.e., a preorder with initial and terminal object and which is equipped with a model structure. Then the fibrant replacement functor $F: \mathcal{A}\rightarrow\mathcal{A}$ on $\mathcal{A}$ has the structure of a monad; any two choices of monad structure on $F$ are isomorphic; and any monad structure on $F$ is idempotent. 
\end{prop}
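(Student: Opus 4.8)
The plan is to exploit the fact that $\mathcal{A}$ is a preorder as aggressively as possible, so that all of the monad axioms become automatic and the only genuine content is the construction of the unit and the multiplication. First I would recall that a functor between preorders is the same thing as a monotone map, and that between two such functors $G,H \colon \mathcal{A}\to\mathcal{A}$ there is at most one natural transformation $G\Rightarrow H$: it exists precisely when $G(X)\leq H(X)$ for all $X$, and is automatically natural because $\mathcal{A}$ has at most one morphism between any two objects. Consequently a monad structure on $F$ amounts to exactly the two pointwise inequalities $X\leq F(X)$ (giving the unit $\eta$) and $F(F(X))\leq F(X)$ (giving the multiplication $\mu$), and the associativity and unit axioms hold for free, since each is an equation between parallel morphisms in $\mathcal{A}$ and any two parallel morphisms in a preorder coincide. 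This same observation immediately yields the uniqueness clause of the proposition: once $\eta$ and $\mu$ exist they are forced, so $F$ carries at most one monad structure, whence any two are (trivially) isomorphic.

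The first real step is to produce the unit. Since $F(X)=\Phi(X,1)$, the acyclic-cofibration-followed-by-fibration factorization of $X\to 1$ supplies a map $X\to\Phi(X,1)$, i.e.\ $X\leq F(X)$; this is $\eta_X$. That $F$ is a functor (monotone) is exactly the functoriality of factorizations established in Proposition \ref{factorization systems are functorial}.

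The second, and really only substantive, step is the multiplication. I would first observe that $F(X)$ is fibrant, since the fibration $\Phi(X,1)\to 1$ appearing in the factorization of $X\to 1$ exhibits $F(X)$ as fibrant. Next I would show that $F$ fixes fibrant objects up to isomorphism: if $Y$ is fibrant then $Y\to 1$ is already a fibration, so when we factor it as an acyclic cofibration followed by a fibration, Corollary \ref{retractile and sectile cor} forces the acyclic cofibration $Y\to\Phi(Y,1)$ to be an isomorphism, whence $F(Y)\cong Y$. Applying this with $Y=F(X)$ gives $F(F(X))\cong F(X)$, which in particular provides the inequality $F(F(X))\leq F(X)$ defining $\mu$. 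This identification of fibrant objects as the fixed points of $F$ up to isomorphism is the one step carrying any mathematical content; everything else is an instance of the slogan that coherence in a preorder is automatic.

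Finally, for idempotence I would use the standard fact that a monad is idempotent exactly when its multiplication is an isomorphism. Here $\mu_X\colon F(F(X))\to F(X)$ is a morphism between two isomorphic objects of $\mathcal{A}$, and in a preorder any morphism between isomorphic objects is itself an isomorphism: its inverse exists by the reverse inequality, and both composites are the unique endomorphisms of their objects, hence identities. Thus $\mu$ is an isomorphism and the monad is idempotent, completing the argument.
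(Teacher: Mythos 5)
Your proof is correct and follows essentially the same route as the paper's: the unit comes from the acyclic-cofibration-followed-by-fibration factorization of $X\to 1$, the multiplication comes from the fibrancy of $FX$, and the uniqueness, coherence, and idempotence claims are all automatic because $\mathcal{A}$ is a preorder. The only (immaterial) difference is that you obtain $FFX\leq FX$ by applying Corollary \ref{retractile and sectile cor} to the fibration $FX\to 1$, whereas the paper derives the same inequality by solving the lifting problem that pits the acyclic cofibration $FX\to FFX$ against the fibration $FX\to 1$.
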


\begin{corollary}\label{identification of fibrant objects}
Let $\mathcal{A}$ be a model preorder. 
An object $X$ of $\mathcal{A}$ is fibrant if and only if 
the unit map $\eta(X) : X\rightarrow FX$ of the fibrant replacement monad is an isomorphism. 
\end{corollary}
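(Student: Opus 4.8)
The plan is to prove the two implications separately, using the lifting axiom in the ``only if'' direction and closure of fibrant objects under isomorphism in the ``if'' direction. Throughout I will use the facts, recorded in the notation preceding Proposition \ref{monadic replacements} and in Proposition \ref{monadic replacements} itself, that the unit $\eta(X)\colon X\rightarrow FX$ is an acyclic cofibration and that $FX$ is fibrant for every object $X$ of $\mathcal{A}$.

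First I would handle the forward implication. Suppose $X$ is fibrant, so that the canonical map $X\rightarrow 1$ to the terminal object is a fibration. I would then set up the lifting problem whose outer commutative square has $\eta(X)\colon X\rightarrow FX$ down the left, the identity $\operatorname{id}_X\colon X\rightarrow X$ along the top, the map $FX\rightarrow 1$ along the bottom, and $X\rightarrow 1$ down the right; commutativity is automatic since $1$ is terminal. Here the left map is an acyclic cofibration and the right map is a fibration, so the lifting axiom yields a morphism $r\colon FX\rightarrow X$ with $r\circ\eta(X)=\operatorname{id}_X$. At this point the essential point is that $\mathcal{A}$ is a preorder: the mere existence of the two morphisms $\eta(X)\colon X\rightarrow FX$ and $r\colon FX\rightarrow X$ already forces $X\cong FX$, and hence $\eta(X)$ is an isomorphism. (This is the same ``two maps in opposite directions in a preorder'' device used in the proofs of Propositions \ref{factorization systems are functorial} and \ref{monadic replacements}.)

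For the reverse implication, suppose $\eta(X)\colon X\rightarrow FX$ is an isomorphism. Since $FX$ is fibrant and the class of fibrant objects in any model structure is closed under isomorphism, $X$ is fibrant. The only point requiring any care is the assertion that fibrant objects are closed under isomorphism; I do not expect this to be a genuine obstacle, since it is a standard closure property of the right class of a weak factorization system (isomorphisms are fibrations and fibrations are closed under composition), and in the preorder setting it is immediate. In sum, the argument is short, and its substance is entirely concentrated in the single application of the lifting axiom in the forward direction, with the preorder hypothesis converting the resulting retraction into an isomorphism.
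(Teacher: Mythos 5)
Your proof is correct and is exactly the routine argument the paper has in mind (the paper omits the proof of this corollary as one of the "short, routine arguments" of Section \ref{basic properties}): lift $\operatorname{id}_X$ against the fibration $X\rightarrow 1$ along the acyclic cofibration $\eta(X)$ to get a map $FX\rightarrow X$, use the preorder hypothesis to upgrade the two opposite maps to an isomorphism, and use closure of fibrations under composition with isomorphisms for the converse. Nothing is missing.
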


\begin{prop}\label{cofs between cofibrants}
Let $\mathcal{A}$ be a model preorder, 
and let $f:X \rightarrow Y$ be a morphism in $\mathcal{A}$. 
\begin{enumerate}
\item If $Y$ is fibrant, then $f: X \rightarrow Y$ is a fibration if and only if $X$ is fibrant.
\item If $X$ and $Y$ are each bifibrant (that is, both cofibrant and fibrant), 
then $f$ is a weak equivalence if and only if $f$ is an isomorphism.
\end{enumerate}
\end{prop}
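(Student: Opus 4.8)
The plan is to prove both statements by elementary lifting arguments against the canonical map $X\to 1$ to the terminal object and the canonical map $0\to Y$ from the initial object, using the two weak factorization systems $(\cof,\we\cap\fib)$ and $(\cof\cap\we,\fib)$ comprising the model structure, the retractility of $\fib$ from Proposition \ref{fibrations are retractile}, and the fact that in a preorder any morphism which admits a morphism in the opposite direction is an isomorphism.

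For part (1), recall that $X$ is fibrant exactly when $X\to 1$ is a fibration. If $f$ is a fibration and $Y$ is fibrant, then $X\to 1$ equals the composite $X\xrightarrow{f}Y\to 1$ of two fibrations, hence is a fibration (the right class of a weak factorization system is closed under composition), so $X$ is fibrant. Conversely, if $X$ is fibrant, then $X\to 1=(Y\to 1)\circ f$ is a fibration, and since $\fib$ is retractile by Proposition \ref{fibrations are retractile}, its factor $f$ is a fibration as well. Note that only the forward implication actually uses fibrancy of $Y$.

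For part (2), the converse is immediate, since every isomorphism is a weak equivalence; the content is the forward implication. Given a weak equivalence $f$ with $X,Y$ bifibrant, I would factor $f$ through the weak factorization system $(\cof\cap\we,\fib)$ as $X\xrightarrow{i}Z\xrightarrow{p}Y$ with $i$ an acyclic cofibration and $p$ a fibration; the two-out-of-three property then makes $p$ an acyclic fibration. The goal is to show that $i$ and $p$ are each isomorphisms. For $i$: since $X$ is fibrant, $X\to 1$ is a fibration, and the commuting square with top edge $\mathrm{id}_X$, left edge $i$, right edge $X\to 1$, and bottom edge $Z\to 1$ admits a lift $Z\to X$; together with $i\colon X\to Z$ this exhibits $X\cong Z$, so $i$ is an isomorphism. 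Dually, for $p$: since $Y$ is cofibrant, $0\to Y$ is a cofibration, and lifting in the commuting square with left edge $0\to Y$, right edge $p$, and bottom edge $\mathrm{id}_Y$ produces a map $Y\to Z$; together with $p\colon Z\to Y$ this exhibits $Y\cong Z$, so $p$ is an isomorphism. Hence $f=p\circ i$ is an isomorphism.

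I do not expect a serious obstacle: every step is a one-line lifting argument. The single point deserving care is in part (2), where the lifts produced are only one-sided (a retraction of $i$ and a section of $p$); it is precisely the hypothesis that $\mathcal{A}$ is a preorder---so that a morphism in each direction forces an isomorphism---that promotes these one-sided inverses to genuine isomorphisms, whereas in a general model category one would recover only the familiar statement that $f$ is a homotopy equivalence. One should also confirm the routine fact underlying the converse of (2), namely that isomorphisms are weak equivalences, which holds because $\we$ is retract-closed and every isomorphism is a retract of an identity morphism.
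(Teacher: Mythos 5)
Your proof is correct, and it is exactly the short, routine lifting argument the paper has in mind (the paper omits its proof of this proposition as routine): part (1) via closure of fibrations under composition plus retractility of the right class (Proposition \ref{fibrations are retractile}), and part (2) via factoring, two-out-of-three, and the preorder fact that a map with a one-sided inverse is an isomorphism. Your parenthetical observations — that the converse in (1) does not use fibrancy of $Y$, and that the preorder hypothesis is what upgrades the one-sided lifts in (2) — are both accurate.
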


Here is the upshot of Propositions \ref{monadic replacements} and \ref{cofs between cofibrants} and Corollary \ref{identification of fibrant objects} (and their duals, which are also of course true): given a model preorder $\mathcal{A}$, one gets:
\begin{itemize}
\item an idempotent monad $F$ (fibrant replacement) on $\mathcal{A}$, unique up to isomorphism.
\item Consequently one gets a reflective replete\footnote{``Replete'' means ``closed under isomorphisms.''} subcategory $\Fib(\mathcal{A})$ of $\mathcal{A}$ consisting of the fibrant objects.
\item Dually, one gets an idempotent comonad $C$ (cofibrant replacement) on $\mathcal{A}$, also unique up to isomorphism, and consequently a coreflective replete subcategory $\Cofib(\mathcal{A})$ of $\mathcal{A}$ consisting of the cofibrant objects.
\item Finally, the homotopy category of the given model structure is precisely the intersection $\Fib(\mathcal{A})\cap \Cofib(\mathcal{A}) \subseteq\mathcal{A}$.
\end{itemize}

\section{Which pairs of (co)reflective subcategories arise from model preorders?}
\label{model structures from compatible pairs}

In the previous subsection, we explained how a model preorder $\mathcal{A}$ yields a pair $(\Fib(\mathcal{A}),\Cofib(\mathcal{A}))$, with $\Fib(\mathcal{A})$ a reflective subcategory of $\mathcal{A}$, and with $\Cofib(\mathcal{A})$ a coreflective subcategory of $\mathcal{A}$. In this section, we study the question of whether one can go in the other direction, i.e., given a preorder $\mathcal{A}$ with initial object and final object, a reflective subcategory $\Fib$ of $\mathcal{A}$, and a coreflective subcategory $\Cofib$ of $\mathcal{A}$, whether we can find a model structure on $\mathcal{A}$ such that $\Fib(\mathcal{A}) = \Fib$ and $\Cofib(\mathcal{A}) = \Cofib$. In 
Theorem \ref{characterization thm}, we show that the answer is ``yes'' if and only if $\Fib,\Cofib$ satisfy two simple conditions which make them a ``compatible pair'' (Definition \ref{def of compatible pair}).

\subsection{Compatible pairs}
We introduce the notion of ``compatible pairs.'' The results in this subsection establish basic properties of compatible pairs, and are quite straightforward to prove, so we leave off the proofs. 

\begin{prop}\label{compatibility conditions between comonad and monad}
Let $\mathcal{A}$ be a model preorder. Then the following conditions are all true:
\begin{enumerate}
\item Let $F,C$ be the fibrant replacement functor and cofibrant replacement functor, respectively, on $\mathcal{A}$. Then $F$ and $C$ commute up to isomorphism. 
\item If $X,Y$ are objects of $\mathcal{A}$ such that $CX\leq FY$, then $CX\leq FCY$ and $CFX\leq FY$.
\item If $X,Y$ are objects of $\mathcal{A}$ such that $X\leq CY$ and $FX\leq Y$, then $FX\leq CY$.
\end{enumerate}
\end{prop}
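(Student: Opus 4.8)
The plan is to derive all three parts from a single mechanism. In a model preorder there is at most one morphism between any two objects, so every square commutes automatically, and the lifting axiom becomes a machine for manufacturing inequalities: whenever $A\leq B$, $X\leq Y$, $A\leq X$, and $B\leq Y$, and the map $A\to B$ is an acyclic cofibration while $X\to Y$ is a fibration, the lift in the resulting square gives $B\leq X$. Every assertion below will be obtained by exhibiting such a square and reading off the lift.

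First I would record three auxiliary facts about an arbitrary object $Z$, each proved by two-out-of-three applied along $CZ\leq Z\leq FZ$. Since $CZ\to Z$ is an acyclic fibration and $Z\to FZ$ is an acyclic cofibration, the composite $CZ\to FZ$ is a weak equivalence. This composite factors as $CZ\to CFZ\to FZ$ and as $CZ\to FCZ\to FZ$, so two-out-of-three (using that $CFZ\to FZ$ and $CZ\to FCZ$ are weak equivalences) shows that $CZ\to CFZ$ and $FCZ\to FZ$ are weak equivalences. Because $CZ$ and $CFZ$ are both cofibrant, the dual of Proposition \ref{cofs between cofibrants}(1) upgrades $CZ\to CFZ$ to an acyclic cofibration; because $FCZ$ and $FZ$ are both fibrant, Proposition \ref{cofs between cofibrants}(1) upgrades $FCZ\to FZ$ to an acyclic fibration. (Throughout, $FZ$ is fibrant and $CZ$ is cofibrant, the unit $Z\to FZ$ is an acyclic cofibration, and the counit $CZ\to Z$ is an acyclic fibration, by Proposition \ref{monadic replacements} and its dual.)

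With these in hand, part (3) is the most immediate: the hypotheses $X\leq CY$ and $FX\leq Y$ assemble a square whose top is $X\to CY$, left leg the acyclic cofibration $X\to FX$, right leg the acyclic fibration $CY\to Y$, and bottom $FX\to Y$; the lift is exactly the desired $FX\leq CY$. Part (2) is two further lifts along the given $CX\leq FY$. For $CX\leq FCY$ I would lift the cofibration $0\to CX$ (using that $CX$ is cofibrant) against the acyclic fibration $FCY\to FY$ supplied by the auxiliary facts. For $CFX\leq FY$ I would lift the acyclic cofibration $CX\to CFX$ against the fibration $FY\to 1$ (using that $FY$ is fibrant).

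The main work is part (1), the claim $FCX\cong CFX$, which I would obtain as two inequalities from two lifting squares sharing the diagonal $CX\leq FX$. In the square with left leg the acyclic cofibration $CX\to FCX$ and right leg the fibration $CFX\to FX$, the lift gives $FCX\leq CFX$; in the square with left leg the acyclic cofibration $CX\to CFX$ and right leg the fibration $FCX\to FX$, the lift gives $CFX\leq FCX$. Together these give $FCX\cong CFX$. I expect the only real obstacle to be bookkeeping rather than depth: in each square one must check that all four corner inequalities genuinely hold (so that the square exists at all), and that each leg has been correctly classified as an acyclic cofibration or a fibration by means of the auxiliary facts. That verification is routine, but it is where an error would most easily slip in.
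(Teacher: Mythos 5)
Your proof is correct and is essentially the paper's own argument: the same auxiliary facts (that $C\eta$ is an acyclic cofibration and $F\epsilon$ is an acyclic fibration, via two-out-of-three along $CZ\leq Z\leq FZ$ and the cofibrant/fibrant criterion of Proposition \ref{cofs between cofibrants}), the same pair of lifting squares for part (1), and the same single square for part (3). The only cosmetic difference is that the paper deduces part (2) directly from the commutativity in part (1), whereas you give it its own two lifting squares, which is equally valid.
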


\begin{remark}
If we are given a preorder $\mathcal{A}$, and rather than a model structure on $\mathcal{A}$, we are given an idempotent monad $F$ on $\mathcal{A}$ and an idempotent comonad $C$ on $\mathcal{A}$, then we have the following implications between the conditions from Proposition \ref{compatibility conditions between comonad and monad}: conditions 2 and 3 jointly imply condition 1, and condition 1 implies condition 2. 
\end{remark}

%

\begin{prop}\label{weak equivs}
Let $\mathcal{A}$ be a model preorder, and let $f: V\rightarrow W$ be a morphism in $\mathcal{A}$. Then the following conditions are equivalent:
\begin{enumerate}
\item $f$ is a weak equivalence.
\item $CFf: CFV \rightarrow CFW$ is an isomorphism.
\item $FCf: FCV \rightarrow FCW$ is an isomorphism.
\end{enumerate}
\end{prop}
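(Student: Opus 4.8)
The plan is to prove $(1)\Leftrightarrow(2)$ in detail; the equivalence $(1)\Leftrightarrow(3)$ then follows by the evident dual argument, interchanging the roles of $F$ and $C$. The entire proof is an exercise in the two-out-of-three property combined with Proposition \ref{cofs between cofibrants}(2), which reduces ``weak equivalence'' to ``isomorphism'' for maps between bifibrant objects.

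First I would show that $f$ is a weak equivalence if and only if $CFf$ is. Write $\eta$ for the unit of the fibrant replacement monad and $\epsilon$ for the counit of the cofibrant replacement comonad; recall that each $\eta(X)\colon X\to FX$ is an acyclic cofibration and each $\epsilon(X)\colon CX\to X$ is an acyclic fibration, so in particular every component of $\eta$ and of $\epsilon$ is a weak equivalence. Naturality of $\eta$ provides a commutative square relating $f$ to $Ff$ via $\eta_V$ and $\eta_W$; since $\eta_V,\eta_W$ are weak equivalences, two applications of two-out-of-three show that $f$ is a weak equivalence iff $Ff$ is. Naturality of $\epsilon$ at the map $Ff$ provides a commutative square relating $Ff$ to $CFf$ via $\epsilon_{FV}$ and $\epsilon_{FW}$; since these are weak equivalences, two-out-of-three again shows $Ff$ is a weak equivalence iff $CFf$ is. Chaining the two equivalences, $f$ is a weak equivalence iff $CFf$ is.

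The next step is to observe that $CFV$ and $CFW$ are bifibrant. Each is cofibrant, being in the image of the cofibrant replacement comonad. For fibrancy I would invoke the commutativity of $F$ and $C$ up to isomorphism (Proposition \ref{compatibility conditions between comonad and monad}(1)): we have $CFV\cong FCV$, and $FCV$ is fibrant since it lies in the image of the fibrant replacement monad, so $CFV$ is fibrant as well, the fibrant objects forming a replete subcategory. Hence $CFf\colon CFV\to CFW$ is a morphism between bifibrant objects, and Proposition \ref{cofs between cofibrants}(2) says it is a weak equivalence if and only if it is an isomorphism. Combining with the previous paragraph yields $(1)\Leftrightarrow(2)$.

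I expect the only genuine content, as opposed to routine bookkeeping with two-out-of-three, to be the bifibrancy of $CFV$ and $CFW$; this is exactly the step that consumes the compatibility (commutativity) of $F$ and $C$. Without it there would be no reason for the cofibrant object $CFV$ to be fibrant, and the crucial reduction ``weak equivalence $=$ isomorphism between bifibrants'' would be unavailable. For $(1)\Leftrightarrow(3)$ I would run the mirror-image argument: replace cofibrantly first and fibrantly second, use that the relevant components of $\epsilon$ and $\eta$ are weak equivalences to conclude that $f$ is a weak equivalence iff $FCf$ is, and note that $FCV\cong CFV$ is again bifibrant, so that $FCf$ is a weak equivalence iff it is an isomorphism.
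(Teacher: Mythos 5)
Your proof is correct and follows essentially the same route as the paper's: a chain of two-out-of-three reductions along the unit and counit maps, followed by the observation that the double replacement is bifibrant (using commutativity of $F$ and $C$) so that Proposition \ref{cofs between cofibrants}(2) converts ``weak equivalence'' into ``isomorphism.'' The only cosmetic difference is that the paper proves $(1)\Leftrightarrow(3)$ directly and deduces $(2)$ from the commutativity isomorphism $CF\cong FC$, whereas you prove $(1)\Leftrightarrow(2)$ first and appeal to the dual argument for $(3)$.
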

\begin{prop}\label{three conditions}
Let $\mathcal{A}$ be a preorder with initial and terminal object and which is equipped with a model structure, and let $f: X\rightarrow Y$ be a morphism in $\mathcal{A}$. 
Then, among the following conditions, the first implies the latter two, and the latter two are equivalent:
\begin{enumerate}
\item $f$ is an acyclic fibration.
\item The counit map $\epsilon(Y): CY \rightarrow Y$ factors as an acyclic fibration $CY\rightarrow X$ followed by $f$.
\item $Cf$ is an isomorphism. 
\end{enumerate}
\end{prop}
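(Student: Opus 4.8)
The plan is to establish the chain $(1)\Rightarrow(2)$ together with the equivalence $(2)\Leftrightarrow(3)$; since $(2)\Rightarrow(3)$, this immediately yields $(1)\Rightarrow(3)$ as well, covering all the claimed implications. Throughout I would lean on two standing facts about the cofibrant replacement comonad $C$: first, that for every object $Y$ the counit $\epsilon(Y)\colon CY\to Y$ is an acyclic fibration, since it is precisely the acyclic-fibration half of the cofibration-followed-by-acyclic-fibration factorization of the map $0\to Y$; and second, that $C$ is an order-preserving idempotent comonad (the dual of Proposition \ref{monadic replacements}), so $CCY\cong CY$ and $a\le b$ implies $Ca\le Cb$.

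For $(1)\Rightarrow(2)$, suppose $f\colon X\to Y$ is an acyclic fibration. Since $CY$ is cofibrant, the map $0\to CY$ is a cofibration, and the square with top edge the unique map $0\to X$, bottom edge $\epsilon(Y)\colon CY\to Y$, left edge $0\to CY$, and right edge $f$ commutes (both composites $0\to Y$ agree, $0$ being initial). The lifting axiom then produces a morphism $g\colon CY\to X$ with $f\circ g=\epsilon(Y)$. It remains to see that $g$ is an acyclic fibration. This is where I would invoke retractility: the acyclic fibrations form the right class of the weak factorization system $(\cof\cap\we,\fib)$, and by Proposition \ref{fibrations are retractile} every such right class on a preorder is retractile; since $f\circ g=\epsilon(Y)$ lies in $\mathcal{R}$, retractility forces its first factor $g$ to lie in $\mathcal{R}$, i.e.\ to be an acyclic fibration, which is exactly condition $(2)$.

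For $(2)\Rightarrow(3)$, a factorization $\epsilon(Y)=f\circ g$ with $g\colon CY\to X$ supplies a morphism $CY\to X$, hence $CY\le X$ in the preorder; applying the order-preserving functor $C$ and using idempotency gives $CY\cong CCY\le CX$. Combined with $CX\le CY$, which holds automatically since $Cf\colon CX\to CY$ is a morphism, this shows $CX\cong CY$, i.e.\ $Cf$ is an isomorphism. For the converse $(3)\Rightarrow(2)$, if $Cf$ is an isomorphism then naturality of $\epsilon$ gives $\epsilon(Y)\circ Cf=f\circ\epsilon(X)$, so setting $g=\epsilon(X)\circ(Cf)^{-1}\colon CY\to X$ yields $f\circ g=\epsilon(Y)$. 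Here $g$ is an acyclic fibration as the composite of an isomorphism with the acyclic fibration $\epsilon(X)$, using that acyclic fibrations are closed under composition and contain all isomorphisms. In fact, because $\mathcal{A}$ is a preorder, the equality $f\circ g=\epsilon(Y)$ is automatic, there being at most one morphism $CY\to Y$.

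I expect no serious obstacle: every step reduces to a single lifting problem, to the order-preservation of $C$, or to the bookkeeping of classes in a weak factorization system, with the preorder hypothesis repeatedly trivializing questions of commutativity and of uniqueness of lifts. The one point requiring genuine care is the appeal to retractility in $(1)\Rightarrow(2)$: one must verify that it is the correct factor $g$—the map applied first—rather than $f$ that retractility returns, and that the relevant retractile right class is the acyclic fibrations rather than the plain fibrations.
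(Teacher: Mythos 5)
Your proof is correct and follows essentially the same route as the paper's: the lift $CY\to X$ against the cofibration $0\to CY$, retractility of the acyclic fibrations (Proposition \ref{fibrations are retractile}) for $(1)\Rightarrow(2)$, functoriality and idempotency of $C$ for $(2)\Rightarrow(3)$, and the naturality square of $\epsilon$ for $(3)\Rightarrow(2)$. One small correction: the acyclic fibrations are the right class of the weak factorization system $(\cof,\we\cap\fib)$, not of $(\cof\cap\we,\fib)$, whose right class is the plain fibrations; with that relabeling your appeal to retractility is exactly right.
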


In Proposition \ref{sectile fibs prop} and throughout the rest of this subsection, we continue to write $F$ for the fibrant replacement monad, and $C$ for the cofibrant replacement comonad, on a model preorder.
\begin{prop}\label{sectile fibs prop}
Let $\mathcal{A}$ be a model preorder. Then $\mathcal{A}$ has sectile acyclic fibrations if and only if, for every map $f$ in $\mathcal{A}$ such that $Cf$ is an isomorphism, the map $f$ is an acyclic fibration. In other words: $\mathcal{A}$ has sectile acyclic fibrations if and only if the three conditions of Proposition \ref{three conditions} are all equivalent in $\mathcal{A}$.

Dually, $\mathcal{A}$ has retractile acyclic cofibrations if and only if, for every map $f$ in $\mathcal{A}$ such that $Ff$ is an isomorphism, the map $f$ is an acyclic cofibration.
\end{prop}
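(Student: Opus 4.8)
The plan is to lean on Proposition \ref{three conditions}, which already gives, for every map, the implications $(1)\Rightarrow(2)\Leftrightarrow(3)$. Consequently the three conditions are equivalent for a given map precisely when the single implication $(3)\Rightarrow(1)$ holds for it, and the hypothesis ``$Cf$ an isomorphism implies $f$ an acyclic fibration'' is nothing but $(3)\Rightarrow(1)$ asserted for all maps. So the ``in other words'' clause is a tautological reformulation, and the real content is the biconditional between sectility of the acyclic fibrations and this implication. I would prove the two directions separately, and then obtain the retractile-acyclic-cofibration statement by formal dualization.

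For the forward direction, assume the acyclic fibrations are sectile and let $f\colon X\to Y$ satisfy that $Cf$ is an isomorphism. I would invoke naturality of the counit $\epsilon$ of the comonad $C$: the naturality square for $f$ gives $f\circ\epsilon_X=\epsilon_Y\circ Cf$. Now $\epsilon_Y\colon CY\to Y$ is an acyclic fibration (it is the acyclic-fibration half of the cofibration-then-acyclic-fibration factorization of $0\to Y$ that defines cofibrant replacement), and $Cf$, being an isomorphism, is also an acyclic fibration; since acyclic fibrations are closed under composition, $\epsilon_Y\circ Cf=f\circ\epsilon_X$ is an acyclic fibration. Applying sectility to the composite $f\circ\epsilon_X$ then yields that $f$ itself is an acyclic fibration. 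This is exactly $(3)\Rightarrow(1)$.

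For the reverse direction, assume $(3)\Rightarrow(1)$ holds for every map, and suppose $g\circ f$ is an acyclic fibration, with $f\colon X\to Y$ and $g\colon Y\to Z$. By the implication $(1)\Rightarrow(3)$ of Proposition \ref{three conditions}, $C(g\circ f)=Cg\circ Cf$ is an isomorphism, i.e.\ $CX\cong CZ$. Since $\mathcal{A}$ is a preorder, the mere existence of the maps $Cf$ and $Cg$ gives $CX\le CY\le CZ$, and combining this with $CZ\le CX$ (from $CX\cong CZ$) forces $CY\cong CZ$; that is, $Cg$ is an isomorphism. Applying the hypothesis $(3)\Rightarrow(1)$ to the map $g$ then shows $g$ is an acyclic fibration, so the acyclic fibrations are sectile.

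The argument is elementary, so the difficulties are bookkeeping rather than conceptual. The two points to get exactly right are: first, the direction in which sectility is applied---sectility of a composite $g\circ f$ returns the outer (last-applied) map, so applied to $f\circ\epsilon_X$ it returns $f$, which is precisely what is needed; and second, the small order-theoretic observation that a composite isomorphism $CX\to CY\to CZ$ in a preorder forces every intermediate factor to be an isomorphism, which is what lets us pass from $C(g\circ f)$ being an isomorphism to $Cg$ being one. The dual statement, concerning retractile acyclic cofibrations and the condition ``$Ff$ an isomorphism implies $f$ an acyclic cofibration,'' follows by the evident dualization: replace the counit of $C$ by the unit $\eta$ of the fibrant replacement monad $F$, replace $\epsilon_Y$ being an acyclic fibration by $\eta_X$ being an acyclic cofibration, and reverse all arrows and composition orders.
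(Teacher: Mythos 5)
Your proof is correct. The paper omits the proof of this proposition entirely (it is one of the results declared ``quite straightforward to prove'' whose proofs are left off), and your argument is exactly the routine one being alluded to: the forward direction via the naturality square $f\circ\epsilon_X=\epsilon_Y\circ Cf$ together with sectility applied to that composite, and the reverse direction via functoriality of $C$ plus the preorder observation that an isomorphism $CX\to CY\to CZ$ forces $Cg$ to be an isomorphism.
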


\begin{definition}\label{def of strong model structures}
A model structure is {\em strong} if its acyclic fibrations are sectile and its acyclic cofibrations are retractile. 
\end{definition}

\begin{prop}\label{strong model str satisfies strong compatibility}
Let $\mathcal{A}$ be a strong model preorder. If $f:X \rightarrow Y$ is inverted by $C$ and also inverted by $F$, then $f$ is an isomorphism.
\end{prop}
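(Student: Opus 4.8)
The plan is to combine the two halves of Proposition \ref{sectile fibs prop} to deduce that $f$ is simultaneously an acyclic fibration and an acyclic cofibration, and then to invoke the standard fact that such a map is an isomorphism---a fact which is nearly trivial in the preorder setting.

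First, since $\mathcal{A}$ is a \emph{strong} model preorder, Definition \ref{def of strong model structures} tells us that its acyclic fibrations are sectile and its acyclic cofibrations are retractile. By Proposition \ref{sectile fibs prop}, sectility of the acyclic fibrations is equivalent to the statement that every map inverted by $C$ is an acyclic fibration; since $Cf$ is an isomorphism by hypothesis, $f$ is therefore an acyclic fibration. Dually, by the dual half of Proposition \ref{sectile fibs prop}, retractility of the acyclic cofibrations is equivalent to the statement that every map inverted by $F$ is an acyclic cofibration; since $Ff$ is an isomorphism by hypothesis, $f$ is therefore an acyclic cofibration.

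Having placed $f$ in both $\cof\cap\we$ and $\fib\cap\we$, I would finish with the usual lifting argument. Because $f$ is an acyclic cofibration, it has the left lifting property against all fibrations, and because $f$ is itself a fibration, we may solve the lifting problem
\[\xymatrix{
X \ar[r]^{\mathrm{id}_X} \ar[d]_f & X \ar[d]^f \\
Y \ar[r]_{\mathrm{id}_Y} & Y,
}\]
obtaining a map $\ell\colon Y\rightarrow X$. In the preorder $\mathcal{A}$, the existence of $\ell\colon Y\rightarrow X$ together with $f\colon X\rightarrow Y$ forces $X\cong Y$, so $f$ is an isomorphism.

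I do not expect any real obstacle here: the substance of the proposition is entirely encapsulated in the biconditionals of Proposition \ref{sectile fibs prop}, which convert the hypothesis ``strong'' into the two implications ``$C$-inverted $\Rightarrow$ acyclic fibration'' and ``$F$-inverted $\Rightarrow$ acyclic cofibration.'' Once both are applied, the conclusion reduces to the classical observation that a morphism which is both an acyclic cofibration and a fibration must be an isomorphism, and in a preorder this final step is immediate.
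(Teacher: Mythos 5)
Your proof is correct and is essentially the argument the paper has in mind (the paper marks this proposition as routine and places it immediately after Proposition \ref{sectile fibs prop} precisely so that strongness converts the hypotheses into ``$f$ is an acyclic fibration'' and ``$f$ is an acyclic cofibration''). The concluding lifting step is exactly the standard one, and in a preorder it immediately yields that $f$ is an isomorphism.
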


\begin{definition}\label{def of compatible pair}
Let $\mathcal{A}$ be a preorder. By a {\em compatible pair on $\mathcal{A}$} we mean a monad $F$ on $\mathcal{A}$ and a comonad $C$ on $\mathcal{A}$ such that the first and third conditions (equivalently, all the conditions) from Proposition \ref{compatibility conditions between comonad and monad} are satisfied. That is:
\begin{itemize}
\item $F$ commutes with $C$ up to isomorphism, and
\item if $X,Y$ are objects of $\mathcal{A}$ such that $X\leq CY$ and $FX\leq Y$, then $FX\leq CY$.
\end{itemize}

We say that a compatible pair $F,C$ is a {\em strongly compatible pair} if it is a compatible pair satisfying:
\begin{itemize}
\item if $f:X \rightarrow Y$ is a morphism in $\mathcal{A}$ which is inverted by $C$ and also inverted by $F$, then $f$ is an isomorphism.
\end{itemize}
\end{definition}

\begin{prop}\label{compatible pairs from model preorders}
Given a model preorder $\mathcal{A}$, let $F$ be the fibrant replacement monad on $\mathcal{A}$, and let $C$ be the cofibrant replacement comonad on $\mathcal{A}$, as in Proposition \ref{monadic replacements}. Then $(F,C)$ is a compatible pair. If the model structure on $\mathcal{A}$ is strong, then $(F,C)$ is a strongly compatible pair.
\end{prop}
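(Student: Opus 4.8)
The plan is to recognize this proposition as a bookkeeping assembly of results already established, so the work reduces to matching each clause in Definition \ref{def of compatible pair} against a statement proved earlier in the section. First I would invoke Proposition \ref{monadic replacements} (and its dual) to guarantee that the fibrant replacement functor $F$ genuinely carries the structure of a monad and that the cofibrant replacement functor $C$ genuinely carries the structure of a comonad; this is what makes the phrase ``compatible pair'' even applicable, since that notion is defined for a monad–comonad pair.

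With $F$ a monad and $C$ a comonad in hand, the first assertion follows from Proposition \ref{compatibility conditions between comonad and monad}. Indeed, Definition \ref{def of compatible pair} requires exactly two things of $(F,C)$: that $F$ commutes with $C$ up to isomorphism, and that $X\leq CY$ together with $FX\leq Y$ implies $FX\leq CY$. These are precisely conditions (1) and (3) of Proposition \ref{compatibility conditions between comonad and monad}, which hold for any model preorder. So I would simply cite those two clauses to conclude that $(F,C)$ is a compatible pair.

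For the second assertion, assume in addition that the model structure on $\mathcal{A}$ is strong. The only extra requirement in the definition of a \emph{strongly} compatible pair is that any morphism $f$ inverted by both $C$ and $F$ be an isomorphism. This is the exact content of Proposition \ref{strong model str satisfies strong compatibility}, so I would invoke that proposition directly to finish.

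I do not expect any genuine obstacle here: all the substantive work has already been carried out in Propositions \ref{monadic replacements}, \ref{compatibility conditions between comonad and monad}, and \ref{strong model str satisfies strong compatibility}, and the present statement merely repackages their conclusions in the language of (strongly) compatible pairs. The only point demanding a moment's care is to confirm that the defining conditions quoted from Definition \ref{def of compatible pair} line up verbatim with conditions (1) and (3) of Proposition \ref{compatibility conditions between comonad and monad} rather than with condition (2); since the definition explicitly names ``the first and third conditions,'' this matching is immediate.
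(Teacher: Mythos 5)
Your proposal is correct and matches the paper's intended argument: the paper omits the proof as routine precisely because, as you observe, the compatible-pair conditions are conditions (1) and (3) of Proposition \ref{compatibility conditions between comonad and monad}, and the strong-compatibility clause is verbatim Proposition \ref{strong model str satisfies strong compatibility}. Nothing further is needed.
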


\subsection{The Droz-Zakharevich and Stanculescu model structures arising from a compatible pair}
\label{The Droz-Zakharevich and Stanculescu...}

Given a compatible pair $(F,C)$ on a bicomplete preorder $\mathcal{A}$, we aim to produce a model structure on $\mathcal{A}$ in which $F$ is the fibrant replacement monad, and $C$ is the cofibrant replacement comonad. We will go about this in a gradual way, by first considering two constructions already in the literature---one due to Droz--Zakharevich \cite{MR4226148}, and the other due to Stanculescu \cite{MR3274498}---which are natural guesses for such a model structure on $\mathcal{A}$. We will see that each of these model structures will do the job as long as $(F,C)$ is {\em strongly} compatible. However, for a general compatible (not necessarily strongly compatible) pair $(F,C)$, the Droz-Zakharevich (DZ) and Stanculescu model structures each have the correct weak equivalences, but too many cofibrations. In the next subsection, we will modify the Stanculescu model structure, cutting down the class of cofibrations, to produce a new model structure on $\mathcal{A}$ which is of precisely the desired kind.

First we consider the DZ model structure. Droz and Zakharevich, in \cite{MR4226148}, provide a method (Proposition \ref{dz method}, below) to construct a model structure with a fixed class of weak equivalences, using a ``decomposable class'' and a ``center.'' 
Here are the relevant definitions from \cite{MR4226148}:
\begin{definition}[\cite{MR4226148}]\label{definition of a center}
Let $\mathcal{A}$ be a preorder.
\begin{itemize}
\item
    A subcategory $\mathcal{E}$ of $\mathcal{A}$ is \emph{decomposable}\footnote{The motivating example of a decomposable subcategory is the class of all weak equivalences in a model preorder. Another class of examples, in light of Proposition \ref{fibrations are retractile}: given a preorder equipped with a factorization system $(\mathcal{L},\mathcal{R})$, the left class $\mathcal{L}$ is retractile if and only if it is decomposable. Similarly, the right class $\mathcal{R}$ is sectile if and only if it is decomposable.} if, for any morphism $f \in \mathcal{E}$ such that $f = hg$ for some morphisms $g$ and $h$, both $g$ and $h$ are in $\mathcal{E}$. 
In other words, a subcategory $\mathcal{E}$ is decomposable if and only if it is both sectile and retractile (Definition \ref{retractile and sectile classes}).
\item
Given a decomposable subcategory $\mathcal{W}$ of $\mathcal{A}$, a \emph{center associated to $\mathcal{W}$} is a functor
\[
\chi \colon \mathcal{A} \longrightarrow \mathcal{A}
\]
such that the following properties hold:
\begin{itemize}
    \item[\textbf{C1:}] The image of $\chi|_{\mathcal{W}}$ only contains isomorphisms.
    \item[\textbf{C2:}] For all $A \in \mathcal{A}$, the diagram
    \[
    \begin{tikzcd}
    A \times \chi(A) \arrow[d] \arrow[r] & \chi(A) \arrow[d] \\
    A \arrow[r] & A \cup \chi(A)
    \end{tikzcd}
    \]
    lies in $\mathcal{W}$.
\end{itemize}   
\end{itemize}
\end{definition}

\begin{definition}\label{def of semifib}
Let $\mathcal{A}$ be a preorder with initial and terminal objects.
Given a compatible pair $(F,C)$ on $\mathcal{A}$, we adopt the following terminology:
\begin{itemize}
\item an element $X$ of $\mathcal{A}$ is {\em $C$-cofibrant} (respectively, {\em $F$-fibrant}) if the map $CX \rightarrow X$ (respectively, $X\rightarrow FX$) is an isomorphism.
\item A map $X \rightarrow Y$ is {\em $C$-anodyne} if it has the right lifting property with respect to all maps with $C$-cofibrant codomain. Equivalently: $X \rightarrow Y$ is $C$-anodyne if, for each element $T$ of $\mathcal{A}$ such that $CT\leq Y$, we also have $CT\leq X$.
\item An element $X$ of $\mathcal{A}$ is {\em semifibrant} (respectively, {\em semicofibrant}) if $FCX\leq X$ (respectively, $X\leq FCX$).
\end{itemize}
\end{definition}
Given a center $\chi$ on a poset $\mathcal{A}$, Droz--Zakharevich \cite{MR4226148} say that an element $X$ of a poset is ``semifibrant'' (respectively, ``semicofibrant'') if $\chi (X) \leq X$ (respectively, $X\leq \chi(X)$). We will show in Proposition \ref{CP generate model structures} that, given a compatible pair $(F,C)$ on a preorder $\mathcal{A}$, the composite $FC$ is a center on $\mathcal{A}$. Hence our usage of the terms ``semifibrant'' and ``semicofibrant'' are special cases of Droz--Zakharevich's use.

\begin{definition}\label{def of I}
    Given a functor $\mathcal{F}: \mathcal{A} \rightarrow \mathcal{B}$, we let $\mathcal{I}_{\mathcal{F}}$ be the class of morphisms in $\mathcal{A}$ which are inverted by $\mathcal{F}$, i.e., 
\begin{align*}
 \mathcal{I}_{\mathcal{F}} &= \left\{f \in \Mor(\mathcal{A}) \mid Ff \, \text{is an isomorphism} \right\}.
\end{align*}
\end{definition}

\begin{prop}\label{semifibrancy characterization}
With definitions as in Definition \ref{def of semifib}, given an element $X$ of $\mathcal{A}$, the following conditions are equivalent:
\begin{enumerate}
\item $X$ is semifibrant.
\item $CX$ is $F$-fibrant.
\item The map $X\rightarrow FX$ is in $\mathcal{I}_C$.
\end{enumerate}
\end{prop}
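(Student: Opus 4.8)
The plan is to rewrite each of the three conditions as an inequality in the preorder $\mathcal{A}$ and then check the equivalences directly, using only the standard unit and counit inequalities together with the commuting isomorphism built into the definition of a compatible pair. Throughout I would use that $CX \leq X \leq FX$ for every object $X$ (counit of $C$, unit of $F$), that both $C$ and $F$ are monotone, and that $C$ and $F$ are idempotent, so $CCX \cong CX$ and $FFX \cong FX$ (any monad or comonad on a preorder is automatically idempotent, as already noted in Proposition \ref{monadic replacements}). I would also use the commuting isomorphism $FCX \cong CFX$, which is condition (1) of Proposition \ref{compatibility conditions between comonad and monad} and hence holds for the compatible pair $(F,C)$.

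First I would translate the conditions. Semifibrancy of $X$ is by definition the inequality $FCX \leq X$. The statement that $CX$ is $F$-fibrant means the unit $CX \to F(CX)$ is an isomorphism; since $CX \leq FCX$ always holds, this is equivalent to $FCX \leq CX$. The statement that $X \to FX$ lies in $\mathcal{I}_C$ means $C$ inverts it, i.e. $CX \cong CFX$; since $CX \leq CFX$ always holds (apply the monotone $C$ to $X \leq FX$), this is equivalent to $CFX \leq CX$. With the conditions in this form, the equivalence of (2) and (3) is immediate: under the identification $FCX \cong CFX$ the two inequalities $FCX \leq CX$ and $CFX \leq CX$ say the same thing.

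It then remains to link (1) with (2). The implication (2) $\Rightarrow$ (1) is trivial, since $FCX \leq CX \leq X$. For (1) $\Rightarrow$ (2) I would start from $FCX \leq X$, apply the monotone functor $C$ to obtain $CFCX \leq CX$, and then simplify the left-hand side using commuting and idempotence: $CFCX \cong FCCX \cong FCX$. This yields $FCX \leq CX$, which is (2). The only place demanding any care is this last computation, where one must apply $C$ on the correct side and invoke both the commuting isomorphism and $CCX \cong CX$; otherwise the argument is purely formal. Notably, the lifting condition (condition (3) of a compatible pair) plays no role here, and no bicompleteness of $\mathcal{A}$ is needed: only monotonicity, the unit/counit inequalities, idempotence, and commuting.
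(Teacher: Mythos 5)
Your proof is correct and follows essentially the same route as the paper: the key implication (1)\,$\Rightarrow$\,(2) is obtained in both cases by applying the monotone functor $C$ to the semifibrancy inequality $FCX\leq X$ and simplifying $CFCX\cong FCCX\cong FCX$ via the commuting isomorphism and idempotence, while the remaining equivalences reduce to the unit/counit inequalities. Your version merely makes explicit the translation of each condition into a single inequality, which the paper leaves as "even easier."
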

\begin{proof}
If $X$ is semifibrant, then $FCX\leq X$, hence $CX\leq FCX \cong CFCX \leq CX$, i.e., $CX$ is $F$-fibrant. The converse, and the equivalence with the third condition, are even easier.
\end{proof}

The following proposition is Proposition 3.8 from \cite{MR4226148}.
\begin{prop} \label{dz method}
    Given a decomposable subcategory $\mathcal{W}$ of a finitely bicomplete preorder $\mathcal{A}$, and given a center $\chi$ associated to $\mathcal{W}$, there exists a model structure on $\mathcal{A}$ whose class of weak equivalences is precisely $\mathcal{W}$, and whose acyclic fibrations are the maps with the right lifting property with respect to all maps with semicofibrant codomain. 
\end{prop}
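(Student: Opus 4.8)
The plan is to build the two weak factorization systems of the model structure explicitly, using the center $\chi$ to manufacture the factorizations, and to declare $\mathcal{W}$ to be the class of weak equivalences. Since $\mathcal{A}$ is a preorder, Proposition \ref{factorization systems are functorial} guarantees that any factorizations we produce are automatically orthogonal and unique up to isomorphism, so the whole task reduces to exhibiting factorizations and checking lifting properties, with all ``coherence'' data for free. First I would record that $\mathcal{W}$ satisfies two-out-of-three: this is immediate from decomposability, which is in fact strictly stronger, since it forces both factors of any composite lying in $\mathcal{W}$ to lie in $\mathcal{W}$.

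The key geometric input is an explicit formula for the relevant coreflection coming from the center. Applying C2 with $A=Y$, the canonical map $Y\times\chi(Y)\to Y$ lies in $\mathcal{W}$, and since C1 makes $\chi$ invert weak equivalences, $\chi(Y\times\chi(Y))\cong\chi(Y)$; as $Y\times\chi(Y)\le\chi(Y)$, this exhibits $Y\times\chi(Y)$ as semicofibrant. Moreover any semicofibrant $B\le Y$ satisfies $B\le\chi(B)\le\chi(Y)$ together with $B\le Y$, whence $B\le Y\times\chi(Y)$; so $Y\times\chi(Y)$ is the \emph{largest} semicofibrant object below $Y$. Unwinding the lifting property in a preorder, this identifies the class $R$ of acyclic fibrations precisely as the maps $X\to Y$ with $Y\times\chi(Y)\le X\le Y$, and in particular $R\subseteq\mathcal{W}$ by C2 and decomposability. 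Setting $\cof:=\Proj(R)$, the factorization of $f\colon X\to Y$ as
\[ X \longrightarrow X\cup(Y\times\chi(Y)) \longrightarrow Y \]
has first map a cofibration (a one-line lifting check using the maximality just established) and second map in $R$, so $(\cof,R)$ is a weak factorization system, and the cofibrant objects are exactly the semicofibrant ones.

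Dually, $X\cup\chi(X)$ is the smallest semifibrant object above $X$, and I would use it to construct the second factorization and to pin down the fibrations, aiming at an acyclic-cofibration-then-fibration factorization of the shape $X\to Y\times(X\cup\chi(X))\to Y$. With both factorizations available, the remaining point is coherence: that the $R$ defined by lifting really equals $\fib\cap\mathcal{W}$, and dually for the acyclic cofibrations. The implication ``fibration-and-weak-equivalence $\Rightarrow$ acyclic fibration'' is clean in a preorder by a Ken Brown / retract argument: factor such a map through $(\cof,R)$, use decomposability to see that the cofibration factor is a weak equivalence, lift that acyclic cofibration against the given fibration, and conclude that the original map is isomorphic to its $R$-part.

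The hard part will be the reverse reconciliation: that every acyclic fibration is a fibration, i.e. $R\perp(\cof\cap\mathcal{W})$, together with the construction of the acyclic-cofibration-then-fibration factorization whose left class is genuinely $\cof\cap\mathcal{W}$. The naive verification of $R\perp(\cof\cap\mathcal{W})$ wants to rewrite a meet of a join as a join of meets, i.e. it quietly uses distributivity of the lattice $\mathcal{A}$; this is automatic for the Boolean algebras that are this paper's main interest, but in a general finitely bicomplete poset one must instead feed the center axioms C1 and C2 into the argument in place of distributivity, exploiting that $\chi$ inverts weak equivalences and that $\chi(X)$ is weakly equivalent to $X$. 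I expect this interlocking of the two explicitly constructed factorization systems into a single model structure with weak equivalences exactly $\mathcal{W}$ to be the main obstacle, and it is essentially the content of Droz--Zakharevich's proof of the cited statement.
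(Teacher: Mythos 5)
First, a point of comparison: the paper does not prove Proposition \ref{dz method} at all --- it is quoted as Proposition 3.8 of Droz--Zakharevich \cite{MR4226148} --- so there is no in-paper argument to measure your proposal against. That said, the first half of your sketch is correct and is the right opening move: $Y\times\chi(Y)$ is the greatest semicofibrant element below $Y$, the prescribed acyclic fibrations are exactly the maps $X\to Y$ with $Y\times\chi(Y)\leq X\leq Y$, these all lie in $\mathcal{W}$ by C2 together with decomposability, and $X\to X\cup(Y\times\chi(Y))\to Y$ realizes the factorization for the weak factorization system $(\Proj(R),R)$, with cofibrant objects the semicofibrant ones.

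The genuine gap is the second weak factorization system, and the route you propose into it is the one the paper explicitly warns against. You plan to obtain the $(\cof\cap\mathcal{W},\fib)$ factorization ``dually,'' via the least semifibrant element $X\cup\chi(X)$ above $X$ and the candidate factorization $X\to Y\times(X\cup\chi(X))\to Y$. But the dual construction computes the acyclic cofibrations of the \emph{dual} DZ model structure, namely the maps $X\leq Y$ with $Y\leq X\cup\chi(X)$, and the paragraph following Proposition \ref{dz method} states that the construction is not self-dual: that class need not coincide with $\Proj(R)\cap\mathcal{W}$. Concretely, you verify neither that $X\to Y\times(X\cup\chi(X))$ lies in $\Proj(R)$ (the lifting check against $T\times\chi(T)\leq S\leq T$ only yields $X\cup(T\times\chi(T))\leq S$, which does not dominate $Y\times(X\cup\chi(X))$ in general) nor that $Y\times(X\cup\chi(X))\to Y$ lies in $\Inj\bigl(\Proj(R)\cap\mathcal{W}\bigr)$; this is exactly the content you defer to \cite{MR4226148}, so the proposal does not constitute a proof. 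Separately, the difficulty you do flag --- establishing $(\cof\cap\mathcal{W})\perp R$ and a suspected reliance on distributivity --- is not a difficulty at all: since $\cof\cap\mathcal{W}\subseteq\cof=\Proj(R)$ that lifting property is automatic, and $R=\Inj(\cof)\subseteq\Inj(\cof\cap\mathcal{W})$ together with $R\subseteq\mathcal{W}$ gives $R\subseteq\fib\cap\mathcal{W}$ for free. The only real obstruction is the existence of the $(\cof\cap\mathcal{W})$-then-$\fib$ factorization, which your sketch does not supply.
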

Given $\mathcal{A},\mathcal{W},\chi$ as in Proposition \ref{dz method}, we will call the model structure given by Proposition \ref{dz method} the {\em DZ model structure}. It is easy to see that there is also a ``dual DZ model structure'' on $\mathcal{A}$ with the same weak equivalences, but whose acyclic cofibrations are the maps with the left lifting property with respect to all maps with semifibrant domain. However, as Droz and Zakhrevich point out, their construction is {\em not} self-dual: the cofibrations in the dual DZ model structure do not usually agree with the cofibrations in the DZ model structure, and similarly for fibrations. 

\begin{prop}\label{CP generate model structures}
    Given a compatible pair $(F,C)$ on a finitely bicomplete preorder $\mathcal{A}$, the composite functor $FC: \mathcal{A}\rightarrow\mathcal{A}$ is a center associated to $\mathcal{I}_{FC}$. Consequently there exists a DZ model structure on $\mathcal{A}$ whose set of weak equivalences is $\mathcal{I}_{FC}$, and in which every semicofibrant object is cofibrant.
\end{prop}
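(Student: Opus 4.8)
The plan is to verify directly that $\chi := FC$ satisfies the two defining properties of a center associated to $\mathcal{W} := \mathcal{I}_{FC}$, and then to read off the model structure from Proposition \ref{dz method}. Before checking the center axioms, I would first record that $\mathcal{I}_{FC}$ is a decomposable subcategory, which is automatic in a preorder because $FC$ is order-preserving. Indeed, if $f = hg$ factors a morphism inverted by $FC$, say $X \xrightarrow{g} Z \xrightarrow{h} Y$ with $FCX \cong FCY$, then monotonicity gives $FCX \leq FCZ \leq FCY \cong FCX$, so $FCZ$ is squeezed between isomorphic objects and both $g$ and $h$ are inverted by $FC$. (This is the general fact that the class of maps inverted by any monotone endofunctor of a preorder is both sectile and retractile, hence decomposable.)

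The axiom C1 is then immediate and essentially tautological: by definition $\chi(f) = FCf$ is an isomorphism exactly when $f \in \mathcal{I}_{FC} = \mathcal{W}$. The real content is axiom C2. Writing the defining square for $A$ as the meet--join square with corners $A \wedge FCA$, $FCA$, $A$, and $A \vee FCA$ (finite products and coproducts in a preorder being meets and joins, which exist since $\mathcal{A}$ is finitely bicomplete), I would show that all four edges are inverted by $FC$. Unwinding, this reduces to the two isomorphisms
\[ FC(A \wedge FCA) \cong FCA \quad\text{and}\quad FC(A \vee FCA) \cong FCA. \]
I expect this to be the main step, and the key is to exhibit good bounds to which monotonicity of $FC$ can be applied. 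For the meet, I would use that $CA$ is a lower bound for both $A$ and $FCA$ (since $CA \leq A$ and $CA \leq F(CA)$), so $CA \leq A \wedge FCA \leq FCA$; applying $FC$ and using the identities $FCC \cong FC$ and $FCFC \cong FC$ sandwiches $FC(A \wedge FCA)$ between copies of $FCA$. Dually, for the join I would use that $FA$ is an upper bound for both $A$ and $FCA$ (since $A \leq FA$ and $FCA \leq FA$), so $FCA \leq A \vee FCA \leq FA$; applying $FC$ and using $FCF \cong FC$ sandwiches $FC(A \vee FCA)$ between copies of $FCA$. All the auxiliary identities $FF \cong F$, $CC \cong C$, $FC \cong CF$, $FCF \cong FC$, $FCC \cong FC$, and $FCFC \cong FC$ follow from idempotency of $F$ and $C$ together with the commutation $FC \cong CF$ from the compatible-pair hypothesis, so this bookkeeping is routine.

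With C1 and C2 established, Proposition \ref{dz method} produces a model structure on $\mathcal{A}$ with weak equivalences exactly $\mathcal{I}_{FC}$ and with acyclic fibrations equal to the maps having the right lifting property against all maps with semicofibrant codomain. For the final assertion, let $\mathcal{S}$ denote the class of maps with semicofibrant codomain, so that the acyclic fibrations are precisely $\Inj(\mathcal{S})$. Since the cofibrations of a model structure are $\Proj$ of the acyclic fibrations, the standard containment $\mathcal{S} \subseteq \Proj(\Inj(\mathcal{S}))$ shows that every map in $\mathcal{S}$ is a cofibration. In particular, if $X$ is semicofibrant then the map $0 \to X$ from the initial object has semicofibrant codomain, hence lies in $\mathcal{S}$, hence is a cofibration; that is, $X$ is cofibrant. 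I do not anticipate any obstacle in this last step—it is a formal lifting-property argument—so the whole difficulty is concentrated in the two collapse isomorphisms verifying C2.
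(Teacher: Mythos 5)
Your proposal is correct, and while it follows the same overall skeleton as the paper's proof (verify C1, verify C2 for the meet--join square, then invoke Proposition \ref{dz method}), your verification of C2 takes a genuinely different and leaner route. The paper establishes $FC(A)\leq FC(A\wedge FC(A))$ and $FC(A\vee FC(A))\leq FC(A)$ by invoking the second compatibility axiom (the lifting-type condition ``$X\leq CY$ and $FX\leq Y$ imply $FX\leq CY$'') twice, with carefully chosen instances $X=CA$, $Y=F(A\wedge FC(A))$ and $X=C(A\vee FC(A))$, $Y=F(A)$. Your sandwich argument obtains the same two inequalities by simply applying the monotone functor $FC$ to the chains $CA\leq A\wedge FCA\leq FCA$ and $FCA\leq A\vee FCA\leq FA$ and collapsing via $FCC\cong FC\cong FCF\cong FCFC$; this uses only idempotency of $F$ and $C$ and the commutation $FC\cong CF$, never the second compatibility axiom. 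So your argument is both shorter and strictly more general: it shows the conclusion holds for any commuting idempotent monad--comonad pair. Your direct monotonicity argument for decomposability of $\mathcal{I}_{FC}$ is also cleaner than the paper's appeal to Proposition \ref{weak equivs} (which is stated for model preorders rather than bare compatible pairs), and your closing lifting-property argument that $\mathcal{S}\subseteq\Proj(\Inj(\mathcal{S}))$ forces semicofibrants to be cofibrant correctly fills in a step the paper dismisses as clear. No gaps.
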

\begin{proof}
It is clear from the definition of the DZ model structure that, if it exists, then the weak equivalences are precisely $\mathcal{I}_{FC}$, and the semicofibrant objects are cofibrant. Hence our task is to show existence of the DZ model structure.
It follows easily from Proposition \ref{weak equivs} that $\mathcal{W}$ is decomposable, so by Proposition \ref{dz method}, all we need is to show that the functor $\chi: \mathcal{A} \rightarrow \mathcal{A}$ defined by $\chi(A) = FC(A)$ is a center associated to $\mathcal{W} = \mathcal{I}_{FC}$.

    Condition C1 in Definition \ref{definition of a center} is satisfied by $\chi$ simply because of how $\mathcal{W}$ is defined.
    All we need now is to show that for any object $A \in \mathcal{A}$, the diagram 
    \begin{center}
        \begin{tikzcd}
            A \times FC(A) \arrow[r] \arrow[d] & FC(A) \arrow[d] \\
            A \arrow[r] & A \cup FC(A)
        \end{tikzcd}
    \end{center}
    lies in $\mathcal{W}$, that is, the diagram 
    \begin{center}
    \begin{equation}\label{center equation}
        \begin{tikzcd}
            FC(A \wedge FC(A)) \arrow[r] \arrow[d] & FC(A) \arrow[d] \\
            FC(A) \arrow[r] & FC(A \vee FC(A))
        \end{tikzcd}
    \end{equation}    
    \end{center}
    must have all the maps as isomorphisms.
    For that purpose, consider the special case of the second axiom of Definition \ref{def of compatible pair} in which $X$ is $CA$, and $Y$ is $F(A \wedge FC(A))$. The universal property of the product (i.e., the meet) in $\mathcal{A}$ yields the dotted arrow in the diagram
    \begin{center}
        \begin{tikzcd}
C(A) \arrow[rr] \arrow[dd] \arrow[rd, dashed] &                                      & FC(A) \arrow[dd] \\
                                              & A \wedge FC(A) \arrow[ru] \arrow[ld] &                  \\
A \arrow[rr]                                  &                                      & F(A)            .
\end{tikzcd}
    \end{center}
Hence $C(A) \leq A \wedge FC(A) \leq F(A \wedge FC(A))$. Hence $C(A) \cong CC(A) \leq CF(A \wedge FC(A))$, i.e., $X \leq CY$. Applying the functor $F$ to the map $CA \rightarrow A \wedge FC(A)$ yields that $FX \leq Y$. Hence, by the first and second axioms in Definition \ref{def of compatible pair}, we have $FC(A) \leq FC(A \wedge FC(A))$. Since we are in a preorder, the top horizontal map in \ref{center equation} is thus an isomorphism.

We now aim to show that the right-hand vertical map in \eqref{center equation} is also an isomorphism in $\mathcal{A}$. By the universal property of the coproduct (i.e., the join), we have a morphism filling in the dotted arrow in the commutative diagram
    \begin{center}
        \begin{tikzcd}
C(A) \arrow[rr] \arrow[dd] &                                 & FC(A) \arrow[dd] \arrow[ld] \\
                           & A \vee FC(A) \arrow[rd, dashed] &                             \\
A \arrow[rr] \arrow[ru]    &                                 & F(A).                       
\end{tikzcd}
    \end{center}
We apply $F$ to the resulting inequalities \begin{equation}\label{ineqs 22} C(A \vee FC(A)) \leq A \vee FC(A) \leq F(A),\end{equation} obtaining $FC(A \vee FC(A)) \leq FF(A) \cong F(A)$. Now consider the case of the second axiom in Definition \ref{def of compatible pair} in which $X = C(A \vee FC(A))$ and $Y = F(A)$. We have just shown that $FX \leq Y$. Applying $C$ to \eqref{ineqs 22} yields that $C(A \vee FC(A)) \cong CC(A \vee FC(A)) \leq CF(A)$, i.e., $X \leq CY$. The first and second axioms of Definition \ref{def of compatible pair} consequently give us that $FC(A \vee FC(A)) \leq FC(A)$, i.e., the right-hand vertical map in \ref{center equation} is an isomorphism. 

We have shown that the top horizontal and right-hand vertical maps in the square \eqref{center equation} are isomorphisms in the preorder $\mathcal{A}$. In a preorder, the class of all isomorphisms is a decomposable subcategory, hence all maps in \eqref{center equation} are isomorphisms, as desired.
\end{proof}

Given a compatible pair $(F,C)$ on a preorder $\mathcal{A}$, Proposition \ref{semifibrancy characterization}, establishes that every $C$-cofibrant object is semicofibrant. If $(F,C)$ is not {\em strongly} compatible, then it may happen that not every semicofibrant is $C$-cofibrant. For example, on the power set of a two-element set, precisely three out of the 23 model structures---namely, $(4,4)$ and $(6,6)$ and $(8,8)$ from \eqref{n2 model structs}---have at least one semicofibrant object which is not $C$-cofibrant\footnote{From inspection of \eqref{n2 model structs}, one observes that in each of the model structures on $P(\{a,b\})$, either all the semicofibrants are cofibrant, or all the semifibrants are fibrant, or both. The smallest example of a model preorder in which there are {\em both} non-fibrant semifibrants and non-cofibrant semicofibrants is the following model structure on the total order with $5$ elements:
\[\xymatrix{
 \bullet \ar[r]^f \ar@/_/[rr]_c & \bullet \ar[r]^c & \bullet \ar[r]^f\ar@/_/[rr]_f & \bullet \ar[r]^c & \bullet
}\]
where all maps are weak equivalences, the maps marked $c$ and $f$ are cofibrations and fibrations respectively, and the maps not drawn are neither cofibrations nor fibrations.}. 
Hence the DZ model structure doesn't solve our problem: its cofibrants are not precisely the $C$-cofibrant objects.

Next we consider the Stanculescu model structure, defined in \cite{MR3274498}:
\begin{definition}\label{def of stanculescu model struct}
Given a compatible pair $(F,C)$ on a bicomplete preorder $\mathcal{A}$, the {\em Stanculescu model structure} is the model structure on $\mathcal{A}$ defined as follows:
\begin{itemize}
\item The set of weak equivalence is the set $\mathcal{I}_{FC}$ of morphisms inverted by $FC$. 
\item The set of acyclic cofibrations is $\mathcal{I}_F$.
\item Consequently the set of fibrations is the set $\Inj(\mathcal{I}_F)$ of maps with the right lifting property with respect to $\mathcal{I}_F$.
\item Consequently the set of cofibrations is the set $\Proj(\mathcal{I}_{FC}\cap \Inj(\mathcal{I}_F))$ of maps with the left lifting property with respect to $\mathcal{I}_{FC}\cap \Inj(\mathcal{I}_F)$. 
\end{itemize}
\end{definition}
Stanculescu's construction is more general, applying to bicomplete categories, not just preorders. 

Stanculescu shows that the fibrant objects in the Stanculescu model structure are precisely the $F$-fibrant objects. Indeed, this is Stanculescu's motivation: it is why the title of Stanculescu's paper about this model structure, \cite{MR3274498}, is ``Constructing model categories with prescribed fibrant objects.'' This property of the Stanculescu model structure is a clear step forward for us, since are trying to produce a model structure on $\mathcal{A}$ in which the fibrant objects are the $F$-fibrants---but also in which {\em the cofibrant objects are the $C$-cofibrants}. One can easily show that, if there is such a model structure on $\mathcal{A}$, then its weak equivalences must be $\mathcal{I}_{FC}$. Hence the DZ model structure gets the weak equivalences right, and the Stanculescu model structure gets both the weak equivalences and the fibrants right.

However, the Stanculescu model structure still has too many cofibrants to solve our problem:
\begin{prop}\label{stanculescu cofibrants}
The cofibrant elements in the Stanculescu model structure on $\mathcal{A}$ are precisely the semicofibrant elements.
\end{prop}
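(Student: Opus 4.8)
The plan is to show both containments, using the explicit description of the cofibrations in the Stanculescu model structure together with the characterization of semicofibrancy from Definition \ref{def of semifib}. Recall that an object $X$ is cofibrant exactly when the unique map $0 \rightarrow X$ from the initial object is a cofibration, i.e. lies in $\Proj(\mathcal{I}_{FC}\cap \Inj(\mathcal{I}_F))$, and $X$ is semicofibrant exactly when $X \leq FCX$. In a preorder every lifting problem has at most one solution, so ``$0 \rightarrow X$ is a cofibration'' unwinds to the first-order condition that for every map $g:V\rightarrow W$ which is both in $\mathcal{I}_{FC}$ and in $\Inj(\mathcal{I}_F)$, and every inequality $X \leq W$, we already have $X\leq V$. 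Thus the whole proof amounts to comparing this lifting condition against the inequality $X\leq FCX$.

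First I would handle the easy direction, that cofibrant implies semicofibrant. The natural test map to feed into the cofibrancy condition is a cleverly chosen acyclic fibration landing on $FCX$ (or on $X$ itself). Concretely, I expect the counit/unit composite $CX \rightarrow X \rightarrow FX$, or rather a map built from the $C$-cofibrant replacement, to be in $\mathcal{I}_{FC}\cap \Inj(\mathcal{I}_F)$; lifting $X \leq FCX$ (which holds trivially after applying the defining inequalities) against it should force the semicofibrancy inequality. Alternatively, and perhaps more cleanly: since fibrations in the Stanculescu structure are $\Inj(\mathcal{I}_F)$ and the fibrant objects are the $F$-fibrants, one checks directly that the map $FCX \rightarrow FX$ is an acyclic fibration (it is inverted by $FC$ by idempotence, and it lies in $\Inj(\mathcal{I}_F)$ since its target is $F$-fibrant), and then lifts the square expressing $X \leq FX$ along it, extracting $X \leq FCX$.

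For the converse direction, that semicofibrant implies cofibrant, I would assume $X \leq FCX$ and verify the lifting property defining cofibrations. So let $g:V\rightarrow W$ lie in $\mathcal{I}_{FC}\cap \Inj(\mathcal{I}_F)$ and suppose $X \leq W$; I must deduce $X \leq V$. The key leverage is that $g\in\mathcal{I}_{FC}$ means $FCg$ is an isomorphism, so $FCV \cong FCW$, and $g\in\Inj(\mathcal{I}_F)$ means $g$ is a fibration, so by Proposition \ref{three conditions} (or its fibration analogue via $F$) one controls how $V$ sits relative to $FCV$. Combining $X\leq FCX$ with the monotone functors $F$ and $C$ applied to $X\leq W$ gives $FCX \leq FCW \cong FCV$, and the real work is to descend from $FCV$ back down to $V$ using the fibrancy (right-lifting) property of $g$ and semicofibrancy of the intermediate objects.

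The main obstacle I anticipate is exactly this last descent: translating the abstract lifting property $g\in\Inj(\mathcal{I}_F)$ into a usable inequality that lets one conclude $X\leq V$ rather than merely $X \leq FCV$ or $X \leq W$. The cleanest route is probably to show first that $X$ itself admits a factorization of $0\rightarrow X$ whose acyclic-cofibration part is an isomorphism (invoking Corollary \ref{retractile and sectile cor}), thereby reducing cofibrancy of $X$ to a statement about how $X$ relates to its image under $F$ and $C$; the semicofibrancy hypothesis $X\leq FCX$ is then precisely what is needed to make the intermediate object in that factorization coincide with $X$ up to isomorphism. I would organize the argument so that the two directions are genuinely dual to the already-proven facts about fibrant replacement, keeping in mind that in a preorder all these lifts are unique and the verifications reduce to chasing inequalities through the idempotent, commuting pair $(F,C)$.
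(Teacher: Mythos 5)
Your first direction (Stanculescu cofibrant implies semicofibrant) is essentially the paper's argument: test $0\rightarrow X$ against the single acyclic fibration $FCX\rightarrow FX$ and read off $X\leq FCX$ from the lift. One small correction there: the reason $FCX\rightarrow FX$ lies in $\Inj(\mathcal{I}_F)$ is that its \emph{source} $FCX$ is $F$-fibrant (if $A\rightarrow B$ is in $\mathcal{I}_F$ with $A\leq FCX$, then $B\leq FB\cong FA\leq F(FCX)\cong FCX$), not that its target is; your first floated candidate $CX\rightarrow FX$ is in fact generally \emph{not} in $\Inj(\mathcal{I}_F)$ for exactly this reason.

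The genuine gap is in the converse, semicofibrant implies cofibrant. Your main route --- push everything up through $FC$ to get $X\leq FCX\leq FCW\cong FCV$ and then ``descend from $FCV$ back down to $V$'' --- cannot work: there is no map $FCV\rightarrow V$ unless $V$ happens to be semifibrant, which is not part of the hypotheses on an acyclic fibration $g:V\rightarrow W$. You correctly identify this descent as the obstacle, but the fallback you offer (factor $0\rightarrow X$ and argue the intermediate object is $X$) is not an argument; you never identify the intermediate object or say why semicofibrancy forces it to coincide with $X$. The missing idea is a two-step lifting argument using concrete test maps in $\mathcal{I}_F$. First, since $g\in\mathcal{I}_{FC}$, the map $Cg:CV\rightarrow CW$ is in $\mathcal{I}_F$; lifting it against $g\in\Inj(\mathcal{I}_F)$ in the square with horizontal maps $CV\leq V$ and $CW\leq W$ gives $CW\leq V$, hence $CX\leq CW\leq V$. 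Second --- and this is where semicofibrancy actually enters, via the dual of Proposition \ref{semifibrancy characterization} --- the counit $CX\rightarrow X$ is itself in $\mathcal{I}_F$ precisely because $X$ is semicofibrant, so lifting $CX\rightarrow X$ against $g$ in the square with horizontal maps $CX\leq V$ and $X\leq W$ yields $X\leq V$, which is the required cofibrancy. Your sketch never isolates either of these two test maps, and in particular never uses the characterization of semicofibrancy as ``$CX\rightarrow X$ is inverted by $F$,'' which is the engine of the proof.
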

\begin{proof}
Consider a lifting problem
\[\xymatrix{
 0 \ar[d] \ar[r] & S \ar[d]^f \\ 
 X \ar[r] & T 
}\]
with $X$ semicofibrant, and with $f$ a Stanculescu acyclic fibration, i.e., $f\in \mathcal{I}_{FC}\cap \Inj(\mathcal{I}_F)$. Since $CS \rightarrow CT$ is in $\mathcal{I}_F$, it lifts over $S\rightarrow T$, i.e., $CT\leq S$. We now have $CX\leq CT \leq S$, hence a square
\begin{equation}\label{lifting problem 35851332}\xymatrix{
 CX \ar[r] \ar[d] & S \ar[d] \\ X\ar[r] & T
}\end{equation}
whose right-hand vertical map is in $\Inj(\mathcal{I}_F)$. Since $X$ is semicofibrant, the left-hand vertical map is in $\mathcal{I}_F$, by the dual of Proposition \ref{semifibrancy characterization}. Hence the lifting problem \eqref{lifting problem 35851332} is solvable, i.e., $X \leq S$, solving the original lifting problem as well. Hence every semicofibrant element of $\mathcal{A}$ is also a Stanculescu cofibrant.

For the converse: for an arbitrary element $X$ of $\mathcal{A}$, it is easy to check that $FCX\rightarrow X$ is in $\Inj(\mathcal{I}_F)$ and in $\mathcal{I}_{FC}$, hence also in $\mathcal{I}_{FC}\cap \Inj(\mathcal{I}_F)$, i.e., it is a Stanculescu acyclic fibration. Hence, if $X$ is a Stanculescu cofibrant, then the lifting problem
\[\xymatrix{
 0 \ar[d] \ar[r] & FCX \ar[d] \\ 
 X \ar[r] & FX 
}\]
has a solution, i.e., $X$ is semicofibrant.
\end{proof}
In the discussion preceding Definition \ref{def of stanculescu model struct}, we observed that for compatible pairs which are not strongly compatible, the $C$-cofibrants may fail to agree with the semicofibrant objects. Hence Proposition \ref{stanculescu cofibrants} shows that the Stanculescu model structure is not our desired model structure in which the cofibrants are the $C$-cofibrants and the fibrants are the $F$-fibrants.

\subsection{A model structure with prescribed fibrants {\em and} cofibrants}

One would ideally like to have a model structure on $\mathcal{A}$ with the same weak equivalences as the Stanculescu model structure, and whose cofibrations are those Stanculescu cofibrations which {\em aren't} of the form $0\rightarrow X$ with $X$ non-$C$-cofibrant. In other words, one would like to simply remove the undesired cofibrations from the Stanculescu model structure, and to thereby obtain the set of cofibrations of some other model structure, without changing the weak equivalences and without changing the fibrant objects, so that the cofibrant objects in the resulting model structure are precisely the $C$-cofibrants. Of course one does not expect this to work so simply, since removing maps from some class of maps usually is not a well-behaved operation at all.

The surprise is that this na\"{i}ve idea simply works, exactly as stated. Let $\mathcal{Z}$ denote the class of maps
\begin{align*}
 \mathcal{Z} &= \left\{ X\stackrel{f}{\longrightarrow} Y\in \Proj\left(\mathcal{I}_{FC}\cap \Inj(\mathcal{I}_F)\right) : X\ncong 0,\mbox{\ or\ } Y\mbox{\ is\ } C\mbox{-cofibrant}\right\}.
\end{align*}
Recall that, given a cocomplete category $\mathcal{A}$, a class of morphisms $\mathcal{M}$ in $\mathcal{B}$ is called {\em cofibrantly closed} if $\mathcal{M}$ is closed under retracts, composites and transfinite composites, contains all isomorphisms, and is stable under pushouts. For any weak factorization system $(\mathcal{L},\mathcal{R})$ on $\mathcal{A}$, the left class $\mathcal{L}$ is cofibrantly closed. 

A cofibrantly closed class of morphisms $\mathcal{M}$ is {\em cofibrantly generated} if there exists a {\em set} of morphisms $I$ of $\mathcal{A}$ such that $\mathcal{M}$ is the smallest cofibrantly closed class of morphisms in $\mathcal{M}$ containing $I$. Given a cofibrantly generated class $\mathcal{M}$ in a locally presentable category $\mathcal{A}$, Quillen's small object argument \cite{MR0223432} constructs a weak factorization system on $\mathcal{A}$ whose left class is precisely $\mathcal{M}$. See \cite[section 3]{MR1916156} for a nice brief discussion, \cite[Theorem 1.3]{rosickypreprint} for a direct statement of the relevant result, or \cite{MR2506256} for a paper-length development of the ideas.

In the case that the category $\mathcal{A}$ is a preorder, the smallness of $\mathcal{A}$ ensures both that $\mathcal{A}$ is locally presentable and that {\em every} cofibrantly closed class of morphisms in $\mathcal{A}$ is cofibrantly generated. The point is that, if we can show that $\mathcal{Z}$ is cofibrantly closed, then $\mathcal{Z}$ is the left class of some factorization system on the bicomplete preorder $\mathcal{A}$, so $\mathcal{Z}$ can plausibly be the class of cofibrations of some model structure on $\mathcal{A}$. But it is elementary to check that $\mathcal{Z}$ is indeed a cofibrantly closed class: it amounts to checking that $\Proj\left(\mathcal{I}_{FC}\cap \Inj(\mathcal{I}_F)\right)$ is a cofibrantly closed class (which we know, since is the class of cofibrations in the Stanculescu model structure), and that the class of $C$-cofibrant objects of $\mathcal{C}$ is closed under retracts (automatic since $\mathcal{A}$ is a preorder), pushouts and ordinal-indexed colimits (both satisfied since coreflective subcategories are closed under colimits). This will be the key to constructing the desired model structure on $\mathcal{A}$ in Theorem \ref{main construction thm}. We first need a lemma:
\begin{lemma}\label{lifting lemma}
Let $\mathcal{A}$ be a finitely complete preorder, and let $G: \mathcal{A}\rightarrow\mathcal{A}$ be a functor. Let $\mathcal{M}$ be a set of morphisms in $\mathcal{A}$ which is stable under pullbacks. Then $\mathcal{I}_G \cap \Proj(\mathcal{I}_G \cap\mathcal{M}) \subseteq \Proj(\mathcal{M})$. 
\end{lemma}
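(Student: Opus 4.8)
The plan is to translate every lifting-property assertion into an inequality in the preorder $\mathcal{A}$, and then to use finite completeness to replace an arbitrary test map in $\mathcal{M}$ by its pullback, which will turn out to lie in the smaller class $\mathcal{I}_G\cap\mathcal{M}$ against which $f$ is assumed to lift. Concretely, suppose $f\colon A\to B$ lies in $\mathcal{I}_G\cap\Proj(\mathcal{I}_G\cap\mathcal{M})$; I want to show $f\in\Proj(\mathcal{M})$. So I fix a morphism $g\colon X\to Y$ in $\mathcal{M}$ together with a lifting problem for $f$ against $g$. In a preorder this lifting problem is exactly the data of two inequalities, $A\leq X$ (the top map) and $B\leq Y$ (the bottom map), all commutativity being automatic; and solving the lifting problem means producing the inequality $B\leq X$.

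First I would form the pullback of $g$ along the map $B\to Y$. Since $\mathcal{A}$ is a finitely complete preorder, this pullback is simply the meet $X\wedge B$, and the resulting map $g'\colon X\wedge B\to B$ is the pullback of $g$ along $B\to Y$. Because $\mathcal{M}$ is stable under pullbacks, we get $g'\in\mathcal{M}$ for free.

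The key step, and the only one that uses the hypothesis $f\in\mathcal{I}_G$, is to verify that $g'$ also lies in $\mathcal{I}_G$, so that in fact $g'\in\mathcal{I}_G\cap\mathcal{M}$. Since $A\leq X$ and $A\leq B$, the universal property of the meet gives $A\leq X\wedge B$, and applying $G$ yields $GA\leq G(X\wedge B)$. But $f\in\mathcal{I}_G$ means $GA\cong GB$, so $GB\leq G(X\wedge B)$; combined with the automatic inequality $G(X\wedge B)\leq GB$ coming from $X\wedge B\leq B$, this shows $G(X\wedge B)\cong GB$, i.e.\ $g'\in\mathcal{I}_G$.

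Finally, since $f\in\Proj(\mathcal{I}_G\cap\mathcal{M})$ and $g'\in\mathcal{I}_G\cap\mathcal{M}$, I apply the lifting property of $f$ to the square with $f$ on the left, $g'$ on the right, top map $A\to X\wedge B$ (from $A\leq X\wedge B$), and bottom map the identity of $B$. The resulting lift is a map $B\to X\wedge B$, i.e.\ $B\leq X\wedge B\leq X$, which is precisely the inequality $B\leq X$ needed to solve the original lifting problem. I do not expect a serious obstacle here; the one point that requires care is the recognition that the pullback is the meet $X\wedge B$ and that the factor $f\in\mathcal{I}_G$ is exactly what promotes the pulled-back map $g'$ from $\mathcal{M}$ into the class $\mathcal{I}_G\cap\mathcal{M}$, where the assumed lifting property of $f$ becomes available.
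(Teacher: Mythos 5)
Your proof is correct and follows essentially the same route as the paper's: form the pullback of the test map along the bottom leg, use the sandwich $A\leq X\wedge B\leq B$ together with $f\in\mathcal{I}_G$ to show the pulled-back map lands in $\mathcal{I}_G\cap\mathcal{M}$, and then invoke the assumed lifting property. The only cosmetic difference is that you identify the pullback explicitly as the meet $X\wedge B$ and use the identity of $B$ as the bottom of the auxiliary square, which is immaterial in a preorder.
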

\begin{proof}
Suppose we are given a lifting problem
\[\xymatrix{
X \ar[d]_f \ar[r] & S \ar[d]^g \\ Y \ar[r] & T
}\]
with $f$ in $\mathcal{I}_G\cap \Proj(\mathcal{I}_G\cap \mathcal{M})$ and with $g\in \mathcal{M}$. Since $\mathcal{M}$ is stable under pullbacks, the map $Y\times_T S\rightarrow Y$ is in $\mathcal{M}$. By the universal property of the pullback, we have $X\leq Y\times_T S\leq Y$, so the fact that $GX \rightarrow GY$ is an isomorphism implies that $G(Y\times_T S) \rightarrow GY$ is an isomorphism. Hence the lifting problem
\[\xymatrix{
X \ar[d]_f \ar[r] & Y\times_T S \ar[d]^g \\ Y \ar[r] & T
}\]
is solvable, i.e., $Y\leq Y\times_T S$. Since $Y\times_T S \leq S$ as well, the original lifting problem is also solved.
\end{proof}

\begin{theorem}\label{main construction thm}
Given a compatible pair $(F,C)$ on a bicomplete preorder $\mathcal{A}$, there exists a model structure on $\mathcal{A}$ whose set of weak equivalences is $\mathcal{I}_{FC}$, and whose set of cofibrations is $\mathcal{Z}$. In that model structure, $F$ is the fibrant replacement monad, and $C$ is the cofibrant replacement comonad.
\end{theorem}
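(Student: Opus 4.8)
The plan is to equip $\mathcal{A}$ with the three classes $\cof=\mathcal{Z}$, $\we=\mathcal{I}_{FC}$, and $\fib=\Inj(\mathcal{Z}\cap\we)$, and to verify the definition of a model structure from the Conventions: that $\we$ satisfies two-out-of-three, and that $(\cof\cap\we,\fib)$ and $(\cof,\we\cap\fib)$ are weak factorization systems. Two-out-of-three for $\mathcal{I}_{FC}$ is automatic, since the class of morphisms inverted by any functor (here $FC$) enjoys it. The discussion immediately preceding the theorem shows that $\mathcal{Z}$ is cofibrantly closed, and since $\mathcal{A}$ is a small bicomplete preorder, every cofibrantly closed class is the left class of a weak factorization system; this produces $(\mathcal{Z},\Inj(\mathcal{Z}))$, which will serve as $(\cof,\we\cap\fib)$ once $\we\cap\fib$ is identified with $\Inj(\mathcal{Z})$.

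For the acyclic-cofibration system I would first identify $\mathcal{Z}\cap\we$ explicitly. Since $\mathcal{Z}\subseteq\cof^{\mathrm{St}}$, where $\cof^{\mathrm{St}}=\Proj(\mathcal{I}_{FC}\cap\Inj(\mathcal{I}_F))$ denotes the Stanculescu cofibrations, and since the Stanculescu acyclic cofibrations are $\cof^{\mathrm{St}}\cap\mathcal{I}_{FC}=\mathcal{I}_F$, while $\mathcal{I}_F\subseteq\mathcal{I}_{FC}$, one obtains $\mathcal{Z}\cap\we=\mathcal{Z}\cap\mathcal{I}_F$. This is an intersection of two cofibrantly closed classes ($\mathcal{Z}$, and $\mathcal{I}_F$, the left class of the Stanculescu acyclic-cofibration/fibration system), hence is itself cofibrantly closed, so $(\mathcal{Z}\cap\we,\fib)$ is a weak factorization system.

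The crux is then the compatibility $\Inj(\mathcal{Z})=\we\cap\fib$, which is what glues the two systems into a model structure. For $\Inj(\mathcal{Z})\subseteq\we$: given $p\colon S\to T$ in $\Inj(\mathcal{Z})$, the object $CT$ is $C$-cofibrant (as $C$ is idempotent), so $0\to CT$ lies in $\mathcal{Z}$; lifting the counit $CT\to T$ against $p$ yields $CT\leq S$, whence $CT\cong CCT\leq CS\leq CT$, so $Cp$, and therefore $FCp$, is an isomorphism. Together with $\Inj(\mathcal{Z})\subseteq\Inj(\mathcal{Z}\cap\we)=\fib$, this gives $\Inj(\mathcal{Z})\subseteq\we\cap\fib$. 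The reverse inclusion is exactly what Lemma \ref{lifting lemma} is designed to supply: applied in the opposite preorder with $G=FC$ and $\mathcal{M}=\mathcal{Z}$ (which is stable under pushouts, being cofibrantly closed), it gives $\mathcal{I}_{FC}\cap\Inj(\mathcal{I}_{FC}\cap\mathcal{Z})\subseteq\Inj(\mathcal{Z})$, i.e.\ $\we\cap\fib\subseteq\Inj(\mathcal{Z})$. (A retract argument, using $\Inj(\mathcal{Z})\subseteq\we$ and two-out-of-three, yields the same inclusion if one prefers not to dualize the lemma.) With this identity, $(\cof,\we\cap\fib)=(\mathcal{Z},\Inj(\mathcal{Z}))$ is a weak factorization system and the three classes constitute a model structure.

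Finally I would identify the replacement (co)monads. The cofibrant objects are precisely the $X$ with $0\to X\in\mathcal{Z}$, i.e.\ the $C$-cofibrants; factoring $0\to X$ as a $\mathcal{Z}$-map followed by an $\Inj(\mathcal{Z})$-map produces a $C$-cofibrant object $Q$ mapping to $X$ by a morphism in $\Inj(\mathcal{Z})\subseteq\mathcal{I}_C$, forcing $Q\cong CX$, so $C$ is the cofibrant replacement comonad (unique up to isomorphism by Proposition \ref{monadic replacements}). For the fibrant side, the $F$-fibrants are the Stanculescu fibrant objects, and since $\Inj(\mathcal{I}_F)\subseteq\Inj(\mathcal{Z}\cap\we)$, they are fibrant here as well. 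Conversely, if $X$ is fibrant then $X\to FX$ lies in $\mathcal{Z}\cap\we$: it is in $\mathcal{I}_F\subseteq\we$, and it is in $\mathcal{Z}$ because either $X\ncong 0$, or else $X\cong 0$ and $FX=F0$ is itself $C$-cofibrant, since $CF0\cong FC0\cong F0$ (using that $F$ commutes with $C$, and that $C0\cong 0$ as $0$ is initial); lifting $X\to FX$ against $X\to 1$ then gives $FX\leq X$, so $X$ is $F$-fibrant. Hence the fibrant objects are the $F$-fibrants and $F$ is the fibrant replacement monad. I expect this last identification, specifically the behavior at the initial object, to be the main subtlety, and the observation that $F0$ is automatically $C$-cofibrant is what makes it go through.
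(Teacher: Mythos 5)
Your proposal is correct, and its skeleton is the paper's: the same three classes, the same reduction to the single equality $\Inj(\mathcal{Z})=\mathcal{I}_{FC}\cap \Inj(\mathcal{Z}\cap\mathcal{I}_{FC})$, the same appeal to cofibrant closedness plus the small object argument for the factorizations, and the same use of (the dual of) Lemma \ref{lifting lemma} for the containment $\mathcal{I}_{FC}\cap\Inj(\mathcal{Z}\cap\mathcal{I}_{FC})\subseteq\Inj(\mathcal{Z})$. But two of your sub-arguments genuinely diverge from, and improve on, the paper's. For $\Inj(\mathcal{Z})\subseteq\mathcal{I}_{FC}$, the paper factors the given map through the Stanculescu model structure and splits into cases according to whether the cofibration part lies in $\mathcal{Z}$; you instead lift $0\rightarrow CT$ directly against $p\colon S\rightarrow T$, which is shorter and yields the stronger conclusion $\Inj(\mathcal{Z})\subseteq\mathcal{I}_C$ --- a strengthening you then reuse to pin down the cofibrant replacement as $CX$ itself rather than merely identifying the class of cofibrant objects. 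For the fibrant objects, the paper argues by contrapositive, invoking semicofibrancy of a hypothetical non-lifting $X$ and the auxiliary map $0\rightarrow CFX$; you lift $X\rightarrow FX$ against $X\rightarrow 1$ directly, and correctly isolate the one genuine wrinkle (membership of $X\rightarrow FX$ in $\mathcal{Z}$ when $X\cong 0$), resolving it with the observation that $F0\cong FC0\cong CF0$ is $C$-cofibrant. You also supply a justification the paper omits, namely that $\mathcal{Z}\cap\mathcal{I}_{FC}$ is cofibrantly closed, via the identification $\mathcal{Z}\cap\mathcal{I}_{FC}=\mathcal{Z}\cap\mathcal{I}_F$ (valid because $\mathcal{Z}$ consists of Stanculescu cofibrations and the Stanculescu acyclic cofibrations are exactly $\mathcal{I}_F$). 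The only thing I would ask you to make explicit is why $0\rightarrow CT$ lies in $\Proj(\mathcal{I}_{FC}\cap\Inj(\mathcal{I}_F))$ at all: this needs Proposition \ref{stanculescu cofibrants} together with the fact that every $C$-cofibrant object is semicofibrant (the dual of Proposition \ref{semifibrancy characterization}); both are available before the theorem, so this is a presentational point, not a gap.
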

\begin{proof}
In the model structure claimed to exist, the acyclic cofibrations must be $\mathcal{Z}\cap \mathcal{I}_{FC}$, hence the fibrations must be $\Inj(\mathcal{Z}\cap\mathcal{I}_{FC})$, hence the acyclic fibrations must be equal to both $\Inj(\mathcal{Z})$ (so that they are precisely the maps with the right lifting property with respect to the cofibrations) and also $\mathcal{I}_{FC}\cap \Inj(\mathcal{Z}\cap\mathcal{I}_{FC})$ (so that they are the fibrations which are also acyclic). If we can prove the equality
\begin{align}\label{eq 309951}
 \mathcal{I}_{FC}\cap \Inj(\mathcal{Z}\cap\mathcal{I}_{FC})
  &= \Inj(\mathcal{Z}),
\end{align}
then the model structure exists: $\mathcal{I}_{FC}$ has the two-out-of-three property since it is decomposable, and the relevant factorization systems exist by the small object argument, and by the fact that $\mathcal{Z}$ and $\mathcal{Z}\cap\mathcal{I}_{FC}$ are cofibrantly closed classes in $\mathcal{A}$.

We verify the equality \eqref{eq 309951} as follows. One direction is easy: the containment $ \mathcal{I}_{FC}\cap \Inj(\mathcal{Z}\cap\mathcal{I}_{FC})\subseteq \Inj(\mathcal{Z})$ is transparently a consequence of (the dual of) Lemma \ref{lifting lemma}, since $\mathcal{Z}$ is stable under pushouts, since it is a cofibrantly closed class.

For the containment in the other direction: since $\mathcal{Z}\cap\mathcal{I}_{FC}\subseteq \mathcal{Z}$, we have $\Inj(\mathcal{Z}\cap\mathcal{I}_{FC})\supseteq \Inj(\mathcal{Z})$. We still need to know that $\Inj(\mathcal{Z})$ is also in $\mathcal{I}_{FC}$. For this, we argue as follows: given a morphism $f: X\rightarrow Y$ in $\Inj(\mathcal{Z})$, use the Stanculescu model structure to factor $f$ as a Stanculescu cofibration $X \rightarrow \tilde{Y}$ followed by a Stanculescu acyclic fibration $\tilde{Y} \rightarrow Y$. Since $\tilde{Y} \rightarrow Y$ is in $\mathcal{I}_{FC}$, we need only to show that $X \rightarrow \tilde{Y}$ is also in $\mathcal{I}_{FC}$. By Proposition \ref{fibrations are retractile}, the map $X \rightarrow \tilde{Y}$ is in $\Inj(\mathcal{Z})$. There are two possibilities: either $X \rightarrow \tilde{Y}$ is in $\mathcal{Z}$, or it isn't.
\begin{description}
\item[If $X\rightarrow\tilde{Y}$ is in $\mathcal{Z}$] then it is in both $\mathcal{Z}$ and in $\Inj(\mathcal{Z})$, hence it is an isomorphism, and consequence $X\rightarrow Y$ is in $\mathcal{I}_{FC}$, as desired.
\item[If $X\rightarrow\tilde{Y}$ is not in $\mathcal{Z}$] then, since $X\rightarrow \tilde{Y}$ is a Stanculescu cofibration, we have that $X\cong 0$ and that $\tilde{Y}$ is semicofibrant but not $C$-cofibrant. The map $0 \rightarrow C\tilde{Y}$, on the other hand, is in $\mathcal{Z}$, so the lifting problem
\[\xymatrix{
0 \ar[r]\ar[d] & X \ar[d] \\
C\tilde{Y} \ar[r] & \tilde{Y} 
}\]
is solvable, i.e., $C\tilde{Y}\leq X$. Hence $CX\leq C\tilde{Y} \cong CC\tilde{Y}\leq CX$, i.e., $X\rightarrow \tilde{Y}$ is in $\mathcal{I}_C\subseteq\mathcal{I}_{FC}$. Hence $X\rightarrow Y$ is the composite of two maps in $\mathcal{I}_{FC}$, hence it is in $\mathcal{I}_{FC}$, as desired.
\end{description}
We conclude that $\Inj(\mathcal{Z}\cap\mathcal{I}_{FC})\subseteq \Inj(\mathcal{Z})$, and hence the two sides of \eqref{eq 309951} are indeed equal.


Hence the model structure exists. We now must check that its cofibrant and fibrant objects are as claimed. From the definition of $\mathcal{Z}$, it is clear that the cofibrant objects in our model structure are precisely those $C$-cofibrant members of $\mathcal{A}$ which are also cofibrant in the Stanculescu model structure on $\mathcal{A}$. By Proposition \ref{stanculescu cofibrants}, the Stanculescu cofibrants are precisely the semicofibrants, and by (the dual of) Proposition \ref{semifibrancy characterization}, every $C$-cofibrant is a semicofibrant. Hence the cofibrants in our model structure are simply the $C$-cofibrants.

As for the fibrants in our model structure: compared to Stanculescu's model structure, our model structure has the same weak equivalences and no more cofibrations, so every Stanculescu fibrant must be fibrant in our model structure. By design, the Stanculescu fibrants are precisely the $F$-fibrants. Hence every $F$-fibrant is fibrant in our model structure.

We must show the converse as well. We argue by contrapositive: suppose that $Y\in\mathcal{A}$ is fibrant in our model structure, but not $F$-fibrant. Then some Stanculescu acyclic cofibration must fail to lift over $Y\rightarrow 1$. The Stanculescu acyclic cofibrations with non-initial domain are all acyclic cofibrations in our model structure as well, so the only possibility is that we have a non-solvable lifting problem
\begin{equation}\label{lifting problem 100301}\xymatrix{
0 \ar[r]\ar[d] & Y \ar[d] \\
X \ar[r] & 1
}\end{equation}
with $0 \rightarrow X$ a Stanculescu acyclic cofibration, i.e., a morphism in $\mathcal{I}_{FC}\cap\Proj(\mathcal{I}_{FC}\cap\Inj(\mathcal{I}_F))$, with $X$ {\em not} $C$-cofibrant, and with $Y\rightarrow 1$ a fibration in our model structure. Since $X$ is semicofibrant by Proposition \ref{stanculescu cofibrants}, we have $X\leq CFX$. Since $0\rightarrow X$ and $X\rightarrow CFX$ are each in $\mathcal{I}_{FC}$, the map $0\rightarrow CFX$ is an acyclic cofibration in our model structure, so the lifting problem
\[\xymatrix{
0 \ar[r]\ar[d] & Y \ar[d] \\
CFX \ar[r] & 1
}\]
{\em does} have a solution, i.e., $CFX\leq Y$. Now we have $X\leq CFX\leq Y$, i.e., the lifting problem \eqref{lifting problem 100301} has a solution after all. Hence the Stanculescu fibrants are all fibrant in our model structure. Hence the fibrants in our model structure are precisely the $F$-fibrant objects, as desired.
\end{proof}

\begin{theorem}\label{characterization thm}
Let $\mathcal{A}$ be a bicomplete preorder. Let $C$ be an idempotent comonad on $\mathcal{A}$, and let $F$ be an idempotent monad on $\mathcal{A}$. Then the following conditions are equivalent:
\begin{itemize}
\item There exists a model structure on $\mathcal{A}$ whose cofibrant replacement comonad is naturally isomorphic to $C$, and whose fibrant replacement monad is naturally isomorphic to $F$.
\item The pair $(F,C)$ is a compatible pair.
\end{itemize}
\end{theorem}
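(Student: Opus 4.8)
My plan is to prove the two implications separately and to lean on results already established earlier in the paper, since the present theorem is essentially a repackaging of Proposition \ref{compatible pairs from model preorders} together with Theorem \ref{main construction thm}; I do not expect to need any genuinely new construction.

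For the forward implication I would assume that $\mathcal{A}$ carries a model structure whose cofibrant replacement comonad is naturally isomorphic to $C$ and whose fibrant replacement monad is naturally isomorphic to $F$. Since $\mathcal{A}$ is bicomplete it has an initial and a terminal object and is therefore a model preorder, so Proposition \ref{compatible pairs from model preorders} applies and tells me that the genuine fibrant and cofibrant replacement (co)monads $F',C'$ of this model structure form a compatible pair. I would then observe that the two defining conditions of a compatible pair (Definition \ref{def of compatible pair}) are phrased purely in terms of the underlying endofunctors and the order relation, hence are invariant under replacing $F',C'$ by naturally isomorphic (co)monads; consequently $(F,C)$, being isomorphic to $(F',C')$, is itself a compatible pair.

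For the backward implication I would simply invoke Theorem \ref{main construction thm}: given the compatible pair $(F,C)$, that theorem produces a model structure on $\mathcal{A}$ with weak equivalences $\mathcal{I}_{FC}$ and cofibrations $\mathcal{Z}$, and it identifies $F$ as the fibrant replacement monad and $C$ as the cofibrant replacement comonad of that structure. That is exactly the model structure demanded by the first bullet of the statement, so nothing further is needed. The idempotency hypotheses cause no friction: by Proposition \ref{monadic replacements} any replacement (co)monad is automatically idempotent, which both matches the hypotheses of the statement and is the sense in which idempotency is used inside Theorem \ref{main construction thm}.

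The only real obstacle sits inside Theorem \ref{main construction thm} rather than in the assembly above: one must show that the class $\mathcal{Z}$, obtained by deleting the offending cofibrations $0\to X$ (with $X$ semicofibrant but not $C$-cofibrant) from the Stanculescu cofibrations, is still the left class of a model structure — concretely, verifying the identity $\mathcal{I}_{FC}\cap \Inj(\mathcal{Z}\cap\mathcal{I}_{FC}) = \Inj(\mathcal{Z})$ and then checking that the cofibrant objects become exactly the $C$-cofibrants while the fibrant objects remain the $F$-fibrants. Granting that work, the proof of the present theorem reduces to citing the two results and noting the isomorphism-invariance of the compatible-pair conditions.
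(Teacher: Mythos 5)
Your proposal is correct and matches the paper's own proof, which cites Proposition \ref{compatible pairs from model preorders} for the forward implication and Theorem \ref{main construction thm} for the converse; your additional remark that the compatible-pair conditions are invariant under natural isomorphism of the (co)monads is a reasonable bit of bookkeeping the paper leaves implicit.
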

\begin{proof}
The implication in one direction is given by Proposition \ref{compatible pairs from model preorders}, and in the other direction, by Theorem \ref{main construction thm}.
\end{proof}

It is not generally the case that model structures on a bicomplete preorder are in one-to-one correspondence with compatible pairs. Theorem \ref{characterization thm} establishes that the former collection surjects on to the latter collection. This surjection is not necessarily injective. In the next subsection, we show that, if we restrict our attention to a certain natural class of model structures (the strong ones) and certain natural class of compatible pairs, we {\em do} get a one-to-one correspondence.

\subsection{Orthogonal pairs}
\label{Orthogonal pairs}

Our next task is to define orthogonal pairs. It will turn out (Corollary \ref{orth equals strongly}) that strong compatibility, defined in Definition \ref{def of compatible pair}, is in fact equivalent to orthogonality.


 \begin{definition}\label{def of orth cond}
     A compatible pair $(F,C)$ is said to satisfy \emph{the orthogonality condition} if, 
     for any lifting problem of the form
     \begin{equation}\label{orthogonal lifting problem}
\begin{tikzcd}
X \arrow[r] \arrow[d, "f"] & A \arrow[d, "g"] \\
Y \arrow[r]                              & B                             
\end{tikzcd}
\end{equation}
with $f\in \mathcal{I}_F$ and with $g\in \mathcal{I}_C$, a solution (equivalently, a unique solution) exists.
Compatible pairs satisfying the orthogonality condition will be called \emph{orthogonal pairs}.
 \end{definition}

Our next task is to prove Theorem \ref{CPs induce model structures}, which establishes a one-to-one correspodence between orthogonal pairs $(F,C)$ on a finitely bicomplete preorder $\mathcal{A}$ and strong model structures on $\mathcal{A}$. For that purpose, we will need some preliminary results. One such result is Proposition \ref{prelim chk thm}, which establishes that, given an idempotent monad $F$ on a finitely complete preorder $\mathcal{A}$, one can construct a factorization system on $\mathcal{A}$ whose left class is precisely the set $\mathcal{I}_F$ of morphisms inverted by the monad. This is roughly a special case of an old theorem \cite[Corollary 3.4]{MR779198} of Cassidy, H\'{e}bert, and Kelly, which applies to categories more generally, not only to preorders; however, the hypotheses of the Cassidy--H\'{e}bert--Kelly theorem include a certain set-theoretic assumption on the ambient category, namely, that the category must be ``finitely well-complete.'' This means that the category must be finitely complete, and it must admit all intersections of strong monomorphisms. 

For the purposes of this paper, we need only a version of the Cassidy--H\'{e}bert--Kelly theorem which applies to preorders, not general categories. In that level of generality, the assumption that the category is finitely well-complete is unnecessary: it suffices to assume that the preorder is finitely complete. Here is our claim:
\begin{prop}\label{prelim chk thm}
    Let $\mathcal{A}$ be a finitely complete preorder. Let $\mathcal{B}$ be a replete reflective subcategory of $\mathcal{A}$. Write $F: \mathcal{A}\rightarrow\mathcal{A}$ for the composite of the reflector functor $\mathcal{A}\rightarrow\mathcal{B}$ with the inclusion functor $\mathcal{B}\rightarrow\mathcal{A}$. Then there exists a factorization system $(\mathcal{L},\mathcal{R})$ on $\mathcal{A}$ such that $\mathcal{L}=\mathcal{I}_F$.
\end{prop}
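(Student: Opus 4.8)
The plan is to realize $(\mathcal{L},\mathcal{R})$ as the reflective factorization system associated to $F$, specialized to a preorder. First I would translate the data into order-theoretic terms: since $\mathcal{B}$ is reflective, $F$ is a monotone, inflationary (so $x\leq Fx$) and idempotent endofunction whose value $Fx$ is the least element of $\mathcal{B}$ lying above $x$. In these terms the left class is $\mathcal{L}=\mathcal{I}_F=\{\,X\leq Y : FX\cong FY\,\}$; note that $X\leq Y$ already forces $FX\leq FY$, so membership only adds the reverse inequality $FY\leq FX$. For the right class I would take the morphisms whose unit-naturality square is a pullback. In the preorder, using that $\mathcal{A}$ is finitely complete, that square is a pullback exactly when $X$ equals the meet $Y\wedge FX$, so I define
\[
\mathcal{R}=\{\,X\leq Y : Y\wedge FX \leq X\,\}.
\]
The inequality $X\leq Y\wedge FX$ always holds, so $\mathcal{R}$ really asks for the isomorphism $X\cong Y\wedge FX$.

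Next I would verify the lifting property $\mathcal{L}\subseteq\Proj(\mathcal{R})$. In a preorder, a lifting problem with left edge $A\leq B$ in $\mathcal{L}$ and right edge $C\leq D$ in $\mathcal{R}$, together with the square's comparisons $A\leq C$ and $B\leq D$, amounts to producing the single inequality $B\leq C$, and uniqueness of the lift is automatic. This comes from a short chain: $A\leq C$ gives $FA\leq FC$, and $FA\cong FB$ (as $A\leq B$ lies in $\mathcal{L}$) gives $B\leq FB\leq FC$; combined with $B\leq D$ this yields $B\leq D\wedge FC\leq C$, the last step by the defining property of $\mathcal{R}$. Hence $\mathcal{L}\perp\mathcal{R}$, giving both $\mathcal{L}\subseteq\Proj(\mathcal{R})$ and $\mathcal{R}\subseteq\Inj(\mathcal{L})$.

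The heart of the argument is the factorization, and here finite completeness and idempotency of $F$ do the work. Given $f\colon X\leq Y$ I would set $Z=Y\wedge FX$ and factor $f$ as $X\leq Z\leq Y$. The left factor $X\leq Z$ lies in $\mathcal{L}$ because $Z\leq FX$ forces $FZ\leq FFX\cong FX$, so $FX\cong FZ$; the right factor $Z\leq Y$ lies in $\mathcal{R}$ because $FZ\cong FX$ gives $Y\wedge FZ\cong Y\wedge FX=Z$. Thus every morphism factors as a map of $\mathcal{L}$ followed by a map of $\mathcal{R}$.

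Finally I would promote the two inclusions to equalities by the retract argument, which collapses to a one-line isomorphism check in a preorder. If $f\colon X\leq Y$ lies in $\Proj(\mathcal{R})$, then lifting $f$ against its own right factor $Z\leq Y$ (in the square whose bottom edge is $\mathrm{id}_Y$) yields $Y\leq Z$, whence $Y\cong Z$ and $f$ is isomorphic to the $\mathcal{L}$-map $X\leq Z$; since $\mathcal{I}_F$ is closed under isomorphism, $f\in\mathcal{L}$. Dually, if $g\colon C\leq D$ lies in $\Inj(\mathcal{L})$, lifting its left factor $C\leq Z'$ (with $Z'=D\wedge FC$) against $g$ gives $Z'\leq C$, so $g$ is isomorphic to the $\mathcal{R}$-map $Z'\leq D$ and hence lies in $\mathcal{R}$. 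This gives $\mathcal{L}=\Proj(\mathcal{R})$ and $\mathcal{R}=\Inj(\mathcal{L})$, so $(\mathcal{L},\mathcal{R})$ is a factorization system (orthogonal, by Proposition \ref{factorization systems are functorial}) with $\mathcal{L}=\mathcal{I}_F$. The only real subtlety is spotting that $Z=Y\wedge FX$ is the correct intermediate object and that the right class must be described by the pullback/meet condition rather than, say, by $F$-fibrancy of the codomain; once that is identified, every verification is immediate from the preorder structure, and the set-theoretic hypotheses of the Cassidy--H\'ebert--Kelly theorem are never needed.
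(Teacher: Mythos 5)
Your proof is correct and follows essentially the same route as the paper: both factor $X\leq Y$ through the meet $Y\wedge FX$ (the pullback of $Ff$ along the unit $Y\to FY$), with the left factor landing in $\mathcal{I}_F$ by idempotency of $F$. The only difference is presentational — you describe $\mathcal{R}$ explicitly by the meet condition and carry out the lifting and retract arguments in full, whereas the paper takes $\mathcal{R}=\Inj(\mathcal{L})$ and leaves those verifications to the reader.
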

We give a self-contained proof of Proposition \ref{prelim chk thm} to demonstrate the sufficiency of the hypothesis that the preorder is finitely complete. We claim no novelty, though: the same result, in greater generality, also appeared as Proposition 1.6 in \cite{MR1916156}.

\begin{proof} 
Let $\mathcal{L}$ be the class of morphisms in $\mathcal{A}$ which have the left lifting property with respect to every morphism in $\mathcal{B}$.
One verifies easily that the class $\mathcal{L}$ coincides with the class $\mathcal{I}_F$. 
Let $\mathcal{R} = \Inj(\mathcal{L})$. 

We must show that 
each morphism $f: X \rightarrow Y$ in $\mathcal{A}$ factors as a morphism in $\mathcal{L}$ followed by a morphism in $\mathcal{R}$.
To that end, consider the following commutative diagram
   \begin{center}
   \begin{equation}\label{chk diagram}
\begin{tikzcd}
A \arrow[rrd, "\eta_A", bend left] \arrow[rdd, "f"', bend right] \arrow[rd, "w"] &             &   \\
                & B \times  FA \arrow[d, "v"] \arrow[r, "u"'] &  FA \arrow[d, " Ff"] \\
                & B \arrow[r, "\eta_B"]                              &  FB                        
\end{tikzcd}
\end{equation}
   \end{center}
in which the evident square is a pullback square. We claim that the desired factorization of $f$ is the factorization $f = v\circ w$ along the left-hand side of \eqref{chk diagram}.
Since $F(f) \in  \mathcal{R}$, $v$ is a pullback of a morphism in $\mathcal{R}$, hence is also in $\mathcal{R}$. Since $A\leq B\times FA\leq FA$, we have $FA\cong F(B\times FA)$, i.e., $w$ is in $\mathcal{I}_F = \mathcal{L}$, as desired.
\end{proof}

We do not claim the next result is novel. It is the same as Corollary 3.4 in Cassidy--H\'{e}bert--Kelly \cite{MR779198}. except that we have different hypotheses. The category $\mathcal{A}$ is assumed in \cite[Corollary 3.4]{MR779198} to be a finitely well-complete category, i.e., a finitely complete category which admits all intersections of strong monomorphisms.  We are able to weaken assumption of finite well-completeness to finite completeness, because we include the additional hypothesis that $\mathcal{A}$ is a preorder. 


Here is the idea. 
Let $\mathcal{A}$ be a finitely complete preorder, let $RFS(\mathcal{A})$ be the set of isomorphism classes of retractile factorization systems on $\mathcal{A}$, and let $RRS(\mathcal{A})$ be the set of replete reflective subcategories of $\mathcal{A}$. It is well-known that the latter is in natural bijection with the collection of idempotent monads on $\mathcal{A}$ (even if $\mathcal{A}$ is a general category, not necessarily a preorder). We claim that $RRS(\mathcal{A)}$ and $RFS(\mathcal{A})$ are also in bijection with each other.

An explicit bijection is as follows. Given a factorization system $(\mathcal{L},\mathcal{R})$ on $\mathcal{A}$, call an element $X\in\mathcal{A}$ {\em $\mathcal{R}$-fibrant} if the map $X\rightarrow 1$ is in $\mathcal{R}$.
Now let $\xi: RFS(\mathcal{A}) \rightarrow RRS(\mathcal{A})$ be the function which sends a retractile factorization system $(\mathcal{L},\mathcal{R})$ to the subset of $\mathcal{A}$ consisting of the $\mathcal{R}$-fibrant elements.
In the other direction, let $\zeta: RRS(\mathcal{A}) \rightarrow RFS(\mathcal{A})$ be the function which sends a reflective subcategory $\mathcal{B}$ of $\mathcal{A}$ to the factorization system $(\mathcal{L},\mathcal{R})$ constructed in Proposition \ref{prelim chk thm}, i.e., $\mathcal{L}$ is the set of morphisms in $\mathcal{A}$ inverted by the reflector functor $\mathcal{A}\rightarrow\mathcal{B}$. This characterization of $\mathcal{L}$ also makes it clear that the class of morphisms $\mathcal{L}$ is retractile, i.e., the factorization system $(\mathcal{L},\mathcal{R})$ is retractile, as claimed.

It is routine to check that the functions $\zeta$ and $\xi$ are mutually inverse, yielding the result:
\begin{theorem}\label{idem monads are bijective to retractile fact sys}
    For any finitely bicomplete preorder $\mathcal{A}$, the functions $\zeta: RRS(\mathcal{A}) \rightarrow RFS(\mathcal{A})$ and $\xi: RFS(\mathcal{A}) \rightarrow RRS(\mathcal{A})$ are mutually inverse. Consequently, idempotent monads on $\mathcal{A}$ are in one-to-one correspondence with retractile factorization systems on $\mathcal{A}$.
\end{theorem}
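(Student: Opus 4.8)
The plan is to verify that the two composites $\xi\circ\zeta$ and $\zeta\circ\xi$ are identities, after first confirming that both functions are well-defined. For $\zeta$, the only point needing attention is that the factorization system $(\mathcal{I}_F,\Inj(\mathcal{I}_F))$ really is retractile; this is automatic in a preorder, since $F$ is monotone, so if $g\circ f\in\mathcal{I}_F$ with $f\colon X\to Y$ and $g\colon Y\to Z$, then $FX\cong FZ$, and the squeeze $FX\le FY\le FZ\cong FX$ forces $FX\cong FY$, i.e.\ $f\in\mathcal{I}_F$. For $\xi$, I would record that the $\mathcal{R}$-fibrant objects form a replete reflective subcategory: repleteness is immediate from $\mathcal{R}$ containing isomorphisms and being closed under composition, while the reflection of $X$ is the object $FX$ appearing in the factorization of $X\to 1$ as an $\mathcal{L}$-map $X\to FX$ followed by an $\mathcal{R}$-map $FX\to 1$, whose universal property follows by lifting the $\mathcal{L}$-map $X\to FX$ against $Z\to 1$ for any $\mathcal{R}$-fibrant $Z$ receiving a map from $X$.

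For $\xi\circ\zeta=\mathrm{id}$, I would start from a replete reflective subcategory $\mathcal{B}$ with reflector $F$, form $(\mathcal{L},\mathcal{R})=\zeta(\mathcal{B})$ with $\mathcal{L}=\mathcal{I}_F$ and $\mathcal{R}=\Inj(\mathcal{I}_F)$, and show that the $\mathcal{R}$-fibrant objects are precisely the objects of $\mathcal{B}$. If $X\in\mathcal{B}$, so $FX\cong X$, then to see $X\to 1\in\mathcal{R}$ I take any $g\colon A\to B$ in $\mathcal{I}_F$ with $A\le X$; since $FA\cong FB$ and $FA\le FX\cong X$, I obtain $B\le FB\cong FA\le X$, producing the required lift. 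Conversely, if $X$ is $\mathcal{R}$-fibrant, the unit $\eta_X\colon X\to FX$ lies in $\mathcal{I}_F=\mathcal{L}$ by idempotency of $F$, so lifting $\eta_X$ against $X\to 1$ yields $FX\le X$; with $X\le FX$ this gives $X\cong FX$, hence $X\in\mathcal{B}$ by repleteness.

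For $\zeta\circ\xi=\mathrm{id}$, I would begin with a retractile factorization system $(\mathcal{L},\mathcal{R})$, let $\mathcal{B}=\xi(\mathcal{L},\mathcal{R})$ be its $\mathcal{R}$-fibrant objects with reflector $F$, and show $\mathcal{I}_F=\mathcal{L}$, whence $\Inj(\mathcal{I}_F)=\Inj(\mathcal{L})=\mathcal{R}$. The inclusion $\mathcal{L}\subseteq\mathcal{I}_F$ needs no extra hypothesis: for $f\colon X\to Y$ in $\mathcal{L}$, lifting $f$ against the $\mathcal{R}$-map $FX\to 1$ gives $Y\le FX$, hence $FY\le FX$, and with the monotonicity bound $FX\le FY$ this makes $Ff$ an isomorphism. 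The reverse inclusion $\mathcal{I}_F\subseteq\mathcal{L}$ is where retractility is essential, and it is the crux of the argument: given $f$ with $Ff$ an isomorphism, the naturality relation $\eta_Y\circ f=Ff\circ\eta_X$ exhibits $\eta_Y\circ f$ as a composite of the $\mathcal{L}$-map $\eta_X$ with an isomorphism, hence $\eta_Y\circ f\in\mathcal{L}$, and then retractility of $\mathcal{L}$ (Definitions \ref{retractile and sectile classes} and \ref{def of retractile fs}) forces $f\in\mathcal{L}$.

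Finally I would assemble these two computations to conclude that $\zeta$ and $\xi$ are mutually inverse, and combine this with the standard bijection between idempotent monads and replete reflective subcategories to obtain the stated correspondence. The only genuinely delicate step is the inclusion $\mathcal{I}_F\subseteq\mathcal{L}$: it is precisely where the retractility hypothesis enters and cannot be removed, matching the fact that $\mathcal{L}\subseteq\mathcal{I}_F$ holds for \emph{every} factorization system on a preorder while equality can fail without retractility.
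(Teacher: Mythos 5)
Your proof is correct, and its overall architecture coincides with the paper's: verify that both composites $\xi\circ\zeta$ and $\zeta\circ\xi$ are identities, with the containments $\mathcal{B}\subseteq(\xi\circ\zeta)(\mathcal{B})$, its reverse, and $\mathcal{L}\subseteq\mathcal{I}_F$ handled by essentially the same lifting arguments the paper uses. Where you genuinely diverge is the crux, the inclusion $\mathcal{I}_F\subseteq\mathcal{L}$. The paper factors $f$ with both systems $(\mathcal{L},\mathcal{R})$ and $(\mathring{\mathcal{L}},\mathring{\mathcal{R}})$, produces a comparison map $g$ between the two middle objects via the lifting property, shows $g\in\mathring{\mathcal{L}}$ by sectility, shows $g\in\mathcal{L}$ by applying retractility to the composite $C\to E\to FE$, and concludes that the two middle objects are isomorphic, so that the right-hand piece of the $(\mathcal{L},\mathcal{R})$-factorization of $f$ is invertible. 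You instead apply retractility once, directly, to the composite $\eta_Y\circ f=Ff\circ\eta_X$, which lies in $\mathcal{L}$ because it is an $\mathcal{L}$-map followed by an isomorphism; retractility then hands you $f\in\mathcal{L}$ immediately. Both arguments invoke retractility exactly once and at the analogous moment, but yours avoids introducing the auxiliary factorization and the sectility step entirely, so it is shorter and makes it more transparent that retractility is the only extra hypothesis doing work. Your additional checks that $\zeta$ lands in retractile factorization systems and that $\xi$ lands in replete reflective subcategories are worth keeping: the paper asserts the former in a sentence and leaves the latter implicit.
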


Recall that, in Definition \ref{def of strong model structures}, we defined a {\em strong} model structure to be one whose acyclic fibrations are sectile and whose acyclic cofibrations are retractile. Equivalently: a model structure is strong if and only if its acyclic fibrations and acyclic cofibrations are decomposable (Definition \ref{definition of a center}). 
\begin{theorem}\label{CPs induce model structures}
Let $\mathcal{A}$ be a finitely bicomplete preorder, and let $(F,C)$ be an orthogonal pair on $\mathcal{A}$. Then there exists a strong model structure on $\mathcal{A}$ whose acyclic fibrations are precisely the morphisms inverted by $F$, and whose acyclic cofibrations are precisely the morphisms inverted by $C$. The fibrant replacement monad and cofibrant replacement comonad of this model structure are $F$ and $C$, respectively.

Furthermore, every strong model structure on $\mathcal{A}$ arises from this construction. Consequently we have a bijection between strong model structures on $\mathcal{A}$, and isomorphism classes of orthogonal pairs on $\mathcal{A}$.
\end{theorem}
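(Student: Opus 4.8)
The plan is to prove the two halves (existence from an orthogonal pair, and the reverse assignment) and then check they are mutually inverse. For the forward direction, suppose $(F,C)$ is an orthogonal pair. Since every orthogonal pair is in particular a compatible pair, Theorem \ref{main construction thm} already supplies a model structure on $\mathcal{A}$ with weak equivalences $\mathcal{I}_{FC}$ whose fibrant replacement monad is $F$ and whose cofibrant replacement comonad is $C$. This immediately settles both the existence claim and the identification of the replacement (co)monads; what remains is to exploit the extra orthogonality hypothesis to show that this model structure is \emph{strong} and that its acyclic fibrations and acyclic cofibrations are precisely $\mathcal{I}_F$ and $\mathcal{I}_C$. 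An alternative, more self-contained route I could take is to build the two factorization systems attached to $F$ and to $C$ directly from Theorem \ref{idem monads are bijective to retractile fact sys} (whose left/right classes are governed by Proposition \ref{prelim chk thm} and its dual), and then invoke the orthogonality condition of Definition \ref{def of orth cond} as exactly the cross-lifting property $\mathcal{I}_F \perp \mathcal{I}_C$ that allows the two factorization systems to be glued into a single model structure.

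Next I would verify strength. By Definition \ref{def of strong model structures} it suffices to show that the acyclic fibrations and acyclic cofibrations are decomposable (i.e.\ sectile and retractile). In a preorder this is automatic for any class of the form $\mathcal{I}_G$: if $a \le b \le c$ and $G(a \to c)$ is invertible, then since $G$ is order-preserving we have $Ga \le Gb \le Gc$ together with $Ga \cong Gc$, which squeezes $Gb$ and forces $Ga \cong Gb \cong Gc$; hence both $a \to b$ and $b \to c$ lie in $\mathcal{I}_G$. Once the acyclic fibrations and acyclic cofibrations have been identified with the classes inverted by the replacement (co)monads (via Proposition \ref{sectile fibs prop} and Proposition \ref{three conditions}), this decomposability gives strength for free. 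The orthogonality condition is what ensures the acyclic classes are contained in the corresponding non-acyclic classes, so that one genuinely has a model structure and not merely an unrelated pair of factorization systems, and Proposition \ref{weak equivs} identifies the weak equivalences with $\mathcal{I}_{FC}$.

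For the reverse assignment and the bijection, let $\mathcal{M}$ be an arbitrary strong model structure on $\mathcal{A}$. Proposition \ref{monadic replacements} extracts from $\mathcal{M}$ an idempotent fibrant replacement monad $F$ and an idempotent cofibrant replacement comonad $C$; Proposition \ref{compatible pairs from model preorders} shows that $(F,C)$ is a compatible pair, and, since $\mathcal{M}$ is strong, that it is \emph{strongly} compatible, whence an orthogonal pair by Corollary \ref{orth equals strongly}. To see that the two assignments are mutually inverse, I would argue in both directions. Starting from an orthogonal pair $(F,C)$, forming the model structure of the forward direction, and reading off its replacement (co)monads recovers $(F,C)$ up to isomorphism, by the identification already established. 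Starting instead from a strong $\mathcal{M}$, extracting $(F,C)$, and rebuilding, both $\mathcal{M}$ and the rebuilt structure are strong with the same $(F,C)$; in a strong model structure the acyclic fibrations and acyclic cofibrations are pinned down by $F$ and $C$ (Proposition \ref{sectile fibs prop}), and the cofibrations, fibrations, and weak equivalences are then determined from them by lifting and two-out-of-three, so the two model structures must coincide.

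The main obstacle is this final rigidity step: proving that a strong model structure is reconstructed in its entirety from its replacement (co)monads. Strength is indispensable here, and it is precisely the property whose failure breaks the correspondence in general. For a non-strong model structure the acyclic (co)fibrations are not determined by $(F,C)$ alone — this is exactly the content of Proposition \ref{sectile fibs prop} failing — so distinct model structures may share the same compatible pair, which is why the correspondence of Theorem \ref{characterization thm} is only surjective. Confirming that in the strong case Proposition \ref{sectile fibs prop} does apply, so that the acyclic classes and hence the whole model structure are recovered from $(F,C)$, is the crux that upgrades surjectivity to a genuine bijection.
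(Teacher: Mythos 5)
Your proposal has the right architecture (construct forward, reconstruct backward, check rigidity), and two of its ingredients are genuinely correct: the observation that any class of the form $\mathcal{I}_G$ is automatically decomposable in a preorder, and the final rigidity argument that a strong model structure is determined by its replacement (co)monads. But the forward direction as written is circular at its crux. You start from the model structure of Theorem \ref{main construction thm} and propose to identify its acyclic fibrations with the class inverted by the cofibrant replacement comonad ``via Proposition \ref{sectile fibs prop} and Proposition \ref{three conditions},'' and then deduce strength from decomposability. However, Proposition \ref{three conditions} only gives the containment of the acyclic fibrations in $\mathcal{I}_C$, while Proposition \ref{sectile fibs prop} says the reverse containment holds \emph{if and only if} the acyclic fibrations are sectile --- i.e., it converts the identification you want into exactly the strength statement you are trying to prove. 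Neither proposition can supply the identification; one of the two sides must be proved by hand using orthogonality. (For example: if $Cf$ is an isomorphism, then $f\in\mathcal{I}_{FC}$, and orthogonality gives $\mathcal{I}_C\subseteq \Inj(\mathcal{I}_F)$, so $f$ is a Stanculescu acyclic fibration and hence lies in $\Inj(\mathcal{Z})$, which is the class of acyclic fibrations of that model structure; a similar orthogonality argument is needed on the cofibration side, where one must check that a map $0\rightarrow Y$ in $\mathcal{I}_F$ forces $Y$ to be $C$-cofibrant.) The paper sidesteps this entirely by taking your ``alternative route'': it declares the acyclic cofibrations to be $\mathcal{I}_F$ and the acyclic fibrations to be $\mathcal{I}_C$ using the two factorization systems of Proposition \ref{prelim chk thm}, and then verifies the model structure axioms directly, with orthogonality entering as the containment $\mathcal{L}_F\subseteq\mathcal{L}_C=\Proj(\mathcal{I}_C)$ and as the lifting needed to show $\mathcal{I}_F=\mathcal{I}_{FC}\cap\mathcal{L}_C$.

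The reverse direction has a second, sharper circularity: you pass from ``strongly compatible'' to ``orthogonal'' by citing Corollary \ref{orth equals strongly}, but the paper proves that direction of the corollary \emph{using} Theorem \ref{CPs induce model structures}, the very statement at issue. The fix is short and already implicit in your last paragraph: since $\mathcal{M}$ is strong, sectility and retractility are now hypotheses, so Propositions \ref{sectile fibs prop} and \ref{three conditions} legitimately pin down the acyclic fibrations as $\mathcal{I}_C$ and the acyclic cofibrations as $\mathcal{I}_F$; then the lifting axiom of the model structure (acyclic cofibrations lift against all fibrations, in particular against acyclic fibrations) is literally the orthogonality condition of Definition \ref{def of orth cond}, with no appeal to the corollary. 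With that repair, your mutual-inverse argument goes through and yields the bijection. One last point of bookkeeping: the theorem's statement pairs the acyclic fibrations with $F$, but Propositions \ref{three conditions} and \ref{sectile fibs prop} (and the construction itself) pair them with $C$; your write-up inherits this swap, so keep the pairing straight when carrying out the identification.
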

\begin{proof}
Apply Theorem \ref{prelim chk thm} to the idempotent monad $F$ to get a factorization system $(\mathcal{L}_F, \mathcal{R}_F)$ on $\mathcal{A}$. Apply Theorem \ref{prelim chk thm} once more (in its dual form), to the idempotent comonad $C$, to get a factorization system $(\mathcal{L}_C, \mathcal{R}_C)$ on $\mathcal{A}$. 

We define a model structure on $\mathcal{A}$ as follows:
\begin{itemize}
\item The acyclic cofibrations will be $\mathcal{L}_F$, i.e., $\mathcal{I}_F$.
\item The fibrations consequently must be $\mathcal{R}_F$, i.e., $\Inj(\mathcal{I}_F)$.
\item The acyclic fibrations will be $\mathcal{R}_C$, i.e., $\mathcal{I}_C$.
\item The cofibrations consequently must be $\mathcal{L}_C$, i.e., $\Proj(\mathcal{I}_C)$.
\item The weak equivalences consequently must be precisely those morphisms which factor as the composite of a morphism in $\mathcal{I}_F$ followed by a morphism in $\mathcal{I}_C$. We will write $W$ for this set of morphisms.
\end{itemize}
Since $\mathcal{I}_F$ and $\mathcal{I}_C$ are each decomposable, this is clearly a strong model structure if it is a model structure at all.


To verify that we have indeed defined a model structure on $\mathcal{A}$, we make the following sequence of observations:
\begin{enumerate}
\item The containment $\mathcal{L}_F\subseteq\mathcal{L}_C$ is equivalent to the orthogonality condition (Definition \ref{def of orth cond}) on the compatible pair $(F,C)$. Hence we have $\mathcal{L}_F\subseteq\mathcal{L}_C$, i.e., every acyclic cofibration is indeed a cofibration. This implies that $\mathcal{R}_C\subseteq\mathcal{R}_F$, i.e., every acyclic fibration is a fibration as well.
\item  We claim that $W = \mathcal{I}_{FC}$, i.e., the weak equivalences are precisely the morphisms inverted by the composite functor $FC$. The inclusion $W\subseteq \mathcal{I}_{FC}$ is easy to see. For the reverse containment: given a morphism $X \rightarrow Y$ in $\mathcal{I}_{FC}$, factor that morphism into a map $X\rightarrow X^{\prime}$ in $\mathcal{L}_F$ followed by a map $X^{\prime}\rightarrow Y$ in $\mathcal{R}_F$. 
Since $X\rightarrow Y$ is inverted by $CF$, and since $\mathcal{I}_{CF}$ is decomposable, the map $X^{\prime} \rightarrow Y$ is also inverted by $CF$. Hence $CX^{\prime} \rightarrow CY$ is in $\mathcal{L}_F$, hence a lift exists in the square
\[\xymatrix{
 CX^{\prime} \ar[d]_{\mathcal{L}_F} \ar[r] & X^{\prime} \ar[d]^{\mathcal{R}_F} \\
 CY \ar[r]\ar@{-->}[ur] & Y. }\]
Since $\mathcal{R}_C = \mathcal{I}_C$ is decomposable, and since $CY\rightarrow Y$ is in $\mathcal{R}_C$ and factors as $CY \rightarrow X^{\prime}\rightarrow Y$, the map $X^{\prime}\rightarrow Y$ is in $\mathcal{R}_C$ as well. 

Hence we have factored $X \rightarrow Y$ into a map $X\rightarrow X^{\prime}$ in $\mathcal{L}_F$ followed by a map $X^{\prime}\rightarrow Y$ in $\mathcal{R}_C$, i.e., $\mathcal{I}_{FC} \subseteq W$, as desired.

Since $W = \mathcal{I}_{FC}$ is decomposable, the weak equivalences satisfy two-out-of-three.
\item We must check that the acyclic cofibrations are precisely the weak equivalences which are cofibrations, i.e., $\mathcal{L}_F = \mathcal{I}_{FC}\cap \mathcal{L}_C$. 

Consider any map $X \xrightarrow{f}Y \in \mathcal{L}_C \, \cap \, \mathcal{W}$. We can factor $f$ as $X \xrightarrow{g} Z \xrightarrow{h} Y$, where $g \in \mathcal{L}_F$ and $h \in \mathcal{R}_C$. 
By the usual retract argument \cite[Lemma 1.1.9]{MR1650134}, $h$ is an isomorphism, so $f \in \mathcal{L}_F$, as desired.

Conversely, if $X \xrightarrow{f}Y \in \mathcal{L}_F$, then $Ff$ is an isomorphism, hence $CFf$ is an isomorphism, and hence $f$ is a weak equivalence. To verify that $f$ is also in $\mathcal{L}_C$, we must check that, for every map $A\rightarrow B$ in $\mathcal{R}_C$ (i.e., every map $A\rightarrow B$ inverted by $C$), the lifting problem
\begin{center}
\begin{tikzcd}
X \arrow[r] \arrow[d] & A \arrow[d] \\
Y \arrow[r]\arrow[ur,dashed]           & B                             
\end{tikzcd}
\end{center}
has a solution. This solution is provided precisely by the assumption that $(F,C)$ is an orthogonal pair.
\item The equality $\mathcal{R}_F = \mathcal{I}_{FC}\cap \mathcal{R}_C$ is proven by an argument dual to the argument just given.
\end{enumerate}

Hence, we indeed have a strong model structure associated to each orthogonal pair. The reverse direction is easier: given a strong model structure on $\mathcal{A}$, write $F$ for the fibrant replacement monad and $C$ for the cofibrant replacement comonad of the model structure. It is an easy exercise, using the assumption of strongness of the model structure, to verify that the set of acyclic cofibrations must be precisely $\mathcal{I}_F$, and the set of acyclic fibrations must be precisely $\mathcal{C}$. Hence every lifting problem of the form \eqref{orthogonal lifting problem} has a solution, i.e., $(F,C)$ is an orthogonal pair.
\end{proof}

\begin{corollary}\label{orth equals strongly}
    A compatible pair on a finitely bicomplete preorder is orthogonal if and only if it is strongly compatible.
\end{corollary}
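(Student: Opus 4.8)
The plan is to treat both \emph{orthogonal} and \emph{strongly compatible} as extra conditions imposed on a pair $(F,C)$ that is already known to be compatible, and to prove the two implications separately. I expect the forward implication (orthogonal $\Rightarrow$ strongly compatible) to be essentially immediate, while the reverse implication (strongly compatible $\Rightarrow$ orthogonal) will require the factorization system furnished by Proposition \ref{prelim chk thm}.

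First I would dispose of the direction orthogonal $\Rightarrow$ strongly compatible. Suppose $(F,C)$ is orthogonal and that $f\colon X\to Y$ is inverted by both $F$ and $C$, so that $f\in\mathcal{I}_F$ and $f\in\mathcal{I}_C$. I would apply the orthogonality condition of Definition \ref{def of orth cond} to the commutative square whose two vertical maps are both $f$ and whose two horizontal maps are identities; here the left vertical lies in $\mathcal{I}_F$ and the right vertical lies in $\mathcal{I}_C$, so a lift $Y\to X$ exists. In the preorder this lift says $Y\le X$, and since $f$ already encodes $X\le Y$ we conclude $X\cong Y$, i.e.\ $f$ is an isomorphism. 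That is exactly the defining property of a strongly compatible pair.

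For the reverse implication I would first recall, as in observation (1) of the proof of Theorem \ref{CPs induce model structures}, that orthogonality of $(F,C)$ is equivalent to the containment $\mathcal{I}_F\subseteq\Proj(\mathcal{I}_C)$. Applying the dual of Proposition \ref{prelim chk thm} to the idempotent comonad $C$ yields a factorization system $(\mathcal{L}_C,\mathcal{R}_C)$ with $\mathcal{R}_C=\mathcal{I}_C$ and $\mathcal{L}_C=\Proj(\mathcal{I}_C)$. Given any $f\in\mathcal{I}_F$, I would factor it as $X\xrightarrow{\ell}Z\xrightarrow{r}Y$ with $\ell\in\mathcal{L}_C$ and $r\in\mathcal{R}_C=\mathcal{I}_C$. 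The key point, available precisely because $\mathcal{A}$ is a preorder, is that $\mathcal{I}_F$ is decomposable; hence from $f\in\mathcal{I}_F$ we deduce $r\in\mathcal{I}_F$. Then $r$ lies in $\mathcal{I}_F\cap\mathcal{I}_C$, so strong compatibility forces $r$ to be an isomorphism, whence $f\cong\ell\in\mathcal{L}_C=\Proj(\mathcal{I}_C)$. This establishes $\mathcal{I}_F\subseteq\Proj(\mathcal{I}_C)$, i.e.\ orthogonality.

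The main obstacle is the reverse direction. The temptation is to try to produce the required lift directly from the chain of inequalities encoded in a lifting square together with the compatible-pair axioms, but that route is awkward and not obviously successful. The clean argument instead routes through the factorization system associated to $C$ and leans on the fact that $\mathcal{I}_F$ is decomposable in a preorder, turning a potential lift into the isomorphy of the right factor $r$. By contrast, the forward direction is a single application of the identity-square lifting problem and presents no difficulty.
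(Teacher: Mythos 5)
Your proof is correct, and the forward direction (orthogonal $\Rightarrow$ strongly compatible) via the identity square is exactly the paper's argument. For the converse, though, you take a genuinely different route. The paper handles it in one line by appealing to Theorem \ref{CPs induce model structures}: the strongly compatible pair induces a strong model structure whose acyclic cofibrations are $\mathcal{I}_F$ and whose acyclic fibrations are $\mathcal{I}_C$, so the lifting problems of Definition \ref{def of orth cond} are solved by the lifting axiom of that model structure. You instead argue directly: the dual of Proposition \ref{prelim chk thm} supplies the factorization system $\left(\Proj(\mathcal{I}_C),\,\mathcal{I}_C\right)$; any $f\in\mathcal{I}_F$ factors as $\ell$ followed by $r$ with $\ell\in\Proj(\mathcal{I}_C)$ and $r\in\mathcal{I}_C$; decomposability of $\mathcal{I}_F$ --- a preorder-specific fact --- places $r$ in $\mathcal{I}_F\cap\mathcal{I}_C$; strong compatibility then forces $r$ to be an isomorphism, so $f\in\Proj(\mathcal{I}_C)$, which is precisely orthogonality. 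What each approach buys: the paper's citation is shorter but delicate, since Theorem \ref{CPs induce model structures} takes an \emph{orthogonal} pair as hypothesis, so invoking it for a merely strongly compatible pair implicitly requires knowing that the model structure induced by such a pair is strong; your argument sidesteps the model-structure machinery entirely and has no such dependence, at the cost of redoing a factor-and-decompose step of the kind that already appears inside the proof of Theorem \ref{CPs induce model structures}. A side benefit of your route is that it makes visible that the equivalence uses none of the compatibility axioms themselves --- only finite bicompleteness, idempotency of the (co)monads, and the two conditions being compared.
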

\begin{proof}
Consider an orthogonal pair $(F,C)$. Then if $X \xrightarrow{f}Y$ is a map inverted by both $F$ and $C$, the following lifting diagram has a solution:
    \begin{center}
        \begin{tikzcd}
X \arrow[r, "id_X"] \arrow[d, "f"'] & X \arrow[d, "f"] \\
Y \arrow[r, "id_Y"']                & Y               
\end{tikzcd}.
    \end{center}
    Consequently $f$ is an isomorphism. Hence, every orthogonal pairs is strongly compatible.

    The converse is also true since by Theorem \ref{CPs induce model structures}, the existence of a model structure induced by the strongly compatible pair $(F,C)$ guarantees that all the lifting problems of the form of \eqref{orthogonal lifting problem} have a solution, i.e., the pair $(F,C)$ is orthogonal.
\end{proof}

\section{Strong model structures on finite Boolean algebras} 
\label{Strong model structures on finite Boolean algebras}

\subsection{All strong model structures arise from pairs of Moore collections}
\label{All strong model structures...}

In this subsection we review old and well-known connections between closure algebras, Moore collections, idempotent monads, and (somewhat less well-known, but nevertheless derivable from \cite{MR779198}) factorization systems on power sets. We learned these ideas from chapter 10 of the remarkable book \cite{MR1858927}. A more modern review, which also provides some generalizations, is also given in \cite{BCH+25}.

The Boolean algebras are a famous class of preorders: order-theoretically, the Boolean algebras are characterized by being precisely the complemented distributive lattices. The category of Boolean algebras is equivalent to the category of Boolean rings \cite{stone1935subsumption}, i.e., the commutative rings in which every element is idempotent. It is easy to use this algebraic characterization to show that every {\em finite} Boolean algebra is isomorphic, as a partially-ordered set, to the power set $\mathcal{P}(X)$ of some finite set $X$. 

Hence, by Theorem \ref{CPs induce model structures}, strong model structures on finite Boolean algebras are in bijection with orthogonal pairs on the power sets of a finite sets. In any orthogonal pair $(F,C)$ on the power set $\mathcal{P}(X)$, $F$ will be an idempotent monad on $\mathcal{P}(X)$, and $C$ will be an idempotent comonad on $\mathcal{P}(X)$. In order-theoretic terms, an idempotent monad on $\mathcal{P}(X)$ is an idempotent, increasing (i.e., $Y\subseteq Z$ implies $FY\subseteq FZ$), extensive (i.e., $Y\subseteq FY$) function from $\mathcal{P}(X)$ to $\mathcal{P}(X)$. In order theory, such a function is called a {\em closure operator}. 

It is an old and elementary observation\footnote{It seems to go back to E. H. Moore's 1910 book \cite{moore1910introduction}, although we do not know whether that is the origin of the idea. A more easily-obtained reference is chapter V of Birkhoff's influential book on lattice theory \cite{MR227053}.}
that one can specify a closure operator $F$ on $X$ by specifying which subsets $Y$ of $X$ are ``closed'' in the sense that $Y = FY$. The resulting collection of closed subsets of $X$ is not necessarily the collection of closed sets of a topology: it might not be true that the empty set is closed, and it might not be true that a union of finitely many closed sets is again closed. It is, however, at least true that the closed sets form a {\em Moore collection}:
    \begin{definition}
        A collection of subsets of a set $X$ is a \textit{Moore collection} if it is closed under arbitrary intersections. 

        Dually, a collection of subsets of $X$ is a {\em co-Moore collection} if it is closed under arbitrary unions.
    \end{definition}
From a Moore collection $\mathcal{U}\subseteq\mathcal{P}(X)$, one can recover the idempotent monad $F$ on $\mathcal{P}(X)$ by letting $FY$ be the intersection of all the closed sets (i.e., members of $\mathcal{U}$) containing $Y$. Hence, Moore collections in $X$ are in bijection with idempotent monads on $\mathcal{P}(X)$, consequently also in bijection with reflective subcategories of $\mathcal{P}(X)$, and consequently (by Theorem \ref{idem monads are bijective to retractile fact sys}) also in bijection with retractile factorization systems on $\mathcal{P}(X)$. Dually, co-Moore collections in $X$ are in bijection with idempotent comonads, coreflective subcategories, and sectile factorization systems on $\mathcal{P}(X)$.

\subsection{Model structures arising from topologies}
\label{Model structures arising from topologies}

Suppose that $\mathcal{S}$ is a reflective subcategory of $\mathcal{P}(X)$, with reflector functor $F$. Then $\mathcal{S}$ inherits a partial ordering from $\mathcal{P}(X)$, and in this partial ordering, $\mathcal{S}$ is a lattice. However, the join of two elements is $F(A\vee B)$, where $A\vee B$ is the join in $\mathcal{P}(X)$ (i.e., $A \vee B = A\cup B$). The point is that the join in $\mathcal{S}$ might not agree with the join in $\mathcal{P}(X)$. Hence, while $\mathcal{P}(X)$ is a lattice, and $\mathcal{S}$ is a lattice, and $\mathcal{S}$ is a sub-poset of $\mathcal{P}(X)$, it might not be true that $\mathcal{S}$ is a sub-lattice of $\mathcal{P}(X)$!

On the other hand, sometimes $\mathcal{S}$ {\em is} a sub-lattice of $\mathcal{P}(X)$. This happens if and only if the corresponding Moore collection is closed under finite unions. We need only add the condition that $\mathcal{S}$ contains the bottom element $\emptyset \in\mathcal{P}(X)$, and now the Moore collection in question is precisely a topology on $X$. This observation motivates the following definition:
\begin{definition}
Let $X$ be a set. A reflective subcategory $\mathcal{S}$ of $\mathcal{P}(X)$ will be called a {\em topological subcategory of $\mathcal{P}(X)$} if the Moore collection associated to $X$ is a topology on $X$. 
Equivalently:
\begin{itemize}
\item The reflective subcategory $\mathcal{S}$ is topological if and only if its reflector functor $F$ commutes with finite colimits.
\item The reflective subcategory $\mathcal{S}$ is topological if and only if $F(\emptyset) = \emptyset$, and $F(Y\cup Z) = (FY) \cup (FZ)$ for all $Y,Z\in\mathcal{P}(X)$.
\end{itemize}
\end{definition}

A set $X$ equipped with a closure operator on its power set $\mathcal{P}(X)$ is standardly called a {\em closure algebra}. The dual notion, a set equipped with an interior operator (i.e., an idempotent comonad) on its power set, is called an {\em interior algebra}. Evidently a model structure on the Boolean algebra $\mathcal{P}(X)$ equips $X$ with the structure of a closure algebra (via the fibrant replacement monad) and an interior algebra (via the cofibrant replacement comonad). In light of Theorem \ref{CPs induce model structures}, there is then a one-to-one correspondence between strong model structures on $\mathcal{P}(X)$ and pairs $(\Cl,\Int)$, where $\Cl$ is a closure operator on $X$, and $\Int$ is an interior algebra on $X$, satisying compatibility conditions equivalent to compatibility (Definition \ref{def of compatible pair}) and orthogonality (Definition \ref{def of orth cond}) of the monad-comonad pair $(\Cl,\Int)$.
It is an easy matter to write down those conditions:
\begin{prop}\label{closure-interior compatibility}
Let $X$ be a set. Strong model structures on $\mathcal{P}(X)$ are in bijection with ordered pairs $(\Cl,\Int)$, where $\Cl$ is a closure operator on $X$, and $\Int$ is an interior operator on $X$, satisfying the conditions:
\begin{itemize}
\item 
For each subset $U$ of $X$, we have $\Cl(\Int(U)) = \Int(\Cl(U))$.
\item 
If $U,V$ are subsets of $X$ such that $U\subseteq \Int(V)$ and $\Cl(U)\subseteq V$, then $\Cl(U)\subseteq \Int(V)$.
\item 
If $U,V$ are subsets of $X$ such that $U\subseteq V$ and $\Cl(U) = \Cl(V)$ and $\Int(U) = \Int(V)$, then $U=V$.
\end{itemize}
\end{prop}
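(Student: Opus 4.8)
The plan is to read this off from Theorem~\ref{CPs induce model structures} by translating everything into the language of closure and interior operators via the dictionary recalled in \cref{All strong model structures...}. Since $\mathcal{P}(X)$ is a complete lattice, hence finitely bicomplete, that theorem gives a bijection between strong model structures on $\mathcal{P}(X)$ and isomorphism classes of orthogonal pairs $(F,C)$ on $\mathcal{P}(X)$. Two simplifications specific to the poset $\mathcal{P}(X)$ then apply. First, every monad on a poset is automatically idempotent: monotonicity of $F$ together with the unit inequality $U\subseteq FU$ gives $FU\subseteq FFU$, while the multiplication gives $FFU\subseteq FU$, so $FFU=FU$; the dual argument handles comonads. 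Second, two (co)monads on a poset are isomorphic if and only if they are equal. Hence the monad $F$ is precisely a closure operator $\Cl$, the comonad $C$ is precisely an interior operator $\Int$, and the passage to isomorphism classes is vacuous. It therefore remains only to check that, under $F=\Cl$, $C=\Int$, and $\leq\,=\,\subseteq$, the conditions defining an orthogonal pair become exactly the three conditions in the statement.

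Next I would unwind the definitions clause by clause. By Corollary~\ref{orth equals strongly}, an orthogonal pair is the same as a strongly compatible pair, so I transcribe Definition~\ref{def of compatible pair}. The clause ``$F$ commutes with $C$ up to isomorphism'' says $FCU=CFU$ for all $U$, that is $\Cl(\Int(U))=\Int(\Cl(U))$, which is the first displayed condition. The clause ``$X\leq CY$ and $FX\leq Y$ imply $FX\leq CY$'' reads ``$U\subseteq\Int(V)$ and $\Cl(U)\subseteq V$ imply $\Cl(U)\subseteq\Int(V)$,'' which is the second condition. For the strong-compatibility clause, a morphism $U\to V$ exists exactly when $U\subseteq V$; it is inverted by $F$ iff $\Cl(U)=\Cl(V)$, inverted by $C$ iff $\Int(U)=\Int(V)$, and is an isomorphism iff $U=V$ (an arrow in a poset is invertible only when its source equals its target). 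So the strong-compatibility clause becomes exactly the third condition.

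Composing the bijection of Theorem~\ref{CPs induce model structures} with this dictionary then yields the claimed bijection between strong model structures on $\mathcal{P}(X)$ and pairs $(\Cl,\Int)$ satisfying the three listed conditions. There is essentially no analytic content; the steps requiring care are the bookkeeping points above --- that (co)monads on a poset are automatically idempotent and that isomorphism classes collapse --- together with the faithful transcription of each categorical clause into a statement about $\subseteq$, $\Cl$, and $\Int$. The one genuine pitfall, and thus the ``main obstacle'' in the only sense in which there is one, is that Definition~\ref{def of compatible pair} selects the first and third of the equivalent conditions of Proposition~\ref{compatibility conditions between comonad and monad}; one must be sure to transcribe \emph{those} two clauses (yielding the first two displayed conditions) rather than inadvertently substituting one of the other equivalent formulations, which would produce a superficially different but actually equivalent pair of hypotheses.
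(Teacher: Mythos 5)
Your proposal is correct and takes essentially the same route as the paper, which states the proposition as an immediate consequence of Theorem \ref{CPs induce model structures} together with the dictionary between idempotent (co)monads on $\mathcal{P}(X)$ and closure/interior operators, and omits the details ("it is an easy matter to write down those conditions"). Your write-up simply supplies the bookkeeping the paper leaves implicit --- automatic idempotency of (co)monads on a poset, collapse of isomorphism classes, and the clause-by-clause transcription of compatibility, the lifting condition, and strong compatibility into the three displayed conditions --- all of which is accurate.
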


In particular, some of the strong model structures on $\mathcal{P}(X)$ arise by means of a pair of topologies $\mathcal{T}_1,\mathcal{T}_2$ on $X$, such that, if we write $\Cl$ for the closure operator in the topology $\mathcal{T}_1$ (presented via its closed sets), and $\Int$ for the interior operator in the topology $\mathcal{T}_2$, then the conditions of Proposition \ref{closure-interior compatibility} are satisfied. We adopt a name for these model structures:
\begin{definition}\label{topological def}
A factorization system on $\mathcal{P}(X)$ will be called {\em topological} if it arises in this way from a topology on $X$.
Similarly, a strong model structure on $\mathcal{P}(X)$ will be called {\em topological} if it arises in this way from a pair of topologies on $X$.
\end{definition}
We think that context is likely to be sufficient to avoid any confusion between topological model structures on Boolean algebras, as in Definition \ref{topological def}, and topological model categories in the sense of a model category enriched in topological spaces and satisfying appropriate axioms as in \cite[Definition VII.4.2]{MR1417719}. 

Write $\Moore(X)$ for the set of Moore collections on a set $X$, and write $\Top(X)$ for the set of topologies on $X$. The set of strong model structures on $\mathcal{P}(X)$ is a subset of $\Moore(X)\times \Moore(X)$, by Proposition \ref{closure-interior compatibility}. It is natural to ask how much of $\Moore(X)\times \Moore(X)$ is comprised by this subset; roughly, how likely is it that a randomly-chosen pair of Moore collections in $X$ defines a model structure on $\mathcal{P}(X)$? Similarly, the set of topological model structures on $\mathcal{P}(X)$ is a subset of $\Top(X)\times\Top(X)$, and one naturally asks how large this subset is.

Here we attempt to answer these questions only for very small sets $X$, and only by a na\"{i}ve approach, via direct calculation. One can use a computer to do the following:
\begin{enumerate}
\item Generate all Moore families on $X$ using the depth-first algorithm as described in \cite{MR1489091}.
\item For each Moore family $\mathcal{U}$ on $X$, check whether $\mathcal{U}$ is a topology.
\item For each ordered pair $(\mathcal{U},\mathcal{V})$ of Moore families on $X$, consider the ordered pair $(\mathcal{U},\mathcal{V}^{\op})$, where $\mathcal{V}^{\op}$ is the co-Moore family in $X$ dual to $\mathcal{V}$. Check whether $(\mathcal{U},\mathcal{V}^{\op})$ satisfies the conditions of Proposition \ref{closure-interior compatibility}, so that it defines a strong model structure on $\mathcal{P}(X)$.
\item Same as the previous step, but with $\mathcal{U},\mathcal{V}$ each taken from the list of topologies on $X$.
\end{enumerate}
Our implementation of the above procedure in Sage \cite{sagemath95}, available at \url{https://github.com/Gunjeet-Singh/Generating-compatible-monad-comonad-pairs}, yielded the following counts:
\begin{equation}\label{tab:strong_model_structures}
\begin{tabular}{|c|c|c|c|c|}
\hline
\text{Cardinality} & \text{Number of} & \text{Number of} & \text{Number of} & \text{Number of} \\
\text{of $X$} & \text{Moore} & \text{topologies} & \text{strong model} & \text{topological} \\
 & \text{families in $X$} & \text{on $X$} & \text{structures} & \text{model structures} \\
 & & &  \text{on $\mathcal{P}(X)$} & \text{on $\mathcal{P}(X)$}\\
\hline
1 & 2     & 1     & 3      & 1    \\
2 & 7     & 4     & 17     & 9   \\
3 & 61    & 29    & 377    & 84  \\
4 & 2480  & 355   & 127866 & 1295 \\
\hline
\end{tabular}
\end{equation}
The second and third columns, counting Moore collections and topologies on a finite set, are well-studied classically; e.g. \cite[A102896,A000798]{oeis2025}. The other columns seem to not have been calculated before, and there is no obvious description of their values in terms of familiar sequences from combinatorics.
     
The ratios of the number of strong model structures to the size of $\Moore(X)\times \Moore(X)$, and of the number of topological model structures to the size of $\Top(X)\times\Top(X)$, are as follows:
\begin{equation}
\centering
\begin{tabular}{|c|c|c|}
\hline
\text{Cardinality of $X$} & 
\text{\shortstack{Strong model\\ structures as \% \\ of $(\text{Moore})^2$}} & 
\text{\shortstack{Topological model\\ structures as \\ \% of strong\\ model structures}} \\
\hline
1 & 33.33\% & 33.33\% \\
2 & 34.69\% & 52.94\% \\
3 & 10.14\% & 22.29\% \\
4 & 2.08\%  & 1.01\% \\
\hline
\end{tabular}
\label{tab:strong_model_structure_percentages}
\end{equation}
It is natural to guess that both ratios go to zero as the size of the finite set $X$ goes to $\infty$, but we have not attempted to find a proof.

\subsection{Model structures arising from geometries and matroids}

Moore collections in $X$ are in bijection with retractile factorization systems in $\mathcal{P}(X)$, as explained in \cref{All strong model structures...}. In mathematics, the most commonly-considered kind of Moore collection is a topology. However, topologies are not the only kind of Moore collection which has attracted interest and study: in particular, of the several equivalent definitions of a matroid, one definition of a matroid is as a particular kind of Moore collection. The matroids which are ``simple,'' in a sense to be defined below (Definition \ref{def of matroid}), are called ``geometries,'' and are studied in combinatorial approaches to projective geometry. 

In this subsection, we recall the definitions of matroids and geometries in terms of Moore collections, and we explain how appropriate pairs of matroids and geometries give rise to model structures on finite Boolean algebras.

\begin{definition}\label{def of matroid}
A {\em matroid} is a Moore collection on a set $X$ satisfying the following two conditions. In these conditions, we write $\Cl(A)$ for the closure of a subset $A$ of $X$ in the given Moore collection.
\begin{description}
\item[Finitary property] If $x\in \Cl(A)$ for some element $x\in X$ and some subset $A\subseteq X$, then $x\in \Cl(B)$ for some finite subset $B$ of $X$. 
\item[Exchange property] If $x,y\in X$ and $A\subseteq X$ and $x\notin \Cl(A)$ and $x\in \Cl(A\cup \{ y\})$, then $y\in \Cl(A\cup \{x\})$.
\end{description}
The closed sets in a matroid are sometimes called {\em subspaces} or {\em flats.}

A matroid is called a {\em simple matroid} or a {\em geometry} if it also satisfies the simpleness condition:
\begin{description}
\item[Simpleness property] $\Cl(\emptyset) = \emptyset$, and for each $x\in X$, we have $\Cl(\{x\}) = \{x\}$.
\end{description}
\end{definition} 
If the set $X$ has at least two elements, then the condition $\Cl(\emptyset) = \emptyset$ is implied by the rest, and can be omitted. If $X$ is also finite, then of course the finitary property is automatically satisfied, and can be omitted.

The book \cite{MR1783451} of Faure and Fr\"{o}licher is a good textbook reference for these ideas. A treatment of matroids in terms of Moore collections, in particular, is given in chapter 3 of \cite{MR1783451}. For a shorter introduction to the significance and applications of matroids, one can consult E. Katz's paper \cite{MR3702317} for connections to algebraic geometry, and the material in the introduction preceding the statements of any theorems in Biss's paper \cite{MR2031856} for connections to algebraic topology. (There are well-known problems with the new results in \cite{MR2031856}, as indicated by \cite{mnev2007dkbisspapersthe} and \cite{MR2521124}. Nevertheless, the expository material at the start of \cite{MR2031856} is informative, compelling, and quite well-written.)

\begin{prop}\label{matroidal fact systems}
Let $X$ be a set. Then the collection of matroids with underlying set $X$ embeds into the collection of retractile factorization systems on the power set $\mathcal{P}(X)$. The factorization system $(\mathcal{L},\mathcal{R})$ associated to a matroid $\mathcal{M}$ is given by letting $\mathcal{L}$ be the set of all maps $U\subseteq V$ in $\mathcal{P}(V)$ such that $\Cl(U) = \Cl(V)$. 
\end{prop}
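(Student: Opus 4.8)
The plan is to observe that this proposition is essentially an immediate specialization of the correspondence between Moore collections and retractile factorization systems already set up in \cref{All strong model structures...}, restricted to those Moore collections which happen to be matroids. First I would recall that, by Definition \ref{def of matroid}, every matroid with underlying set $X$ is in particular a Moore collection on $X$. The discussion in \cref{All strong model structures...} associates to a Moore collection $\mathcal{M}$ on $X$ the idempotent monad $F = \Cl$ on $\mathcal{P}(X)$, where $\Cl(Y)$ is the intersection of all members of $\mathcal{M}$ containing $Y$; and Theorem \ref{idem monads are bijective to retractile fact sys} then associates to this monad a retractile factorization system $(\mathcal{L},\mathcal{R})$ with $\mathcal{L} = \mathcal{I}_F$. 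Composing these two bijections yields a bijection between Moore collections on $X$ and retractile factorization systems on $\mathcal{P}(X)$. Since matroids form a subclass of Moore collections, restricting this bijection gives an injection---hence an embedding---of matroids into retractile factorization systems.

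The only remaining task is to verify that, under this correspondence, the left class $\mathcal{L}$ is exactly the set described in the statement. By Definition \ref{def of I}, the class $\mathcal{I}_F$ consists of those morphisms inverted by $F$. A morphism in $\mathcal{P}(X)$ is an inclusion $U \subseteq V$, and it is inverted by $F = \Cl$ precisely when $\Cl(U) \to \Cl(V)$ is an isomorphism in $\mathcal{P}(X)$. Since $\mathcal{P}(X)$ is a poset and $\Cl$ is monotone, the inclusion $\Cl(U) \subseteq \Cl(V)$ always holds, so the isomorphism condition reduces to the reverse inclusion, i.e.\ to the equality $\Cl(U) = \Cl(V)$. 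Hence $\mathcal{L} = \{\, U \subseteq V : \Cl(U) = \Cl(V)\,\}$, exactly as claimed. (I would also flag what appears to be a typo in the statement: the maps $U \subseteq V$ live in $\mathcal{P}(X)$, not in $\mathcal{P}(V)$.)

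I do not expect any genuine obstacle here. The matroid axioms (the finitary, exchange, and simpleness conditions of Definition \ref{def of matroid}) play no role in the argument: they merely single out which Moore collections are matroids, whereas both the embedding and the explicit description of $\mathcal{L}$ hold for every Moore collection. All the substance has already been carried out in Theorem \ref{idem monads are bijective to retractile fact sys} and in the Moore-collection--monad dictionary of \cref{All strong model structures...}; the present proposition is a bookkeeping specialization of those results, recorded in order to introduce matroidal factorization systems in parallel with the topological ones of \cref{Model structures arising from topologies}.
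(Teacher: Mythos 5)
Your proof is correct and follows exactly the route the paper intends: the paper gives no explicit proof of this proposition, treating it as an immediate specialization of the Moore-collection/idempotent-monad/retractile-factorization-system dictionary of \cref{All strong model structures...} and Theorem \ref{idem monads are bijective to retractile fact sys}, which is precisely your argument (including the observation that the matroid axioms play no role and that $\mathcal{L}=\mathcal{I}_{\Cl}$ unwinds to the condition $\Cl(U)=\Cl(V)$). Your note that $\mathcal{P}(V)$ should read $\mathcal{P}(X)$ is also right.
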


As a corollary of Proposition \ref{closure-interior compatibility}, we have:
\begin{prop}\label{model structs from matroids}
Let $X$ be a set. Suppose $\mathcal{M}_1,\mathcal{M}_2$ are matroids with underlying set $X$. Write $\Cl$ for the closure operator in $\mathcal{M}_1$, and write $\Int$ for the interior operator in $\mathcal{M}_2$, i.e., $\Int(U) = c(\Cl_2(cU))$, where $c$ is complementation, and $\Cl_2$ is the closure operator of $\mathcal{M}_2$. Then the following conditions are equivalent:
\begin{itemize}
\item There exists a strong model structure on the power set $\mathcal{P}(X)$ whose fibrant objects are precisely the closed sets in the matroid $\mathcal{M}_1$, and whose cofibrant objects are precisely the complements of closed sets in the matroid $\mathcal{M}_2$.
\item There exists a {\em unique} strong model structure on the power set $\mathcal{P}(X)$ whose fibrant objects are precisely the closed sets in the matroid $\mathcal{M}_1$, and whose cofibrant objects are precisely the complements of closed sets in the matroid $\mathcal{M}_2$.
\item The following conditions each hold:
\begin{align*}
 \Cl( \Int(U)) &= \Int(\Cl(U))\mbox{\ for\ all\ }U\subseteq X,\\
\mbox{if\ } U\subseteq \Int(V)\mbox{\ and\ }& \Cl(U) \subseteq V,\mbox{then\ } \Cl(U)\subseteq \Int(V),\mbox{\ and} \\
\mbox{if\ } U\subseteq V\mbox{\ and\ } \Cl(U)=\Cl(V)\mbox{\ and\ }& \Int(U) = \Int(V),\mbox{then\ } U=V.
\end{align*}
\end{itemize}
\end{prop}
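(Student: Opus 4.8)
The plan is to treat this as a straightforward translation of Proposition \ref{closure-interior compatibility} into the language of matroids; no genuinely new work is needed beyond checking that a pair of matroids produces exactly the closure/interior data to which that proposition applies.

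First I would record the dictionary. By Definition \ref{def of matroid} a matroid is a special Moore collection, so by \cref{All strong model structures...} the flats of $\mathcal{M}_1$ are precisely the fixed sets of its closure operator $\Cl$, and $\Cl$ is a genuine closure operator, i.e. an idempotent monad on $\mathcal{P}(X)$. Likewise $\mathcal{M}_2$ yields a closure operator $\Cl_2$; since the complements of a Moore collection form a co-Moore collection, the formula $\Int(U) = c(\Cl_2(cU))$ defines a genuine interior operator, i.e. an idempotent comonad. A one-line computation shows $\Int(U)=U$ iff $cU$ is $\Cl_2$-closed, so the $\Int$-open sets are exactly the complements of flats of $\mathcal{M}_2$. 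Thus the prescribed fibrant objects in the statement are the $\Cl$-closed sets, and the prescribed cofibrant objects are the $\Int$-open sets.

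Next I would invoke Proposition \ref{closure-interior compatibility} together with the discussion preceding it: strong model structures on $\mathcal{P}(X)$ are in bijection with pairs (closure operator, interior operator) satisfying the three listed compatibility conditions, the bijection sending a model structure to its fibrant replacement monad and cofibrant replacement comonad, and sending such a pair to the model structure whose fibrant (resp. cofibrant) objects are the closed (resp. open) sets of the closure (resp. interior) operator. The three conditions in the present statement are literally the conditions of Proposition \ref{closure-interior compatibility} applied to $(\Cl,\Int)$. With this in hand the three-way equivalence is immediate. For (3) $\Rightarrow$ (2), the conditions say $(\Cl,\Int)$ lies in the domain of the bijection, producing a strong model structure whose fibrant and cofibrant objects are as required, and it is unique because any model structure meeting the requirements maps to $(\Cl,\Int)$ under the bijection. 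The implication (2) $\Rightarrow$ (1) is trivial. For (1) $\Rightarrow$ (3), a model structure with the prescribed fibrant objects has a fibrant replacement monad whose closed sets are the flats of $\mathcal{M}_1$; since an idempotent monad on $\mathcal{P}(X)$ is determined by its Moore collection of closed sets (\cref{All strong model structures...}), that monad is $\Cl$, and dually the cofibrant replacement comonad is $\Int$. Hence $(\Cl,\Int)$ arises from a strong model structure, so by the bijection it satisfies the three conditions.

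The only points requiring genuine care—and the closest thing to an obstacle—are the verifications hidden in the dictionary: that $\Int$ really is an interior operator, and that the fibrant/cofibrant objects of a model structure pin down $\Cl$ and $\Int$ uniquely, so that the passage from matroid data to model structures is well-defined and the pair is reconstructed correctly from (1). Both follow from the standard fact, recalled in \cref{All strong model structures...}, that idempotent monads and comonads on a power set correspond bijectively to Moore and co-Moore collections, i.e. are determined by their closed and open sets. Once that is granted, the proposition is a formal consequence of Proposition \ref{closure-interior compatibility}.
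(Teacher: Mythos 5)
Your proposal is correct and follows exactly the route the paper intends: the paper states this proposition as an immediate corollary of Proposition \ref{closure-interior compatibility} with no written proof, and your argument is precisely the translation (matroids as Moore collections, $\Int$ as the complementation-conjugate of $\Cl_2$, and the bijection of Proposition \ref{closure-interior compatibility} determining the model structure uniquely from its fibrant and cofibrant objects) that this corollary relies on.
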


\begin{definition}\label{def of matroidal and geometric}
A factorization system on a Boolean algebra will be called {\em matroidal} (respectively, {\em geometric}) if it can be constructed from a matroid (respectively, geometry) $\mathcal{M}$ by the construction in Proposition \ref{matroidal fact systems}. Similarly, a model structure on a Boolean algebra will be called {\em matroidal} (respectively, {\em geometric}) if it can be constructed from an ordered pair of matroids (respectively, geometries) $(\mathcal{M}_1,\mathcal{M}_2)$ by the construction in Proposition \ref{model structs from matroids}.
\end{definition}

\section{Model structures on preorders, in general.}
\label{factorization bracket pairs section}

By Theorem \ref{CPs induce model structures}, the data of a strong model structure on a finitely bicomplete preorder $\mathcal{A}$ is entirely encoded by a choice of an idempotent monad and an idempotent comonad on $\mathcal{A}$ satisfying the compatibility conditions (Definition \ref{def of compatible pair}) and the orthogonality condition (Definition \ref{def of orth cond}). It is natural to ask whether some similarly ``algebraic'' structure on $\mathcal{A}$ might describe {\em all} model structures on $\mathcal{A}$ in general, not only the strong model structures. Indeed, such an algebraic description is possible: we call it a {\em factorization bracket pair}. It is not very deep at all---in fact, this algebraic structure is quite transparently equivalent to the data of a model structure---but nevertheless specifying model structures via this algebraic structure is useful for the computer calculations we describe in \cref{Explicit examples}.
\begin{definition}\label{def of fact bracket}
 Let $\mathcal{A}$ be a preorder, and let $\Rels(\mathcal{A})$ denote the set of ordered pairs $(X,Y)$ of objects of $\mathcal{A}$ such that $X\leq Y$.
\begin{itemize}
 \item By a {\em factorization bracket on $\mathcal{A}$} we mean a function $\Phi:\Rels(\mathcal{A})\rightarrow \mathcal{A}$ satisfying the four axioms:
\begin{align}
\label{f binary axiom 1}  X\leq & \Phi(X,Y) \leq Y, \\
\label{f trinary axiom 1}  \mbox{if\ }X\leq Y \mbox{\ and\ } \Phi(X,Y)\leq Z, &
   \mbox{\ then\ } \Phi(\Phi(X,Y),Z) \cong \Phi(X,Z),\\
\label{f trinary axiom 2}  \mbox{if\ }Y\leq Z \mbox{\ and\ } X\leq \Phi(Y,Z), &
   \mbox{\ then\ } \Phi(X,\Phi(Y,Z)) \cong \Phi(X,Z), \\
\label{f quaternary axiom 1}  \mbox{if\ }W\leq \Phi(Y,Z)\mbox{\ and\ } \Phi(W,X)\leq Z,&
   \mbox{\ then\ } \Phi(W,X)\leq \Phi(Y,Z).
 \end{align}
 \item By a {\em factorization bracket pair on $\mathcal{A}$} we mean an ordered pair $\Phi,\Psi$ of factorization brackets on $\mathcal{A}$ satisfying the five axioms
 \begin{align}
\label{fbpair binary axiom 1}   \mbox{if\ } X\leq Y,\mbox{\ then\ } \Psi\left(X,\Phi(X,Y)\right)&\cong \Phi(X,Y),\\
\label{fbpair binary axiom 2}   \mbox{if\ } X\leq Y,\mbox{\ then\ } \Phi\left(\Psi(X,Y),Y\right) &\cong \Psi(X,Y),\\ 
\label{fbpair trinary axiom 4}   \mbox{if\ } X\leq Y\leq Z, \mbox{\ then\ } \Phi\left(\Psi(X,Y),\Phi(Y,Z)\right) &\cong \Psi\left(\Psi(X,Y),\Phi(Y,Z)\right),\\
\label{fbpair trinary axiom 5}  \mbox{if\ } X\leq Y\leq Z\mbox{\ and\ }\hspace{80pt} &\\ \nonumber \hspace{20pt} \Phi\left( \Psi(X,Y),\Psi(Y,Z)\right) \cong \Psi\left( \Psi(X,Y),\Psi(Y,Z)\right), &\mbox{\ then\ } \Phi\left(Y,\Psi(Y,Z)\right)\cong \Psi(Y,Z),\\
\label{fbpair trinary axiom 6}   \mbox{if\ } X\leq Y\leq Z\mbox{\ and\ }\hspace{80pt} &\\ \nonumber \hspace{20pt}  \Phi\left( \Phi(X,Y),\Phi(Y,Z)\right) \cong \Psi\left( \Phi(X,Y),\Phi(Y,Z)\right), &\mbox{\ then\ } \Psi\left(\Phi(X,Y),Y\right)\cong \Phi(X,Y).
 \end{align}
 \end{itemize}
 \end{definition}

 \begin{definition-proposition}
 Given a factorization bracket pair $\Phi,\Psi$ on a preorder $\mathcal{A}$ with initial object $0$ and terminal object $1$, its {\em associated compatible pair} consists of the monad $F$ on $\mathcal{A}$ given by $F(X) = \Phi(X,1)$ and the comonad $C$ on $\mathcal{A}$ given by $C(X) = \Psi(0,X)$. 
 \end{definition-proposition}
 \begin{proof}
 We should check that $F,C$ is indeed a compatible pair. We check the axioms from Definition \ref{def of compatible pair} in order:
 \begin{itemize}
 \item $FCX = \Phi(\Psi(0,X),1) \cong \Psi(0,\Phi(X,1)) = CFX$, so $F$ commutes with $C$ up to isomorphism.
 \item If $X,Y$ are objects of $\mathcal{A}$ such that $X\leq CY$ and $FX\leq Y$, then $X\leq CY = \Psi(0,Y)$ and $\Phi(X,1) = FX\leq Y$, so $FX = \Phi(X,1)\leq \Psi(0,Y) = CY$, as desired.
 \end{itemize}
 \end{proof}

\begin{prop}\label{fact brackets and fact systems and model structs}
Isomorphism classes of factorization brackets on a preorder $\mathcal{A}$ are in bijection with factorization systems on $\mathcal{A}$.
Similarly, isomorphism classes of factorization bracket pairs on a preorder $\mathcal{A}$ are in bijection with model structures on $\mathcal{A}$.
\end{prop}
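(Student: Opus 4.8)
The plan is to prove the two statements in order, with the second deduced from the first. For the first statement I would exhibit explicit mutually inverse assignments between factorization brackets and factorization systems on $\mathcal{A}$; for the second, I would transport a factorization bracket pair through the first correspondence to a pair of factorization systems and show that the bracket-pair axioms are exactly what is needed for these two factorization systems to assemble into a model structure in the sense of the paper's Conventions (two weak factorization systems together with a two-out-of-three class of weak equivalences).

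For the first correspondence, send a factorization system $(\mathcal{L},\mathcal{R})$ to the bracket $\Phi$ with $\Phi(X,Y)$ the middle object of the $(\mathcal{L},\mathcal{R})$-factorization of $X\to Y$, and send a bracket $\Phi$ to the pair $\mathcal{L}_\Phi=\{(X,Y):\Phi(X,Y)\cong Y\}$, $\mathcal{R}_\Phi=\{(X,Y):\Phi(X,Y)\cong X\}$. That a factorization system yields a bracket satisfying \eqref{f binary axiom 1}--\eqref{f quaternary axiom 1} is immediate: \eqref{f binary axiom 1} is the definition of a factorization; \eqref{f trinary axiom 1} and \eqref{f trinary axiom 2} follow from uniqueness of factorizations (Proposition \ref{factorization systems are functorial}) together with closure of $\mathcal{L}$, $\mathcal{R}$ under composition; and \eqref{f quaternary axiom 1} is a single instance of the lifting property. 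Conversely, \eqref{f binary axiom 1} supplies the factorization $X\to\Phi(X,Y)\to Y$, the specializations $Z=\Phi(X,Y)$ and $Z=Y$ of \eqref{f trinary axiom 1} give $\Phi(X,\Phi(X,Y))\cong\Phi(X,Y)\cong\Phi(\Phi(X,Y),Y)$ and hence place the two legs in $\mathcal{L}_\Phi$ and $\mathcal{R}_\Phi$, and \eqref{f quaternary axiom 1} is precisely $\mathcal{L}_\Phi\perp\mathcal{R}_\Phi$; the retract argument in a preorder then promotes this to $\mathcal{L}_\Phi=\Proj(\mathcal{R}_\Phi)$ and $\mathcal{R}_\Phi=\Inj(\mathcal{L}_\Phi)$. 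Mutual inverseness follows from Corollary \ref{retractile and sectile cor} (a map lies in $\mathcal{L}$ iff the $\mathcal{R}$-leg of its factorization is invertible) and again from uniqueness of factorizations.

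For the second correspondence, a bracket pair $(\Phi,\Psi)$ gives factorization systems $(\mathcal{L}_\Phi,\mathcal{R}_\Phi)$ and $(\mathcal{L}_\Psi,\mathcal{R}_\Psi)$, which I would identify with $(\text{acyclic cofibrations},\text{fibrations})$ and $(\text{cofibrations},\text{acyclic fibrations})$ respectively --- matching the notation $\Phi,\Psi$ already fixed in the paper. Under the resulting dictionary ($X\to Y$ is an acyclic cofibration iff $\Phi(X,Y)\cong Y$, a cofibration iff $\Psi(X,Y)\cong Y$, a fibration iff $\Phi(X,Y)\cong X$, an acyclic fibration iff $\Psi(X,Y)\cong X$), the binary axioms \eqref{fbpair binary axiom 1} and \eqref{fbpair binary axiom 2} say exactly that every acyclic cofibration is a cofibration and every acyclic fibration is a fibration, i.e. $\mathcal{L}_\Phi\subseteq\mathcal{L}_\Psi$ and $\mathcal{R}_\Psi\subseteq\mathcal{R}_\Phi$. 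A short lifting argument using these containments yields $\Phi(X,Y)\le\Psi(X,Y)$ for all $X\le Y$, and I would take $\mathcal{W}:=\{(X,Y):\Phi(X,Y)\cong\Psi(X,Y)\}$ as the class of weak equivalences. The three trinary axioms \eqref{fbpair trinary axiom 4}, \eqref{fbpair trinary axiom 5}, \eqref{fbpair trinary axiom 6} are then to be matched with the three cases of two-out-of-three for a composable pair $X\le Y\le Z$. Granting the match, the two factorization systems and the class $\mathcal{W}$ satisfy the paper's definition of a model structure (one checks $cof\cap\mathcal{W}=\mathcal{L}_\Phi$ and $fib\cap\mathcal{W}=\mathcal{R}_\Psi$), and conversely every model structure produces a bracket pair via the brackets of its two factorization systems, the bracket-pair axioms being routine consequences of the model axioms.

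The hard part will be the decoding in the third paragraph: proving, in both directions, that \eqref{fbpair trinary axiom 4}--\eqref{fbpair trinary axiom 6} are equivalent to the full two-out-of-three property for $\mathcal{W}$. The delicate point is that the clean proof of the dictionary entry ``$\Phi\cong\Psi$ detects weak equivalences'' itself invokes two-out-of-three; so in the bracket-pair-to-model-structure direction I cannot assume it, and must instead verify two-out-of-three for $\mathcal{W}$ directly from the axioms, using the cancellation identities \eqref{f trinary axiom 1}--\eqref{f trinary axiom 2} for each of $\Phi$ and $\Psi$ to rewrite the nested expressions $\Phi(\Psi(X,Y),\Phi(Y,Z))$, $\Phi(\Psi(X,Y),\Psi(Y,Z))$ and $\Phi(\Phi(X,Y),\Phi(Y,Z))$ occurring in the three axioms into the form $\Phi(-,-)\cong\Psi(-,-)$ for the relevant maps. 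Once the dictionary and the three two-out-of-three cases are established, mutual inverseness is essentially free: the first correspondence is already a bijection at the level of factorization systems, and the passage recovers $\Phi(X,Y)$ and $\Psi(X,Y)$ as the middle objects of the two canonical factorizations of the resulting model structure, which is exactly how that model structure's own factorization brackets are defined.
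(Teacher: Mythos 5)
Your proposal is correct and follows essentially the same route as the paper: the same mutually inverse assignments between brackets and factorization systems (middle objects in one direction, the classes $\{\Phi(X,Y)\cong Y\}$ and $\{\Phi(X,Y)\cong X\}$ in the other), and the same dictionary matching the binary axioms to the containments $\cof\we\subseteq\cof$, $\fib\we\subseteq\fib$ and the three trinary axioms to the three cases of two-out-of-three. The step you flag as the hard part is handled in the paper at the same level of detail you propose, by rewriting the nested bracket expressions via the cancellation axioms, with the class of weak equivalences defined directly as $\{\Phi(X,Z)\cong\Psi(X,Z)\}$ so that no circular appeal to two-out-of-three is needed.
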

\begin{proof}
Given a factorization system $(\mathcal{L},\mathcal{R})$, we get a factorization bracket $\Phi$ on $\mathcal{A}$ by letting $\Phi(X,Y)$ be ``the'' (unique up to isomorphism) element of $\mathcal{A}$ which fits into a factorization $X\leq \Phi(X,Y)\leq Y$ with $X\leq \Phi(X,Y)$ in $\mathcal{L}$ and with $\Phi(X,Y)\leq Y$ in $\mathcal{R}$. 
Going the other way, i.e., beginning with a factorization bracket $\Phi$, we claim that one gets a factorization system $(\mathcal{L},\mathcal{R})$ by letting $\mathcal{L}$ be the set of all morphisms $X\leq Y$ such that $\Phi(X,Y)\cong Y$, and letting $\mathcal{R}$ be the set of all morphisms $X\leq Y$ such that $X\cong \Phi(X,Y)$. 

Checking the axioms in Definition \ref{def of fact bracket} is routine.
We conclude that factorization brackets are equivalent to factorization systems.

A model structure on $\mathcal{A}$ is given by a pair of factorization systems $(cof,fib\ we)$ and $(cof\ we, fib)$ satisfying a few axioms:
\begin{itemize}
\item $cof\ we\subseteq cof$,
\item $fib\ we\subseteq fib$,
\item and the weak equivalences---which are understood to be the maps $X\leq Z$ such that $\Phi(X,Z) = \Psi(X,Z)$, i.e., the maps $X\leq Z$ which factor as a composite $X\leq Y\leq Z$ such that $X\leq Y$ is in $cof\ we$ and $Y\leq Z$ is in $fib\ we$---satisfy the two-out-of-three axiom.
\end{itemize}
Now we compare those axioms with the axioms in the definition of a factorization bracket pair, from Definition \ref{def of fact bracket}:
\begin{itemize}
\item Axiom \eqref{fbpair binary axiom 1} is equivalent to the containment $cof\ we\subseteq cof$, while axiom \eqref{fbpair binary axiom 2} is equivalent to $fib\ we\subseteq fib$.
\item Axiom \eqref{fbpair trinary axiom 4} is equivalent to the statement that, when you factor a composite of the form $\bullet \stackrel{fib\ we}{\longrightarrow} \bullet \stackrel{cof\ we}{\longrightarrow} \bullet$
into a composite of the form $\bullet \stackrel{cof}{\longrightarrow} \bullet \stackrel{fib\ we}{\longrightarrow} \bullet$ (respectively, a composite of the form $\bullet \stackrel{cof\ we}{\longrightarrow} \bullet \stackrel{fib}{\longrightarrow} \bullet$), the remaining $cof$ map is also in $cof\ we$ (respectively, the remaining $fib$ map is also in $fib\ we$). This is, in turn, equivalent to the weak equivalences being closed under composition, which is part of the two-out-of-three axiom.
\item Axiom \eqref{fbpair trinary axiom 5} is equivalent to the condition that, given a composite of the form $\bullet \stackrel{fib\ we}{\longrightarrow} \bullet \stackrel{cof}{\longrightarrow} \bullet$ which is a weak equivalence, the map marked $cof$ is also in $cof\ we$. Similarly, axiom \eqref{fbpair trinary axiom 6} is equivalent to the condition that, given a composite of the form $\bullet \stackrel{fib}{\longrightarrow} \bullet \stackrel{cof\ we}{\longrightarrow} \bullet$ which is a weak equivalence, the map marked $fib$ is also in $fib\ we$. These two axioms along with \eqref{fbpair trinary axiom 4} are equivalent to the weak equivalences satisfying the two-out-of-three axiom.
\end{itemize}
\end{proof}

\section{Explicit examples}
\label{Explicit examples}

\subsection{The power set of $\{a,b\}$}
\label{P of ab}

Any Boolean algebra with four elements is isomorphic to the power set $P(\{a,b\})$. The following diagram represents all objects and all nonidentity morphisms in $P(\{a,b\})$ when regarded as a category:
\begin{equation}\label{diag 02300}\xymatrix{
 & \{a,b\} & \\
\{a\} \ar[ur] & & \{b\} \ar[ul] \\
 & \emptyset \ar[ur]\ar[uu]\ar[ul] & 
}\end{equation}
\begin{notation}
    It will be convenient to use a miniature version of this diagram, with appropriate labellings, 
    to concisely describe factorization systems and model structures on the category $P(\{a,b\})$. 
    To concisely describe a factorization system $(\mathcal{L}, \mathcal{R})$ on $P(\{a,b\})$, 
    we will draw a miniature version of diagram \eqref{diag 02300}, 
    with an $L$ marked over each morphism which is in $\mathcal{L}$, 
    and an $R$ marked over each morphism which is in $\mathcal{R}$. For example, the drawing 
\tikzset{
 d5/.pic={
\draw[lightgray, very thin] (0,-1) -- (1,0) -- (0,1) -- (-1,0) -- cycle; 
\draw[lightgray, very thin] (0,-1) -- (0,1);
\path (-.5,0.5) node[color=black] {$R$} ;
\path (.5,0.5) node[color=black] {$L$} ;
\path (0,0) node[color=black] {} ;
\path (-.5,-0.5) node[color=black] {$L$} ;
\path (.5,-0.5) node[color=black] {$R$};
}}
\begin{tikzpicture} [shorten <= 0.4cm,shorten >= 0.4cm,
 d5/.style={path picture={\pic at ([yshift=-0.1cm]path picture bounding box) {d5};}},
every pic/.style={scale=.2,every node/.style={scale=0.4}},
] 
\draw (0,0) node[d5,minimum size=0.8cm,color=black] {};
\end{tikzpicture}
denotes the factorization system on $P(\{a,b\})$ in which $\emptyset\rightarrow \{a\}$ and $\{b\} \rightarrow \{a,b\}$ are in $\mathcal{L}$, while $\emptyset\rightarrow \{b\}$ and $\{a\} \rightarrow \{a,b\}$ are in $\mathcal{R}$, and $\emptyset \rightarrow \{a,b\}$ is in neither $\mathcal{L}$ nor $\mathcal{R}$.
\end{notation}

With this notational convention in place, the following figure shows all ten of the factorization systems on $P(\{a,b\})$ (see also Example 1.8 in \cite{leftandright}, where these ten factorization systems are also depicted):

\begin{equation}\label{n2 fact systems}
\tikzset{
 d0/.pic={
\draw[lightgray, very thin] (0,-1) -- (1,0) -- (0,1) -- (-1,0) -- cycle; 
\draw[lightgray, very thin] (0,-1) -- (0,1);
\path (-.5,0.5) node[color=black] {$R$} ;
\path (.5,0.5) node[color=black] {$R$} ;
\path (0,0) node[color=black] {$R$} ;
\path (-.5,-0.5) node[color=black] {$R$} ;
\path (.5,-0.5) node[color=black] {$R$};
\path (0,2) node [color=black] {$0_{rtmg}^{sm}$};
}}
\tikzset{
 d1/.pic={
\draw[lightgray, very thin] (0,-1) -- (1,0) -- (0,1) -- (-1,0) -- cycle; 
\draw[lightgray, very thin] (0,-1) -- (0,1);
\path (-.5,0.5) node[color=black] {$L$} ;
\path (.5,0.5) node[color=black] {$R$} ;
\path (0,0) node[color=black] {$R$} ;
\path (-.5,-0.5) node[color=black] {$R$} ;
\path (.5,-0.5) node[color=black] {$R$};
\path (0,2) node [color=black] {$1_{rt}$};
}}
\tikzset{
 d2/.pic={
\draw[lightgray, very thin] (0,-1) -- (1,0) -- (0,1) -- (-1,0) -- cycle; 
\draw[lightgray, very thin] (0,-1) -- (0,1);
\path (-.5,0.5) node[color=black] {$R$} ;
\path (.5,0.5) node[color=black] {$L$} ;
\path (0,0) node[color=black] {$R$} ;
\path (-.5,-0.5) node[color=black] {$R$} ;
\path (.5,-0.5) node[color=black] {$R$};
\path (0,2) node [color=black] {$2_{rt}$};
}}
\tikzset{
 d3/.pic={
\draw[lightgray, very thin] (0,-1) -- (1,0) -- (0,1) -- (-1,0) -- cycle; 
\draw[lightgray, very thin] (0,-1) -- (0,1);
\path (-.5,0.5) node[color=black] {$L$} ;
\path (.5,0.5) node[color=black] {$L$} ;
\path (0,0) node[color=black] {$R$} ;
\path (-.5,-0.5) node[color=black] {$R$} ;
\path (.5,-0.5) node[color=black] {$R$};
\path (0,2) node [color=black] {$3_{rtm}$};
}}
\tikzset{
 d4/.pic={
\draw[lightgray, very thin] (0,-1) -- (1,0) -- (0,1) -- (-1,0) -- cycle; 
\draw[lightgray, very thin] (0,-1) -- (0,1);
\path (-.5,0.5) node[color=black] {$L$} ;
\path (.5,0.5) node[color=black] {$L$} ;
\path (0,0) node[color=black] {$L$} ;
\path (-.5,-0.5) node[color=black] {$R$} ;
\path (.5,-0.5) node[color=black] {$R$};
\path (0,2) node [color=black] {$4_{}^{stm}$};
}}
\tikzset{
 d5/.pic={
\draw[lightgray, very thin] (0,-1) -- (1,0) -- (0,1) -- (-1,0) -- cycle; 
\draw[lightgray, very thin] (0,-1) -- (0,1);
\path (-.5,0.5) node[color=black] {$R$} ;
\path (.5,0.5) node[color=black] {$L$} ;
\path (0,0) node[color=black] {} ;
\path (-.5,-0.5) node[color=black] {$L$} ;
\path (.5,-0.5) node[color=black] {$R$};
\path (0,2) node [color=black] {$5_{rm}^{sm}$};
}}
\tikzset{
 d6/.pic={
\draw[lightgray, very thin] (0,-1) -- (1,0) -- (0,1) -- (-1,0) -- cycle; 
\draw[lightgray, very thin] (0,-1) -- (0,1);
\path (-.5,0.5) node[color=black] {$L$} ;
\path (.5,0.5) node[color=black] {$L$} ;
\path (0,0) node[color=black] {$L$} ;
\path (-.5,-0.5) node[color=black] {$L$} ;
\path (.5,-0.5) node[color=black] {$R$};
\path (0,2) node [color=black] {$6^{st}$};
}}
\tikzset{
 d7/.pic={
\draw[lightgray, very thin] (0,-1) -- (1,0) -- (0,1) -- (-1,0) -- cycle; 
\draw[lightgray, very thin] (0,-1) -- (0,1);
\path (-.5,0.5) node[color=black] {$L$} ;
\path (.5,0.5) node[color=black] {$R$} ;
\path (0,0) node[color=black] {} ;
\path (-.5,-0.5) node[color=black] {$R$} ;
\path (.5,-0.5) node[color=black] {$L$};
\path (0,2) node [color=black] {$7_{rm}^{sm}$};
}}
\tikzset{
 d8/.pic={
\draw[lightgray, very thin] (0,-1) -- (1,0) -- (0,1) -- (-1,0) -- cycle; 
\draw[lightgray, very thin] (0,-1) -- (0,1);
\path (-.5,0.5) node[color=black] {$L$} ;
\path (.5,0.5) node[color=black] {$L$} ;
\path (0,0) node[color=black] {$L$} ;
\path (-.5,-0.5) node[color=black] {$R$} ;
\path (.5,-0.5) node[color=black] {$L$};
\path (0,2) node [color=black] {$8_{}^{st}$};
}}
\tikzset{
 d9/.pic={
\draw[lightgray, very thin] (0,-1) -- (1,0) -- (0,1) -- (-1,0) -- cycle; 
\draw[lightgray, very thin] (0,-1) -- (0,1);
\path (-.5,0.5) node[color=black] {$L$} ;
\path (.5,0.5) node[color=black] {$L$} ;
\path (0,0) node[color=black] {$L$} ;
\path (-.5,-0.5) node[color=black] {$L$} ;
\path (.5,-0.5) node[color=black] {$L$};
\path (0,2) node [color=black] {$9_{rm}^{stmg}$};
}}
\begin{tikzpicture} [shorten <= 0.4cm,shorten >= 0.4cm,
 d0/.style={path picture={\pic at ([yshift=-0.1cm]path picture bounding box) {d0};}},
 d1/.style={path picture={\pic at ([yshift=-0.1cm]path picture bounding box) {d1};}},
 d2/.style={path picture={\pic at ([yshift=-0.1cm]path picture bounding box) {d2};}},
 d3/.style={path picture={\pic at ([yshift=-0.1cm]path picture bounding box) {d3};}},
 d4/.style={path picture={\pic at ([yshift=-0.1cm]path picture bounding box) {d4};}},
 d5/.style={path picture={\pic at ([yshift=-0.1cm]path picture bounding box) {d5};}},
 d6/.style={path picture={\pic at ([yshift=-0.1cm]path picture bounding box) {d6};}},
 d7/.style={path picture={\pic at ([yshift=-0.1cm]path picture bounding box) {d7};}},
 d8/.style={path picture={\pic at ([yshift=-0.1cm]path picture bounding box) {d8};}},
 d9/.style={path picture={\pic at ([yshift=-0.1cm]path picture bounding box) {d9};}},
every pic/.style={scale=.2,every node/.style={scale=0.4}},
] 
\draw (0,3) node[d0,minimum size=0.8cm,circle,draw,color=black,fill=red!10] {};
\draw (-1,2) node[d1,minimum size=0.8cm,circle,draw,color=black,fill=blue!10] {};
\draw (1,2) node[d2,minimum size=0.8cm,circle,draw,color=black,fill=blue!10] {};
\draw (0,1) node[d3,minimum size=0.8cm,circle,draw,color=black, shading=true, left color=blue!10, right color=orange!10] {};
\draw (0,-1) node[d4,minimum size=0.8cm,circle,draw,color=black] {};
\draw (1,0) node[d5,minimum size=0.8cm,circle,draw,color=black,fill=orange!10] {};
\draw (1,-2) node[d6,minimum size=0.8cm,circle,draw,color=black] {};
\draw (-1,0) node[d7,minimum size=0.8cm,circle,draw,color=black,fill=orange!10] {};
\draw (-1,-2) node[d8,minimum size=0.8cm,circle,draw,color=black] {};
\draw (0,-3) node[d9,minimum size=0.8cm,circle,draw,color=black,fill=orange!10] {};
\draw [->] (0,3) -- (-1,2);
\draw [->] (0,3) -- (1,2);
\draw [->] (-1,2) -- (0,1);
\draw [->] (1,2) -- (0,1);
\draw [->] (-1,2) -- (-1,0);
\draw [->] (1,2) -- (1,0);
\draw [->] (0,1) -- (0,-1);
\draw [->] (-1,0) -- (-1,-2);
\draw [->] (1,0) -- (1,-2);
\draw [->] (0,-1) -- (-1,-2);
\draw [->] (0,-1) -- (1,-2);
\draw [->] (-1,-2) -- (0,-3);
\draw [->] (1,-2) -- (0,-3);
\end{tikzpicture}
\end{equation}

Figure \eqref{n2 fact systems} was obtained by using Proposition \ref{fact brackets and fact systems and model structs} and programming a computer\footnote{Our computer code for this and all later calculations in this paper, written in Sage \cite{sagemath95}, is freely available at \url{https://asalch.wayne.edu/model_structures_1.tar}. Many of the calculations should also be possible using Balchin's ninfty software \cite{balchin2025ninftysoftwarepackagehomotopical}, although not all: e.g. ninfty would certainly not have any stock functionality for finding all the matroidal model structures, marked in orange circles in \eqref{n2 model structs} for the four-element Boolean algebra, since matroidal model structures are a new construction in this paper.} to carry out a brute force calculation of all factorization brackets on $P(\{a,b\})$. Each circle in figure \eqref{n2 fact systems} represents a single factorization bracket, i.e., a single factorization system. The number at the top of each circle is simply an index number, so that we can easily refer to factorization systems on $P(\{a,b\})$ later in this section, e.g. by writing ``factorization system $5$'' to mean the factorization system 
\tikzset{
 d5/.pic={
\draw[lightgray, very thin] (0,-1) -- (1,0) -- (0,1) -- (-1,0) -- cycle; 
\draw[lightgray, very thin] (0,-1) -- (0,1);
\path (-.5,0.5) node[color=black] {$R$} ;
\path (.5,0.5) node[color=black] {$L$} ;
\path (0,0) node[color=black] {} ;
\path (-.5,-0.5) node[color=black] {$L$} ;
\path (.5,-0.5) node[color=black] {$R$};
}}
\begin{tikzpicture} [shorten <= 0.4cm,shorten >= 0.4cm,
 d5/.style={path picture={\pic at ([yshift=-0.1cm]path picture bounding box) {d5};}},
every pic/.style={scale=.2,every node/.style={scale=0.4}},
] 
\draw (0,0) node[d5,minimum size=0.8cm,color=black] {};
\end{tikzpicture}. 

The subscript $r$ next to the number indicates that the factorization system is retractile, 
in the sense of Definition \ref{retractile and sectile classes}. 
Similarly, a subscript $t$ marks each of the factorization systems which are topological (Definition \ref{topological def}).
A subscript $m$ marks each factorization system arising from a matroid with underlying set $\{a,b\}$, 
and a $g$ marks each factorization system arising from a geometry with underlying set $\{a,b\}$, as in Definition \ref{def of matroidal and geometric}. Because we think it is a useful visualization, we have also incorporated that information in the form of the background colors of the circles:
\begin{itemize}
\item white background color indicates a factorization system which is not retractile,
\item blue background color indicates a factorization system which is topological (hence also retractile), i.e., arises from a topology on $\{a,b\}$,
\item orange background color indicates a factorization system which is matroidal (hence also retractile), i.e., arises from a matroid with underlying set $\{a,b\}$,
\item mixed blue and orange background color indicates a factorization system which is both topological and matroidal,
\item and red background color indicates a factorization system which is geometric (hence also matroidal) and also topological.
\end{itemize}

We use superscripts for the dual notions: a superscript $s$ marks each factorization system which is sectile, 
while superscripts $t,m,g$ mark each factorization system whose dual arises from a topology, a matroid, or a geometry, respectively, with underlying set $\{a,b\}$. 

An arrow from factorization system $(\mathcal{L},\mathcal{R})$ to factorization system $(\mathcal{L}^{\prime},\mathcal{R}^{\prime})$ means that the latter is a {\em localization} of the former, i.e., we have a containment $\mathcal{R}^{\prime}\subseteq \mathcal{R}$. Equivalently (using Proposition \ref{fact brackets and fact systems and model structs}), in terms of factorization brackets: a factorization bracket $\Psi$ is a localization of a factorization bracket $\Phi$ iff, for every pair $X,Y$ with $X\leq Y$, we have $\Phi(\Psi(X,Y),Y) \cong \Psi(X,Y)$. 

\begin{observation}
    It is not {\em a priori} clear, but instead a consequence of the calculation, 
    that every retractile factorization system on $\{a,b\}$ is either topological or matroidal (or both), 
    and that every geometric factorization system on $\{a,b\}$ is also topological.
\end{observation}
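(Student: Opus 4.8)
The plan is to reduce the observation to a finite check on Moore collections, using the bijections already established. By \cref{All strong model structures...} (specifically Theorem \ref{idem monads are bijective to retractile fact sys} together with the correspondence between Moore collections and idempotent monads), retractile factorization systems on $\mathcal{P}(\{a,b\})$ are in bijection with Moore collections on $\{a,b\}$; under this bijection, the topological factorization systems (Definition \ref{topological def}) correspond to those Moore collections which are topologies, the matroidal ones (Definition \ref{def of matroidal and geometric}) correspond to those which are matroids, and the geometric ones correspond to those which are simple matroids. So it suffices to show that every Moore collection on $\{a,b\}$ is a topology or a matroid, and that the only simple matroid on $\{a,b\}$ is a topology.

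First I would enumerate the Moore collections on $\{a,b\}$. Every such collection contains $\{a,b\}$ and is closed under intersection; a short case analysis yields exactly seven, matching the count in \eqref{tab:strong_model_structures}. A useful simplification is that on a two-element set closure under finite union among members is automatic (the only nontrivial union is $\{a\}\cup\{b\}=\{a,b\}$, which lies in every Moore collection), so a Moore collection on $\{a,b\}$ presents a topology via its closed sets if and only if it contains $\emptyset$. This immediately splits the seven collections into the four containing $\emptyset$, which are topological, and the three not containing $\emptyset$, namely $\{\{a,b\}\}$, $\{\{a,b\},\{a\}\}$, and $\{\{a,b\},\{b\}\}$.

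For the first part of the observation I would then verify that each of these three $\emptyset$-free collections is a matroid. In each case $\Cl(\emptyset)\neq\emptyset$, so the collection has loops, and the exchange property of Definition \ref{def of matroid} is checked directly: for $\{\{a,b\}\}$ every element lies in every closure, so the hypothesis of the exchange property never holds; for $\{\{a,b\},\{a\}\}$ the element $a$ is a loop and the only element that can lie outside a closure is $b$, so the exchange condition with $x=b$ reduces to a triviality; and symmetrically for $\{\{a,b\},\{b\}\}$. Hence every retractile factorization system on $\{a,b\}$ is topological or matroidal. For the second part, a simple matroid requires $\Cl(\emptyset)=\emptyset$ and $\Cl(\{x\})=\{x\}$ for each $x$, which forces $\emptyset,\{a\},\{b\}$ to be flats and hence forces the Moore collection to be the full power set; this is the discrete topology, so the unique geometric factorization system on $\{a,b\}$ is topological.

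There is no deep obstacle here: the statement is a finite verification, and the real content is the bookkeeping of the seven Moore collections together with the exchange-property checks for the three non-topological ones. The only mildly surprising point---and the reason the observation is \emph{not a priori clear}---is that the topological and matroidal classes interleave exactly right: the topologies are precisely the $\emptyset$-containing collections (including the two Sierpi\'{n}ski-type ones, which are \emph{not} matroids), while the loop-bearing matroids are precisely the $\emptyset$-free ones, and together these two families exhaust all seven collections with no gaps. Confirming that the union is complete, rather than leaving some retractile factorization system that is neither topological nor matroidal, is the crux; once the seven collections are listed and classified, it follows by inspection.
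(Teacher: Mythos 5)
Your argument is correct, and it reaches the observation by a genuinely different route than the paper does. The paper treats this statement as a pure byproduct of the brute-force enumeration: the computer (and a hand check) lists all ten factorization brackets on $P(\{a,b\})$, each is tested for the retractile, topological, matroidal, and geometric properties, and the observation is then read off from figure \eqref{n2 fact systems} by inspection. You instead route the whole question through the bijection of \cref{All strong model structures...} and Theorem \ref{idem monads are bijective to retractile fact sys}, replacing the ten factorization systems by the seven Moore collections on $\{a,b\}$ and classifying those by hand. The two structural points you isolate --- that on a two-element set closure under finite unions is automatic, so a Moore collection is a topology exactly when it contains $\emptyset$, and that the three $\emptyset$-free collections are precisely the loop-bearing matroids, with the exchange property holding vacuously or trivially in each case --- are exactly right, and your identification of the unique simple matroid with the full power set (hence the discrete topology) disposes of the geometric claim. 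Your counts (seven Moore collections, four topologies, five matroids, two collections both topological and matroidal, one geometric) all agree with the paper's table \eqref{tab:strong_model_structures} and with the subscripts in figure \eqref{n2 fact systems}. What your approach buys is a computer-free, conceptually organized verification that also explains \emph{why} the two classes interleave to cover everything; what the paper's approach buys is uniformity, since the same brute-force machinery simultaneously handles the non-retractile factorization systems, the model structures, and the $n=3$ case, where a hand classification of the $61$ Moore collections would be far less pleasant.
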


\begin{observation}
    There is a great deal of apparent symmetry in figure \eqref{n2 fact systems}. 
    In general, if we draw the quiver of factorization systems on the Boolean algebra $P(\{ a_1, \dots ,a_n\})$, 
    the symmetric group $\Sigma_n$ acts on the quiver by permuting the members of $P(\{ a_1, \dots ,a_n\})$, 
    but the cyclic group $C_2$ also acts on the quiver by the action on $P(\{ a_1, \dots ,a_n\})$ 
    which sends each subset of $\{a_1, \dots ,a_n\}$ to its complement. 
    These actions commute with each other. The former action preserves localizations, 
    and is responsible for the evident left-to-right symmetry in figure \eqref{n2 fact systems}. 
    On the other hand, the latter action reverses localizations, 
    and is responsible for the evident top-to-bottom symmetry in figure \eqref{n2 fact systems}.
\end{observation}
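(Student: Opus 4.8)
The plan is to exhibit both symmetries as instances of one mechanism: an order-preserving or order-reversing bijection of the preorder $P(X)$ induces an operation on the set of factorization systems, and this operation respects or reverses the localization relation according to whether the bijection preserves or reverses the order. Throughout I write $X=\{a_1,\dots,a_n\}$, and I work directly with factorization systems $(\mathcal{L},\mathcal{R})$, recalling that by the convention in the excerpt, an arrow $(\mathcal{L},\mathcal{R})\to(\mathcal{L}',\mathcal{R}')$ means $\mathcal{R}'\subseteq\mathcal{R}$.

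First I would record the two relevant (anti)automorphisms of $P(X)$. Each $\sigma\in\Sigma_n$ acts by $\sigma\cdot S=\{\sigma(a):a\in S\}$; since $S\subseteq T$ iff $\sigma\cdot S\subseteq\sigma\cdot T$, this is an order-preserving automorphism, i.e.\ an isomorphism of categories $P(X)\to P(X)$. Complementation $c(S)=X\setminus S$ satisfies $S\subseteq T$ iff $c(T)\subseteq c(S)$, so it is an order-reversing bijection, i.e.\ an isomorphism $P(X)\to P(X)^{\op}$. Because $\sigma$ permutes elements while $c$ complements, $\sigma\bigl(c(S)\bigr)=X\setminus(\sigma\cdot S)=c(\sigma\cdot S)$, so the two maps commute as self-maps of $P(X)$; with $c^2=\mathrm{id}$ they generate a copy of $\Sigma_n\times C_2$.

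Next I would define the induced operations on factorization systems. For the order-preserving automorphism $\sigma$, I send $(\mathcal{L},\mathcal{R})$ to $(\sigma\mathcal{L},\sigma\mathcal{R})$, where $\sigma\mathcal{L}$ is the image of $\mathcal{L}$ under the bijection $\sigma$ on morphisms; this is again a factorization system, since $\sigma$ carries factorizations to factorizations and lifting problems to lifting problems bijectively. For the order-reversing $c$ the key point --- and the main subtlety of the whole argument --- is that an anti-automorphism swaps the two lifting properties, so the left class in $P(X)$ becomes the right class in $P(X)^{\op}$. Concretely I would send $(\mathcal{L},\mathcal{R})$ to $(\mathcal{L}^c,\mathcal{R}^c)$ with $\mathcal{L}^c=\{f:c(f)\in\mathcal{R}\}$ and $\mathcal{R}^c=\{f:c(f)\in\mathcal{L}\}$, where $c(f)$ is the morphism $c(T)\leq c(S)$ obtained from $f\colon S\leq T$. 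I would then verify that $(\mathcal{L}^c,\mathcal{R}^c)$ is a factorization system by transporting an $(\mathcal{L},\mathcal{R})$-factorization of $c(f)$ back through $c$ and checking the swapped orthogonality; here one must watch the reversal of arrow directions, but it is otherwise routine.

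Finally I would analyze the effect on localizations. Since $\sigma$ is bijective on morphisms, $\sigma\mathcal{R}'\subseteq\sigma\mathcal{R}$ iff $\mathcal{R}'\subseteq\mathcal{R}$, so $\sigma$ preserves the localization relation; this accounts for the left--right symmetry of Figure \eqref{n2 fact systems}. For $c$ I would use the standard fact that in a factorization system the classes determine each other by orthogonality, so that $\mathcal{R}'\subseteq\mathcal{R}$ is equivalent to $\mathcal{L}\subseteq\mathcal{L}'$; combined with the definition of $\mathcal{R}^c$ this gives $\mathcal{R}^c\subseteq(\mathcal{R}')^c$ iff $\mathcal{L}\subseteq\mathcal{L}'$ iff $\mathcal{R}'\subseteq\mathcal{R}$. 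Hence $c$ sends an arrow $(\mathcal{L},\mathcal{R})\to(\mathcal{L}',\mathcal{R}')$ to the reversed arrow $c\cdot(\mathcal{L}',\mathcal{R}')\to c\cdot(\mathcal{L},\mathcal{R})$, which accounts for the top--bottom symmetry. Commutativity of the two induced actions is then immediate from the commutativity $\sigma c=c\sigma$ together with the functoriality of both constructions in the underlying (anti)automorphism. The only genuine obstacle is the bookkeeping for $c$ --- confirming that complementation really does yield a factorization system with left and right classes interchanged --- after which every localization statement is purely formal.
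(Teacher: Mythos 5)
Your proof is correct, and it is precisely the transport-of-structure argument that the paper's Observation implicitly relies on (the paper states this without proof, treating it as routine): order-preserving automorphisms carry factorization systems to factorization systems preserving containment of right classes, while the complementation anti-automorphism swaps left and right classes and, via the standard antitone correspondence $\mathcal{R}'\subseteq\mathcal{R}\iff\mathcal{L}\subseteq\mathcal{L}'$, reverses localizations. Your handling of the one genuine subtlety — that an order-reversing bijection must interchange $\mathcal{L}$ and $\mathcal{R}$ — is exactly right, so there is nothing to add.
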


With figure \eqref{n2 fact systems} in hand, it is not difficult to find all model structures on $P(\{a,b\})$---or, for that matter, any finite partially-ordered set---by the following algorithm:
\begin{enumerate}
\item List all the pairs $(\Phi,\Psi)$ of factorization brackets on $P(\{a,b\})$ such that $\Psi$ is a localization of $\Phi$. Inspection of figure \eqref{n2 fact systems} reveals that there are precisely 
$44$ such pairs.
\item For each such pair $(\Phi,\Psi)$, check whether it satisfies the axioms in the definition of a factorization bracket pair in Definition \ref{def of fact bracket}. Proposition \ref{fact brackets and fact systems and model structs} then tells us that each such pair corresponds to a model structure on $P(\{a,b\})$, and vice versa.
\end{enumerate}
We carried out this process twice: once by computer, using the Sage script we wrote which is available at \url{https://asalch.wayne.edu/model_structures_1.tar}, and once entirely by hand, to be certain that we arrived at the same results. The algorithm used by our computer software\footnote{Our software implementing this algorithm takes only a few seconds on the power set of a two-element set. On the power set of a three-element set, it took about a day and a half on Wayne State University's high-performance computing grid. On the power set of a four-element set, this algorithm is too inefficient to run on any computer whatsoever. The difficulty is entirely in the first step. It is the most computationally costly step by an enormous margin.} is quite na\"{i}ve: 
\begin{enumerate}
\item Calculate the set of factorization brackets (Definition \ref{def of fact bracket}) on $P(\{a,b\})$, by looping over all possible functions $P(\{a,b\}) \rightarrow P(\{a,b\})\times P(\{a,b\})$, checking whether each given such function satisfies the axioms for a factorization bracket, and if is does, appending it to a list. This yields a list of all factorization brackets on $P(\{a,b\})$.
\item Make a list of ordered pairs of members of the list of factorization brackets on $P(\{a,b\})$, and then loop over that list of ordered pairs, checking whether each such ordered pair satisfies the axioms for a factorization bracket, and if it does, appending it to another list. This yields a list of all factorization bracket pairs on $P(\{a,b\})$, i.e., by Proposition \ref{fact brackets and fact systems and model structs}, a list of all model structures on $P(\{a,b\})$.
\item Loop over the list of factorization bracket pairs, checking whether each one satisfies the various properties of interest: strong, topological, matroidal, etc.
\item Make a list of ordered pairs of members of the list of factorization bracket pairs, and then for each pair on that list, check whether the second pair is a Bousfield localization of the first. Carry out the same check for Bousfield colocalizations.
\end{enumerate}
The result of the calculation is that there are precisely $23$ model structures on $P(\{a,b\})$, of which $17$ are strong, $9$ are topological, $11$ are matroidal, and one is geometric. They are as pictured earlier in the paper, in \eqref{n2 model structs}.

\subsection{The power set of $\{a\}$}
\label{P of a}

With the conventions established in the previous subsection, it is an easy matter to work out the quiver of factorization systems on $P(\{a\})$. There are only two factorization systems:
\begin{equation*}
\tikzset{
 d0/.pic={
\draw[lightgray, very thin] (0,-1) -- (0,1); 
\path (0,0) node[color=black] {$R$} ;
\path (0,2) node [color=black] {$0_{rtmg}^{sm}$};
}}
\tikzset{
 d1/.pic={
\draw[lightgray, very thin] (0,-1) -- (0,1); 
\path (0,0) node[color=black] {$L$} ;
\path (0,2) node [color=black] {$1_{rm}^{stmg}$};
}}
\begin{tikzpicture} [shorten <= 0.4cm,shorten >= 0.4cm,
 d0/.style={path picture={\pic at ([yshift=-0.1cm]path picture bounding box) {d0};}},
 d1/.style={path picture={\pic at ([yshift=-0.1cm]path picture bounding box) {d1};}},
every pic/.style={scale=.2,every node/.style={scale=0.4}},
] 
\draw (0,0) node[d0,minimum size=0.8cm,circle,draw,color=black,fill=red!10] {};
\draw (1,0) node[d1,minimum size=0.8cm,circle,draw,color=black,fill=orange!10] {};
\draw [->] (0,0) -- (1,0);
\end{tikzpicture}
\end{equation*}
The two factorization systems on $P(\{a\})$ are both retractile and sectile.
There are also only three model structures on $P(\{a\})$:
\begin{equation*}
\tikzset{
 d00/.pic={
\draw[lightgray, very thin] (0,-1) -- (0,1); 
\draw[green] (0,-1) node[circle,minimum size=0cm,fill=green!40] {};
\path (0,0) node[color=black] {$fw$} ;
\path (0,2) node [color=black] {0,0};
}}
\tikzset{
 d01/.pic={
\draw[lightgray, very thin] (0,-1) -- (0,1); 
\draw[green] (0,-1) node[circle,minimum size=0cm,fill=green!40] {};
\draw[green] (0,1) node[circle,minimum size=0cm,fill=green!40] {};
\path (0,0) node[color=black] {$cf$} ;
\path (0,2) node [color=black] {0,1};
}}
\tikzset{
 d11/.pic={
\draw[lightgray, very thin] (0,-1) -- (0,1); 
\draw[green] (0,1) node[circle,minimum size=0cm,fill=green!40] {};
\path (0,0) node[color=black] {$cw$} ;
\path (0,2) node [color=black] {1,1};
}}
\begin{tikzpicture} [shorten <= 0.4cm,shorten >= 0.4cm,
 d00/.style={path picture={\pic at ([yshift=-0.1cm]path picture bounding box) {d00};}},
 d01/.style={path picture={\pic at ([yshift=-0.1cm]path picture bounding box) {d01};}},
 d11/.style={path picture={\pic at ([yshift=-0.1cm]path picture bounding box) {d11};}},
every pic/.style={scale=.2,every node/.style={scale=0.4}},
] 
\draw ( 0,0) node[d01,minimum size=0.8cm,circle,draw,color=red,fill=red!10] {};
\draw ( 2,0) node[d11,minimum size=0.8cm,circle,draw,color=orange,fill=orange!10] {};
\draw (-2,0) node[d00,minimum size=0.8cm,circle,draw,color=orange,fill=orange!10] {};
\draw [->,dashed,color=black] ( 0,0) -- (-2,0);
\draw [->,color=black] ( 0,0) -- ( 2,0);
\end{tikzpicture}
\end{equation*}
All three model structures on $P(\{a\})$ are strong in the sense of Definition \ref{def of strong model structures}.

\subsection{The power set of $\{a,b,c\}$}
\label{P3}

Using the same algorithm described in \cref{P of ab}, we calculate that there are 450 factorization systems and 1026 model structures on $P(\{a,b,c\})$. In this subsection we go into some detail about the set of all such factorization systems and the set of all such model structures, since to our knowledge this is, to date, the most elaborate worked-out example of its kind, and we think that having some record of various structural features in such a nontrivial case is worth having in the literature. 

The factorization systems on $P(\{a,b,c\})$ are as follows:

\begin{equation}\label{n3factorizationsystemsquiver}
\includegraphics{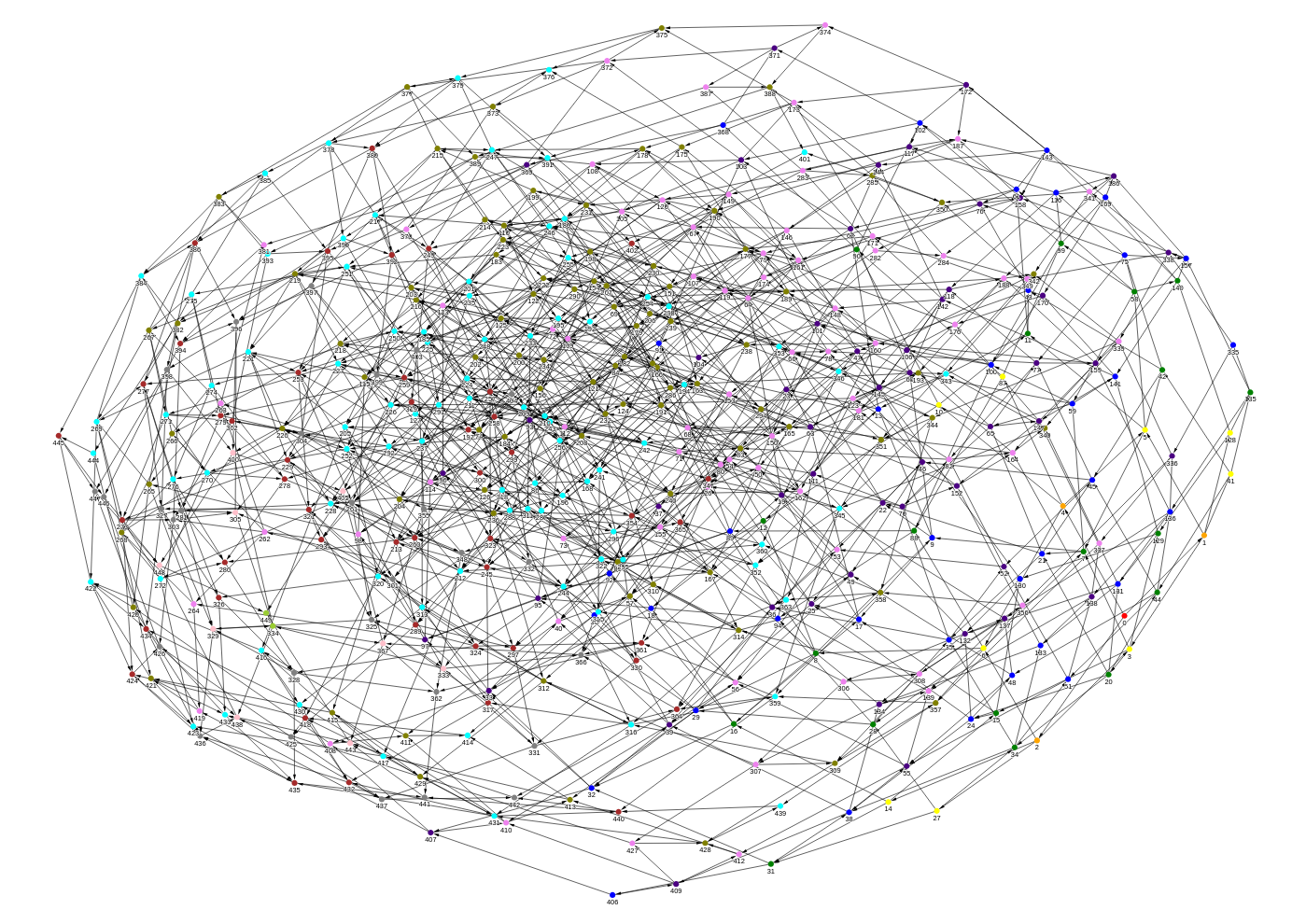}
\end{equation}

Due to limitations of page space, diagram \eqref{n3factorizationsystemsquiver} is unavoidably almost impossible to read. Hence, we describe verbally some of the important structure of diagram \eqref{n3factorizationsystemsquiver}. The factorization systems are colored by distance from the ``discrete'' factorization system, i.e., the factorization system $(\mathcal{L},\mathcal{R})$ in which $\mathcal{R}$ consists of all maps, and $\mathcal{L}$ consists of the isomorphisms. Let us say that a factorization system has {\em distance $n$ from the discrete model structure} if it is obtainable from the discrete model structure by a sequence of $n$ nontrivial localizations, but not by a sequence of $n+1$ nontrivial localizations.
Then the coloring is as follows:
\begin{itemize}
\item The discrete factorization system is colored red. 
\item The factorization systems of distance $1$ from the discrete model structure are colored orange. There are three of these. 
\item The factorization systems of distance $2$ from the discrete model structure are colored yellow. There are nine of these. 
\item This sequence proceeds until we reach distance $12$ from the discrete model structure, after which there are no more factorization systems. 
\end{itemize}
We tabulate the color codes, and the number of factorization systems of each color:

\begin{tabular}{|lll|}\hline
distance from disc. fact. sys. & color & number of fact. systems \\
\hline
0 & red & 1 \\
1 & orange & 3 \\
2 & yellow & 9 \\
3 & green & 19 \\
4 & blue & 36 \\
5 & indigo & 51\\
6 & violet & 70\\
7 & olive & 90\\
8 & cyan & 86\\
9 & brown & 46\\
10 & gray & 28\\
11 & pink & 9\\
12 & yellow-green & 2\\
\hline
\end{tabular}

Now for the model structures: of the 1026 model structures on $P(\{a,b,c\})$, 765 are in the same connected component as the discrete model structure. Those 765 model structures are as follows:

\begin{equation}\label{n3modelstructsquiver}
\includegraphics{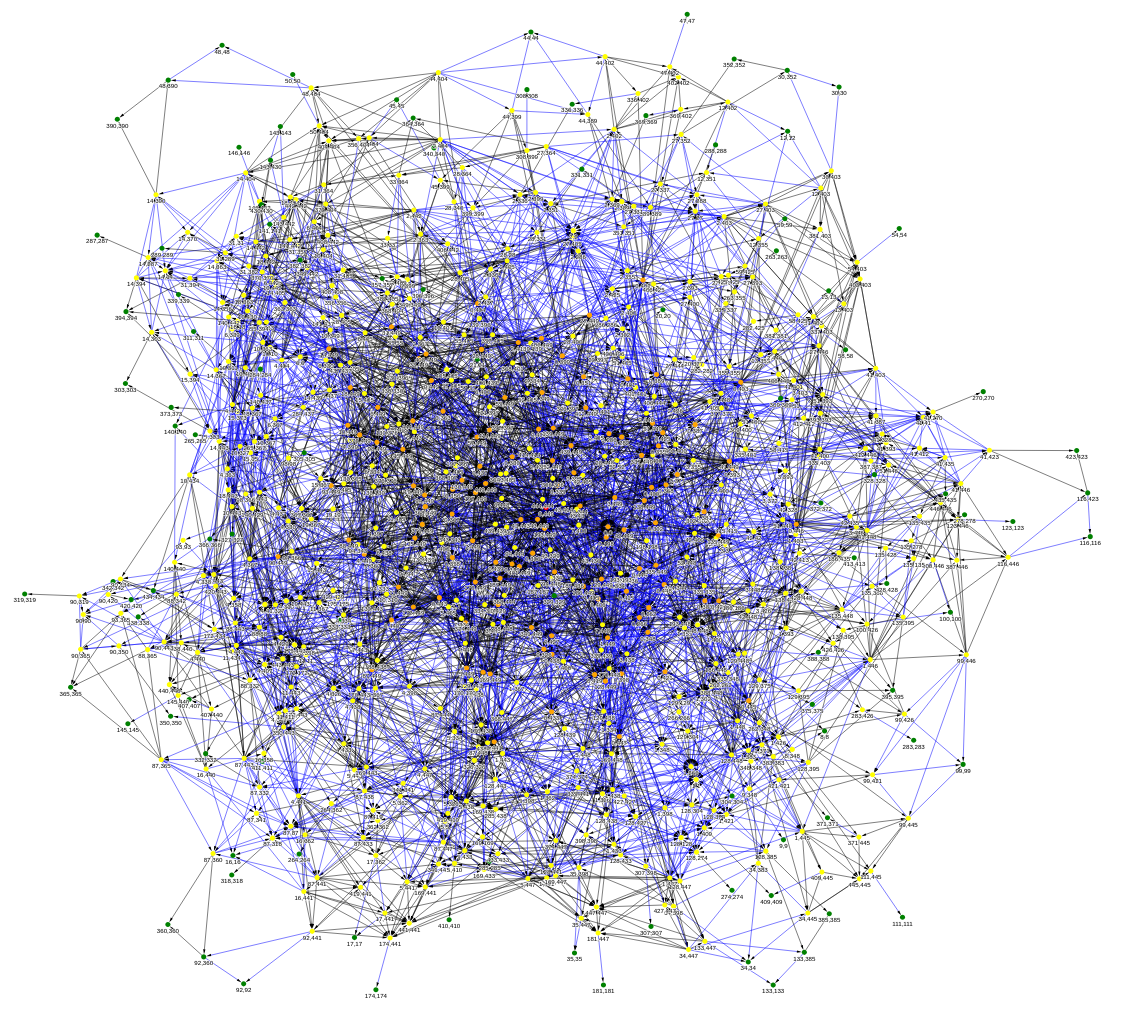}
\end{equation}

Again, since page space limitations unavoidably make diagram \eqref{n3modelstructsquiver} nearly unreadable, 
we give a verbal description of its most important features.
\begin{itemize}
    \item The discrete model structure is colored red.
    \item There are 120 model structures of distance 1, colored orange,
          from the discrete model structure.
    \item There are 542 model structures of distance 2, colored yellow, 
          from the discrete model structure.
    \item There are 102 model structures of distance 3, colored green, 
          from the discrete model structure.
\end{itemize}
To be clear, when we say that a model structure is ``of distance $n$ from the discrete model structure,'' we mean that one can arrive at that model structure from the discrete model structure by an alternating sequence, of length $n$, of Bousfield localizations and colocalizations. Not every model structure on $P(\{a,b\})$ has a well-defined distance from the discrete model structure, since not every model structure is obtainable from the discrete model structure by a sequence of Bousfield localizations and colocalizations. By {\em the main component,} we will mean the set of model structures with a well-defined distance from the discrete model structure. 

\begin{observation}
    It turns out that every model structure on $P(\{a,b\})$ which has a localization or colocalization in the main component is itself already in the main component. Perhaps there is some {\em a priori} argument one can make for this fact; we know it only as a consequence of calculating all 1026 model structures and the relationships of localization and colocalization between them. We also emphasize that the quiver of model structures on $P(\{a,b,c\})$ is not connected: one cannot get every model structure on $P(\{a,b,c\})$ by repeated localization and colocalization, starting from the discrete model structure. This stands in contrast to the situation for $P(\{a,b\})$ and $P(\{a\})$, where, as a consequence of the calculations in \cref{P of ab} and \cref{P of a}, we see that one {\em is} able to arrive at any model structure by repeated localization and colocalization, starting from the discrete model structure.  
\end{observation}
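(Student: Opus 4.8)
The plan is to first recast the observation as a statement about directed reachability in the Bousfield quiver, and then to attack that statement through the combinatorial model of factorization bracket pairs from Proposition \ref{fact brackets and fact systems and model structs}.

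First I would reformulate. Because every Bousfield localization and every Bousfield colocalization strictly enlarges the class of weak equivalences (see the footnote defining these notions), all arrows of the Bousfield quiver point in the direction of increasing weak equivalences, and the discrete model structure is the unique source, its weak equivalences being exactly the isomorphisms. Let $R$ denote the main component, i.e. the set of model structures forward-reachable from the discrete one. If $\mathcal{M}\in R$ and $\mathcal{M}'$ is a localization or colocalization of $\mathcal{M}$, then appending one step to a reaching chain shows $\mathcal{M}'\in R$, so no arrow leaves $R$. The observation is precisely the complementary assertion that no arrow enters $R$ from outside. Together these say that $R$ meets its complement in no edge at all, hence $R$ is a union of undirected connected components; since it contains the discrete model structure, the observation is equivalent to the clean statement that $R$ equals the entire undirected connected component of the discrete model structure. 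In short, the goal is to show that whenever a model structure is joined to the discrete one by an unoriented chain of (co)localizations, it is in fact joined by an oriented one.

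Next I would translate into the language of factorization bracket pairs. By Proposition \ref{fact brackets and fact systems and model structs}, a model structure is a pair $(\Phi,\Psi)$ of factorization systems in which $\Psi$ is a localization of $\Phi$ and which satisfies the axioms \eqref{fbpair binary axiom 1}--\eqref{fbpair trinary axiom 6}; here $\Phi$ carries (acyclic cofibration, fibration) and $\Psi$ carries (cofibration, acyclic fibration). Since the cofibrations form the left class of $\Psi$ and the acyclic fibrations are determined as the maps with the right lifting property against the cofibrations, a Bousfield localization fixes $\Psi$ and advances $\Phi$ to a localization of itself; dually, a Bousfield colocalization fixes $\Phi$ and retreats $\Psi$ to a colocalization of itself. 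The discrete model structure is the pair $(\Phi_{\mathrm{src}},\Psi_{\mathrm{snk}})$, where $\Phi_{\mathrm{src}}$ has right class all of $\Mor(\mathcal{A})$ and $\Psi_{\mathrm{snk}}$ has left class all of $\Mor(\mathcal{A})$ (for $P(\{a,b\})$ this is the pair $0,9$). Thus $R$ is the set of valid pairs obtainable from $(\Phi_{\mathrm{src}},\Psi_{\mathrm{snk}})$ by alternately advancing the first coordinate forward and retreating the second coordinate backward through the quiver \eqref{n3factorizationsystemsquiver} of factorization systems.

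In these terms the hard case (the localization case; the colocalization case is symmetric) is the following: given a reachable pair $(\Phi',\Psi)$ and a valid pair $(\Phi,\Psi)$ with $\Phi\le\Phi'$ in the factorization-system order, show that $(\Phi,\Psi)$ is reachable. The natural strategy is to take a chain reaching $(\Phi',\Psi)$, truncate the advancement of the first coordinate at $\Phi$, and still perform all of the second-coordinate retreats, which would land exactly at $(\Phi,\Psi)$. Whether this succeeds turns on whether the localization and colocalization moves can be reordered without ever leaving the region of valid factorization bracket pairs, and I expect this reordering, or ``staircase-convexity,'' lemma to be the main obstacle: validity is governed by the two-out-of-three axioms \eqref{fbpair trinary axiom 4}--\eqref{fbpair trinary axiom 6}, which are not manifestly monotone in $(\Phi,\Psi)$, so it is not clear that the intermediate pairs encountered after reordering are themselves model structures. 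Absent such a lemma, the statement is still established for $P(\{a,b,c\})$ by the concrete route underlying \cref{P3}: enumerate all $1026$ factorization bracket pairs, compute every localization and colocalization edge, perform a graph search from the discrete model structure to determine $R$, and verify directly that every edge incident to $R$ has both endpoints in $R$. The value of the structural reformulation is to isolate this single convexity lemma as the one ingredient that would upgrade the finite computation to an argument independent of the size of the underlying set.
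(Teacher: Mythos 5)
Your proposal is correct and, in the end, rests on exactly the same verification as the paper: the observation is established only by the exhaustive computer enumeration of all $1026$ factorization bracket pairs and the localization/colocalization edges between them, which is precisely the fallback you describe. Your reformulation (the main component $R$ equals the undirected connected component of the discrete model structure) and the isolated ``staircase-convexity'' reordering lemma are a sensible sketch of the \emph{a priori} argument the paper explicitly says it lacks, but since you correctly flag that lemma as unproven (and the claim that the discrete model structure is the \emph{unique} source is false, e.g.\ the isolated model structures are vacuously sources, though nothing in your argument depends on it), your route does not go beyond the paper's.
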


The rest of this subsection consists of further small observations about model structures on $P(\{a,b,c\})$, some of which hint at possible generalizations to model structures on larger Boolean algebras.

Aside from the 765 model structures in the main component, there are also 21 connected components of the form
\begin{equation}\label{component 2}
\begin{tikzpicture} [shorten <= 0.4cm,shorten >= 0.4cm,
every pic/.style={scale=.2,every node/.style={scale=0.4}},
] 
\draw ( 0,0) node[minimum size=0.8cm,circle,draw,color=black] {};
\draw ( 2,0) node[minimum size=0.8cm,circle,draw,color=black] {};
\draw (-2,0) node[minimum size=0.8cm,circle,draw,color=black] {};
\draw [->,dashed,color=black] ( 0,0) -- (-2,0);
\draw [->,color=black] ( 0,0) -- ( 2,0);
\end{tikzpicture}
\end{equation}
The remaining 198 model structures on $P(\{a,b\})$ are isolated, i.e., each of the remaining 198 model structures is not a localization or colocalization of any other model structure, and admits no nontrivial localizations or colocalizations.

Curiously, none of the isolated model structures are strong, and furthermore, neither are any of the model structures in the 21 connected components of the form \eqref{component 2}.
That is, every strong model structure on $P(\{a,b,c\})$ can be reached from the discrete model structure by iterated Bousfield localization and colocalization. 

Among the 765 model structures on $P(\{a,b,c\})$ in the main component, 
\begin{itemize}
\item 84 are topological (hence also strong),
\item 50 are matroidal (hence also strong)
\item 4 are geometric (hence also matroidal and strong), 
\item 252 are strong but not topological and not matroidal (hence also not geometric),
\item and 388 are not strong. 
\end{itemize}
These numbers do not add up to 765 because there is some overlap between them. 
In particular, 9 model structures are topological and matroidal. 
Precisely one of those---the discrete model structure---is also geometric. 

As observed above, all model structures on $P(\{a,b,c\})$ in the main component 
are of distance $\leq 3$ from the discrete model structure. 
Curiously, the strong model structures are all of distance $\leq 2$ from 
the discrete model structure, and the model structures which are both topological 
and matroidal are all of distance $\leq 1$ from the discrete model structure.

\def\cprime{$'$} \def\cprime{$'$} \def\cprime{$'$} \def\cprime{$'$}

\end{document}